\date{\today}
\newtheorem{theorem}{Theorem}[section]
\newtheorem{proposition}[theorem]{Proposition}
\theoremstyle{definition}
\newtheorem{definition}[theorem]{Definition}
\newtheorem{example}[theorem]{Example}
\theoremstyle{remark}
\newtheorem{remark}[theorem]{Remark}
\newcommand{\ot}{\otimes}
\newcommand{\co}{\circ}
\begin{document}

\begin{center}

{\huge{\bf Weak Hopf Quasigroups}}

\end{center}

\ \\
\begin{center}
{\bf J.N. Alonso \'Alvarez$^{1}$, J.M. Fern\'andez Vilaboa$^{2}$, R.
Gonz\'{a}lez Rodr\'{\i}guez$^{3}$}
\end{center}

\ \\
\hspace{-0,5cm}$^{1}$ Departamento de Matem\'{a}ticas, Universidad
de Vigo, Campus Universitario Lagoas-Marcosende, E-36280 Vigo, Spain
(e-mail: jnalonso@ uvigo.es)
\ \\
\hspace{-0,5cm}$^{2}$ Departamento de \'Alxebra, Universidad de
Santiago de Compostela.  E-15771 Santiago de Compostela, Spain
(e-mail: josemanuel.fernandez@usc.es)
\ \\
\hspace{-0,5cm}$^{3}$ Departamento de Matem\'{a}tica Aplicada II,
Universidad de Vigo, Campus Universitario Lagoas-Marcosende, E-36310
Vigo, Spain (e-mail: rgon@dma.uvigo.es)
\ \\

{\bf Abstract} In this paper we introduce the notion of weak Hopf quasigroup 
  as a generalization of weak Hopf
algebras and Hopf quasigroups. We obtain its main properties and we prove the fundamental theorem of Hopf modules for these algebraic structures.

\vspace{0.5cm}

{\bf Keywords.} Weak Hopf
algebra, Hopf quasigroup, bigroupoid, Hopf module.

{\bf MSC 2010:} 18D10, 16T05, 17A30, 20N05.

\section{introduction}

The notion of Hopf algebra and its generalizations appeared  as useful tools 
in relation with many branch of mathematics such that algebraic geometry, number theory, 
Lie theory, Galois theory, quantum group theory and so on.  A common principle to obtain generalizations of the original notion of Hopf algebra is to weak some of  axioms of its definition.  For example, if one does not force the coalgebra structure to respect the unit of the algebra structure, one is lead to weak Hopf algebras. In a different way, the weakening of the associativity leads to Hopf quasigroups and  quasi-Hopf  algebras. 

Weak Hopf algebras (or quantum groupoids in the
terminology of Nikshych and Vainerman \cite{NV}) were introduced
by B\"{o}hm, Nill and Szlach\'anyi \cite{bohm} as a new
generalization of Hopf algebras and groupoid algebras. A weak Hopf
algebra $H$ in a braided monoidal category \cite{IND} is an object that has
both, monoid and comonoid structure, with some relations between
them. The main difference with other Hopf algebraic constructions is that weak
Hopf algebras are coassociative but the coproduct is not required
to preserve the unit, equivalently, the counit is
not a monoid morphism. Some motivations to study weak Hopf
algebras come from the following facts: firstly, as group algebras
and their duals are the natural examples of Hopf algebras,
groupoid algebras and their duals provide examples of weak Hopf
algebras and, secondly, these algebraic structures have a
remarkable connection with the theory of algebra extensions,
important applications in the study of dynamical twists of Hopf
algebras and a deep link with quantum field theories and operator
algebras \cite{NV}, as well as they are useful tools in the
study of fusion categories in characteristic zero 
\cite{ENO}.  Moreover, Hayashi's face algebras
(see \cite{Ha}) are particular instances of weak Hopf algebras,
whose counital subalgebras are
commutative, and  Yamanouchi's
generalized Kac algebras \cite{Ya} are exactly
$C^{\ast}$-weak Hopf algebras with involutive antipode.

On the other hand, Hopf quasigroups are a generalization of Hopf algebras in the context of non associative algebra.
 Like in the quasi-Hopf setting, Hopf quasigroups are not
associative but the lack of this property is compensated by some
axioms involving the antipode. The concept of Hopf quasigroup is a
particular instance of the notion of unital coassociative
$H$-bialgebra introduced in \cite{PI2}.
It includes the example
of an enveloping algebra  of a Malcev algebra (see
\cite{Majidesfera} and \cite{PIS}) when the base ring has characteristic not equal to $2$ nor $3$, and in this sense Hopf quasigroups extend the notion of Hopf algebra in a parallel way that Malcev algebras extend the one of Lie algebra. On the other hand, it also contains as an example the notion of quasigroup algebra 
of an I.P. loop. Therefore, Hopf quasigroups unify I.P. loops and Malcev
algebras in the same way that Hopf algebras unify groups and Lie
algebras. Actually, Hopf quasigroups in a category of vector spaces were introduced
 by Klim and Majid in \cite{Majidesfera} in order to
understand the structure and relevant properties of the algebraic
$7$-sphere.

The main purposes of this paper are to introduce the notion of weak Hopf quasigroup as a new Hopf algebra generalization that  encompass weak Hopf algebras and Hopf quasigroups and to prove that the more relevant properties of these algebraic structures can be obtained under a unified approach, that is, we show that  the fundamental assertions proved in \cite{bohm} and \cite{IND} about weak Hopf algebras and in \cite{Majidesfera} for Hopf quasigroups can be obtained in this new setting. Also, we construct  a  family of examples working with bigroupoids, i.e. bicategories where every $1$-cell is an equivalence and every $2$-cell is an isomorphism. The organization of the paper is the following. After this introduction, in Section 2 we introduce weak  Hopf quasigroups and we explain in detail how the first non-trivial examples of this algebraic structures can be obtained considering bigroupoids. In Section 3 we discuss the consequences of the definition of weak  Hopf quasigroups obtaining the first relevant properties of this objects.  Finally, Section 4 is devoted to  prove the fundamental theorem of Hopf modules associated to a weak  Hopf quasigroups.

\section{Definitions and examples}

Throughout this paper $\mathcal C$ denotes a
strict  monoidal category with tensor product $\ot$
and unit object $K$. For each object $M$ in  $ {\mathcal
C}$, we denote the identity morphism by $id_{M}:M\rightarrow M$ and, for
simplicity of notation, given objects $M$, $N$, $P$ in ${\mathcal
C}$ and a morphism $f:M\rightarrow N$, we write $P\ot f$ for
$id_{P}\ot f$ and $f \ot P$ for $f\ot id_{P}$.

From now on we assume that ${\mathcal C}$  admits
split idempotents, i.e. for every morphism
$\nabla_{Y}:Y\rightarrow Y$ such that $\nabla_{Y}=\nabla_{Y}\co
\nabla_{Y}$ there exist an object $Z$ and morphisms
$i_{Y}:Z\rightarrow Y$ and $p_{Y}:Y\rightarrow Z$ such that
$\nabla_{Y}=i_{Y}\co p_{Y}$ and $p_{Y}\co i_{Y} =id_{Z}$. There is
no loss of generality in assuming that ${\mathcal
C}$ admits split idempotents, taking into account that, for a given category
${\mathcal C}$, there exists an universal embedding ${\mathcal
C}\rightarrow \hat{\mathcal C}$ such that $\hat{\mathcal C}$ admits
split idempotents, as was proved in  \cite{Karoubi}.

Also we assume that ${\mathcal C}$ is braided, that is:  for all $M$ and $N$ objects in ${\mathcal C}$,
there is a natural isomorphism $c_{M, N}:M\ot N\rightarrow N\ot M$,
called the braiding, satisfying the Hexagon Axiom (see \cite{JS}
for generalities). If the braiding satisfies $c_{N,M}\co
c_{M,N}=id_{M\ot N}$, the category ${\mathcal C}$ will be called
symmetric.

\begin{definition}
By a unital  magma in ${\mathcal C}$ we understand a triple
$A=(A, \eta_{A}, \mu_{A})$ where $A$ is an object in ${\mathcal C}$
and $\eta_{A}:K\rightarrow A$ (unit), $\mu_{A}:A\otimes A
\rightarrow A$ (product) are morphisms in ${\mathcal C}$ such that
$\mu_{A}\circ (A\otimes \eta_{A})=id_{A}=\mu_{A}\circ
(\eta_{A}\otimes A)$. If $\mu_{A}$ is associative, that is,
$\mu_{A}\circ (A\otimes \mu_{A})=\mu_{A}\circ (\mu_{A}\otimes A)$,
the unital magma will be called a monoid in ${\mathcal C}$.   Given two unital magmas
(monoids) $A= (A, \eta_{A}, \mu_{A})$ and $B=(B, \eta_{B},
\mu_{B})$, $f:A\rightarrow B$ is a morphism of unital magmas (monoids) 
if $\mu_{B}\circ (f\otimes f)=f\circ \mu_{A}$ and $
f\circ \eta_{A}= \eta_{B}$. 

By duality, a counital comagma in
${\mathcal C}$ is a triple ${D} = (D, \varepsilon_{D},
\delta_{D})$ where $D$ is an object in ${\mathcal C}$ and
$\varepsilon_{D}: D\rightarrow K$ (counit), $\delta_{D}:D\rightarrow
D\otimes D$ (coproduct) are morphisms in ${\mathcal C}$ such that
$(\varepsilon_{D}\otimes D)\circ \delta_{D}= id_{D}=(D\otimes
\varepsilon_{D})\circ \delta_{D}$. If $\delta_{D}$ is coassociative,
that is, $(\delta_{D}\otimes D)\circ \delta_{D}=
 (D\otimes \delta_{D})\circ \delta_{D}$, the counital comagma will be called a comonoid.
 If ${D} = (D, \varepsilon_{D},
 \delta_{D})$ and
${ E} = (E, \varepsilon_{E}, \delta_{E})$ are counital comagmas
(comonoids), $f:D\rightarrow E$ is  a morphism of counital comagmas
(comonoids) if $(f\otimes f)\circ \delta_{D}
=\delta_{E}\circ f$ and  $\varepsilon_{E}\circ f =\varepsilon_{D}.$

If  $A$, $B$ are unital magmas (monoids) in
${\mathcal C}$, the object $A\otimes B$ is a unital  magma (monoid)
in ${\mathcal C}$ where $\eta_{A\otimes B}=\eta_{A}\otimes \eta_{B}$
and $\mu_{A\otimes B}=(\mu_{A}\otimes \mu_{B})\circ (A\otimes
c_{B,A}\otimes B).$  In a dual way, if $D$, $E$ are counital
comagmas (comonoids) in ${\mathcal C}$, $D\otimes E$ is a  counital
comagma (comonoid) in ${\mathcal C}$ where $\varepsilon_{D\otimes
E}=\varepsilon_{D}\otimes \varepsilon_{E}$ and $\delta_{D\otimes
E}=(D\otimes c_{D,E}\otimes E)\circ( \delta_{D}\otimes \delta_{E}).$

Finally, if $D$ is a comagma and $A$ a magma, for two morphisms $f,g:D\rightarrow A$ with $f\ast g$ we will denote its convolution product in ${\mathcal C}$, that is 
$$f\ast g=\mu_{A}\co (f\ot g)\co \delta_{D}.$$
\end{definition}

\begin{definition}
\label{Weak-Hopf-quasigroup}   A weak Hopf quasigroup $H$   in ${\mathcal
C}$ is a unital magma $(H, \eta_H, \mu_H)$ and a comonoid $(H,\varepsilon_H, \delta_H)$ such that the following axioms hold:
\begin{itemize}
\item[(a1)] $\delta_{H}\co \mu_{H}=(\mu_{H}\ot \mu_{H})\co \delta_{H\ot H}.$
\item[(a2)] $\varepsilon_{H}\co \mu_{H}\co (\mu_{H}\ot
H)=\varepsilon_{H}\co \mu_{H}\co (H\ot \mu_{H})$
\item[ ]$= ((\varepsilon_{H}\co \mu_{H})\ot (\varepsilon_{H}\co
\mu_{H}))\co (H\ot \delta_{H}\ot H)$ 
\item[ ]$=((\varepsilon_{H}\co \mu_{H})\ot (\varepsilon_{H}\co
\mu_{H}))\co (H\ot (c_{H,H}^{-1}\co\delta_{H})\ot H).$
\item[(a3)]$(\delta_{H}\ot H)\co \delta_{H}\co \eta_{H}=(H\ot
\mu_{H}\ot H)\co ((\delta_{H}\co \eta_{H}) \ot (\delta_{H}\co
\eta_{H}))$  \item[ ]$=(H\ot (\mu_{H}\co c_{H,H}^{-1})\ot
H)\co ((\delta_{H}\co \eta_{H}) \ot (\delta_{H}\co \eta_{H})).$
\item[(a4)] There exists  $\lambda_{H}:H\rightarrow H$
in ${\mathcal C}$ (called the antipode of $H$) such that, if we denote the morphisms $id_{H}\ast \lambda_{H}$ by 
$\Pi_{H}^{L}$ (target morphism) and $\lambda_{H}\ast
id_{H}$ by $\Pi_{H}^{R}$ (source morphism):
\begin{itemize}
\item[(a4-1)] $\Pi_{H}^{L}=((\varepsilon_{H}\co
\mu_{H})\ot H)\co (H\ot c_{H,H})\co ((\delta_{H}\co \eta_{H})\ot
H).$
\item[(a4-2)] $\Pi_{H}^{R}=(H\ot(\varepsilon_{H}\co \mu_{H}))\co (c_{H,H}\ot H)\co
(H\ot (\delta_{H}\co \eta_{H})).$
\item[(a4-3)]$\lambda_{H}\ast \Pi_{H}^{L}=\Pi_{H}^{R}\ast \lambda_{H}= \lambda_{H}.$
\item[(a4-4)] $\mu_H\circ (\lambda_H\ot \mu_H)\circ (\delta_H\ot H)=\mu_{H}\co (\Pi_{H}^{R}\ot H).$
\item[(a4-5)] $\mu_H\circ (H\ot \mu_H)\circ (H\ot \lambda_H\ot
H)\circ (\delta_H\ot H)=\mu_{H}\co (\Pi_{H}^{L}\ot H).$
\item[(a4-6)] $\mu_H\circ(\mu_H\ot \lambda_H)\circ (H\ot
\delta_H)=\mu_{H}\co (H\ot \Pi_{H}^{L}).$
\item[(a4-7)] $\mu_H\circ (\mu_H\ot H)\circ (H\ot \lambda_H\ot H)\circ (H\ot \delta_H)=\mu_{H}\co (H\ot \Pi_{H}^{R}).$

\end{itemize}
\end{itemize}

Note that, if in the previous definition the triple $(H, \eta_H, \mu_H)$ is a monoid, we obtain the notion of weak Hopf algebra in a braided category introduced in \cite{AFG1} (see also \cite{IND}). Under this assumption, if ${\mathcal C}$ is symmetric, we have the monoidal version of the original definition of weak Hopf algebra introduced by B\"{o}hm, Nill and Szlach\'anyi in \cite{bohm}. On the other hand, if  $\varepsilon_H$ and $\delta_H$ are  morphisms of unital magmas, $\Pi_{H}^{L}=\Pi_{H}^{R}=\eta_{H}\ot \varepsilon_{H}$ and, as a consequence, we have  the notion of Hopf quasigroup defined  by Klim and Majid in \cite{Majidesfera} ( note that in this case there is not difference between the definitions for the symmetric and the braided  settings).
\end{definition}

\begin{example} 
\label{main-example}
In this example we will show that it is possible to obtain non-trivial examples of weak Hopf quasigroups working with bicategories in the sense of B\'enabou \cite{BEN}. A bicategory ${\mathcal B}$ consists of :
\begin{itemize}
\item[(b1)] A set ${\mathcal B}_{0}$, whose elements $x$ are called $0$-cells.
\item[(b2)] For each $x$, $y\in {\mathcal B}_{0}$, a category ${\mathcal B}(x,y)$ whose objects $f:x\rightarrow y$ are called $1$-cells and whose morphisms $\alpha:f \Rightarrow g$ are called $2$-cells. The composition of $2$-cells is called the vertical composition of $2$-cells and if $f$ is a $1$-cell in ${\mathcal B}(x,y)$, $x$ is called the source of $f$, represented by $s(f)$, and $y$ is called the target of $f$, denoted by $t(f)$. 
\item[(b3)] For each $x\in {\mathcal B}_{0}$, an object $1_{x}\in {\mathcal B}(x,x)$, called the identity of $x$; and for each $x,y,z\in {\mathcal B}_{0}$, a functor 
$${\mathcal B}(y,z)\times {\mathcal B}(x,y)\rightarrow {\mathcal B}(x,z)$$ 
which in objects is called the $1$-cell composition $(g,f)\mapsto g\co f$, and on arrows is called horizontal composition of $2$-cells: 
$$f,f^{\prime}\in {\mathcal B}(x,y), \;\; g,g^{\prime}\in {\mathcal B}(y,z), \; \alpha:f \Rightarrow f^{\prime}, \; \beta:g \Rightarrow g^{\prime}$$
$$(\beta, \alpha)\mapsto \beta\bullet \alpha:g\co f \Rightarrow g^{\prime}\co f^{\prime}$$ 
\item[(b4)] For each $f\in {\mathcal B}(x,y)$, $g\in {\mathcal B}(y,z)$, $h\in {\mathcal B}(z,w)$, an associative isomorphisms $\xi_{h,g,f}: (h\co g)\co f\Rightarrow h\co (g\co f)$; and for each $1$-cell $f$,  unit  isomorphisms $l_{f}:1_{t(f)}\co f\Rightarrow f$, $r_{f}:f\co 1_{s(f)}\Rightarrow f$, satisfying the following coherence axioms:
\begin{itemize}
\item[(b4-1)] The morphism $\xi_{h,g,f}$ is natural in $h$, $f$ and $g$ and $l_{f}$, $r_{f}$ are natural in $f$.
\item[(b4-2)] Pentagon axiom: $ \xi_{k,h,g\co f}\co \xi_{k\co h,g, f}=(id_{k}\bullet \xi_{ h,g, f})\co 
\xi_{k, h\co g, f}\co (\xi_{k,h,g}\bullet id_{f}).$ 
\item[(b4-3)] Triangle axiom: $r_{g}\bullet id_{f}=(id_{g}\bullet l_{f})\co \xi_{g,1_{t(f)},f}.$ 

\end{itemize}
\end{itemize}
A bicategory is normal if the unit  isomorphisms are identities. Every bicategory is biequivalent to a normal one. 
A $1$-cell $f$ is called an equivalence if there exists a $1$-cell $g:t(f)\rightarrow s(f)$ and two isomorphisms $g\co f\Rightarrow 1_{s(f)}$, $f\co g\Rightarrow 1_{t(f)}$.  In this case we will say that $g\in Inv(f)$ and, equivalently, $f\in Inv(g)$.

A bigroupoid is a bicategory where every $1$-cell is an equivalence and every $2$-cell is an isomorphism. We will say that a bigroupoid ${\mathcal B}$ is finite if ${\mathcal B}_{0}$ is finite and ${\mathcal B}(x,y)$ is small for all $x,y$. Note that if ${\mathcal B}$ is a  bigroupoid where ${\mathcal B}(x,y)$ is small for all $x,y$ and we pick a finite number of $0$-cells, considering the full sub-bicategory  generated by these $0$-cells, we have an example of finite bigroupoid.

Let ${\mathcal B}$ be a finite normal bigroupoid and denote  by ${\mathcal B}_{1}$ the set of $1$-cells. Let ${\Bbb F}$ be a field and ${\Bbb F}{\mathcal B}$ the direct product 
$${\Bbb F}{\mathcal B}=\bigoplus_{f\in {\mathcal B}_{1}}Ff.$$
The vector space ${\Bbb F}{\mathcal B}$ is a unital nonassociative algebra  where the product of two $1$-cells is equal to their $1$-cell composition if the latter is defined and $0$ otherwise, i.e.
$g.f=g\circ f$ if $s(g)=t(f)$ and
$g.f=0$ if $s(g)\neq t(f)$. The unit element is $$1_{{\Bbb F}{\mathcal B}}=\sum_{x\in {\mathcal B}_{0}}1_{x}.$$

Let $H={\Bbb F}{\mathcal B}/I({\mathcal B})$ be the quotient algebra where  $I({\mathcal B})$ is the ideal of ${\Bbb F}{\mathcal B}$ generated  by 
$$ h-g\co (f\co h),\; p-(p\co f)\co g,$$ 
with  $f\in {\mathcal B}_{1},$ 
$g\in Inv(f)$,  and $h,p \in {\mathcal B}_{1}$ such that $t(h)=s(f)$, $t(f)=s(p)$. In what follows, for any $1$-cell $f$ we denote its class in  $H$  by $[f]$. 

If there exists a $1$-cell $f$ in ${\mathcal B}(x,y)$ such that $[f]=0$ and we pick $g\in Inv(f)$ we have that $[1_{x}]=[g.f]=[g].[f]=0$ and  $[1_{y}]=[f.g]=[f].[g]=0$. Conversely, if $[1_{x}]=0$ or 
$[1_{y}]=0$ we have that $[f]=0$ because $f.1_{x}=1_{y}.f=f$. Therefore,  the following assertion holds: There exists a $1$-cell $f$ in ${\mathcal B}(x,y)$ such that $[f]=0$ if and only if $[h]=0$ for all  $1$-cell $h$ in ${\mathcal B}(x,y)$. Moreover, let $x,y,z, w$ be $0$-cells. If there exists a $1$-cell $f\in {\mathcal B}(x,y)$ satisfying that $[f]=0$ we have that $[1_{y}]=0$ and then $[h]=0$ for all $1$-cell $h$ in $ {\mathcal B}(y,z)$. As a consequence, $[1_{z}]=0$ and this clearly implies that 
 $[p]=0$ for all $1$-cell $p$  in ${\mathcal B}(z,w)$. Thus, if there exists a $1$-cell $f$ such that $[f]=0$ we obtain that $[h]=0$ for all $h\in {\mathcal B}_{1}$. According to this reasoning, there exists a $1$-cell $f$ such that $[f]=0$ if and only if $I({\mathcal B})=F{\mathcal B}$. Equivalently, $H$ is not null, if and only if $[f]\ne 0$ for all $f\in {\mathcal B}_{1}$.

Then, in the remainder of this section, we assume that $I({\mathcal B})$ is a proper ideal. Under this condition if $f\in {\mathcal B}_{1}$ and $g,h\in Inv(f)$ we have 
$$[g]=[g. (f. g)]=[g.1_{y}]=[1_{x}.g]=[(h.f).g]=[h].$$
Moreover, for all $f, f^{\prime}\in {\mathcal B}_{1}$ such that $[f]=[f^{\prime}]$, the following holds: if 
$s(f)\ne s(f^{\prime})$ we have 
$$[f]=[f.1_{s(f)}]=[f].[1_{s(f)}]=[f^{\prime}].[1_{s(f)}]=[f^{\prime}.1_{s(f)}]=0.$$
In a similar way, if $t(f)\ne t(f^{\prime})$ we obtain that $[f]=0.$ Thus,  $[f]=[f^{\prime}]$, clearly forces that $f$ and $f^{\prime}$ are $1$-cells in ${\mathcal B}(s(f), t(f))$. Moreover, if $f,f^{\prime}$ are $1$-cells in ${\mathcal B}(x, y)$ such that $[f]=[f^{\prime}]$ and $g\in Inv(f)$, $g^{\prime}\in Inv(f^{\prime})$
we have 
$$[g^{\prime}]=[1_{x}.g^{\prime}]=[(g.f).g^{\prime}]=([g].[f]).[g^{\prime}]=([g].[f^{\prime}]).[g^{\prime}]
=[(g.f^{\prime}).g^{\prime}]=[g].$$
Then, for a $1$-cell $f$ we denote by $[f]^{-1}$ the class of any $g\in Inv(f)$. Note that, in the previous equalities, we proved that $[f]^{-1}$ is independent of the choices of $g\in Inv(f)$ and 
$f^{\prime}$ such that $[f]=[f^{\prime}]$.

Therefore, the vector space $H$ with the product $\mu_{H}([g]\ot [f])=[g.f]$ and  the  unit $$\eta_{ H}(1_{{\Bbb F}})=[1_{{\Bbb F}{\mathcal B}}]=\sum_{x\in {\mathcal B}_{0}}[1_{x}]$$ is a unital magma. Also, it is easy to show that $H$ is a comonoid with coproduct $\delta_{H}([f])=[f]\ot [f]$ and counit $\varepsilon_{H}([f])=1_{{\Bbb F}}$. Moreover,  the morphism $\lambda_{H}:H\rightarrow H$, $\lambda_{H}([f])=[f]^{-1}$ is well-defined and $H=(H,\eta_{H}, \mu_{H}, \varepsilon_{H}, \delta_{H}, \lambda_{H})$ is a weak Hopf quasigroup. Indeed: First note that, for all $1$-cells $f,g$ we have 
$$(\delta_{H}\co \mu_{H}) ([g]\ot [f])=[g.f]\ot [g.f]$$
if $s(g)=t(f)$ and $0$ otherwise. On the other hand, 
$$((\mu_{H}\ot \mu_{H})\co \delta_{H\ot H})([g]\ot [f])=(\mu_{H} ([g]\ot [f])\ot \mu_{H} ([g]\ot [f]))=[g.f]\ot [g.f]$$ if $s(g)=t(f)$ and $0$ otherwise because $c_{H,H}([g]\ot [f])=[f]\ot [g]$. Therefore,  (a1) of Definition \ref{Weak-Hopf-quasigroup} holds.

If $f,g, h$ are $1$-cells we have the following equalities:
$$(\varepsilon_{H}\co \mu_{H}\co (\mu_{H}\ot
H))([h]\ot [g]\ot [f]) =1_{{\Bbb F}}=(\varepsilon_{H}\co \mu_{H}\co (H\ot \mu_{H}))([h]\ot [g]\ot [f])$$
when $s(h)=t(g),\; s(g)=t(f)$ and 
$$(\varepsilon_{H}\co \mu_{H}\co (\mu_{H}\ot
H))([h]\ot [g]\ot [f]) =0=(\varepsilon_{H}\co \mu_{H}\co (H\ot \mu_{H}))([h]\ot [g]\ot [f])$$
otherwise. Also, 
$$(((\varepsilon_{H}\co \mu_{H})\ot (\varepsilon_{H}\co
\mu_{H}))\co (H\ot \delta_{H}\ot H))([h]\ot [g]\ot [f]) $$
$$= ((\varepsilon_{H}\co \mu_{H})([h]\ot [g]))\ot ((\varepsilon_{H}\co
\mu_{H}))([g]\ot [f]))=1_{{\Bbb F}}$$
if $s(h)=t(g),\; s(g)=t(f)$ and 
$$(((\varepsilon_{H}\co \mu_{H})\ot (\varepsilon_{H}\co
\mu_{H}))\co (H\ot \delta_{H}\ot H))([h]\ot [g]\ot [f])=0$$
 otherwise. Then (a2) of Definition \ref{Weak-Hopf-quasigroup} holds because in this case $c_{H,H}=c_{H,H}^{-1}$ and $\delta_{H}\co c_{H,H}=\delta_{H}$ (i.e. $H$ is cocommutative).

To prove (a3) first note that 
$$((\delta_{H}\ot H)\co \delta_{H}\co \eta_{H}) (1_{{\Bbb F}})=\sum_{x\in {\mathcal B}_{0}}[1_{x}]\ot [1_{x}]\ot [1_{x}]$$
Then (a3) holds because:
$$((H\ot
\mu_{H}\ot H)\co ((\delta_{H}\co \eta_{H}) \ot (\delta_{H}\co
\eta_{H})))(1_{{\Bbb F}}\ot 1_{{\Bbb F}})$$
$$=(H\ot
\mu_{H}\ot H)(\sum_{x\in {\mathcal B}_{0}}[1_{x}]\ot [1_{x}] \ot \sum_{y\in {\mathcal B}_{0}}[1_{y}]\ot [1_{y}]) $$
$$=\sum_{x,y\in {\mathcal B}_{0}}[1_{x}]\ot [1_{x}.1_{y}] \ot [1_{y}]=\sum_{x\in {\mathcal B}_{0}}[1_{x}]\ot [1_{x}] \ot [1_{x}].$$

To prove the antipode identities first note that
\begin{equation}
\label{pi-ex}
\Pi_{H}^{L}([f])=[1_{t(f)}], \;\; \Pi_{H}^{R}([f])=[1_{t(s)}]
\end{equation}
for all $1$-cell $f$. 

Then, (a4-1) and (a4-2) hold because, for all $1$-cell $f$,  
$$(((\varepsilon_{H}\co
\mu_{H})\ot H)\co (H\ot c_{H,H})\co ((\delta_{H}\co \eta_{H})\ot
H))([f])$$
$$=(((\varepsilon_{H}\co
\mu_{H})\ot H)\co (H\ot c_{H,H}))(\sum_{x\in {\mathcal B}_{0}}[1_{x}]\ot [1_{x}] \ot [f])$$
$$=\sum_{x\in {\mathcal B}_{0}} \varepsilon_{H}([1_{x}.f])\ot [1_{x}]=[1_{t(f)}]$$
and, by a similar calculus, 
$$((H\ot(\varepsilon_{H}\co \mu_{H}))\co (c_{H,H}\ot H)\co
(H\ot (\delta_{H}\co \eta_{H})))([f])=[1_{s(f)}].$$

Also, if $f\in {\mathcal B}_{1}$, by (\ref{pi-ex}), 
$$(\lambda_{H}\ast \Pi_{H}^{L})([f])=[f]^{-1}.[1_{t(f)}]=[f]^{-1}=\lambda_{H}([f]),$$ 
$$(\Pi_{H}^{R}\ast \lambda_{H})([f])=[1_{s(f)}].[f]^{-1}=[f]^{-1}=\lambda_{H}([f])$$
and then (a4-3) holds.

The proof for (a4-4) is the following: It follows easily that for two $1$-cells $f, h$ we have that 
$$(\mu_{H}\co (\Pi_{H}^{R}\ot H))([h]\ot [f])=[f]$$
if $s(h)=t(f)$ and $0$ otherwise.  On the other hand, 
$$(\mu_H\circ (\lambda_H\ot \mu_H)\circ (\delta_H\ot H))([h]\ot [f])=\mu_{H}([h]^{-1}\ot [h.f])$$
if $s(h)=t(f)$ and $0$ otherwise. Therefore, if $m\in Inv(h)$ and $s(h)=t(f)$ the equality
$$\mu_{H}([h]^{-1}\ot [h.f])=[m.(h.f)]=[f]$$
holds and thus (a4-4)  holds.

If $f$, $h$ are $1$-cells we have 
$$(\mu_{H}\co (\Pi_{H}^{L}\ot H))([h]\ot [f])=[f]$$
if $t(h)=t(f)$ and $0$ otherwise.  Moreover,  let $m\in Inv(h)$, then 
$$(\mu_H\circ (H\ot \mu_H)\circ (H\ot \lambda_H\ot
H)\circ (\delta_H\ot H))([h]\ot [f])=\mu_{H}([h]\ot [m.f])$$ 
$$=[h.(m.f)]=[f]$$
if $t(h)=t(f)$  and $0$ otherwise. Therefore, (a4-5)  holds. 

The proofs for (a4-6) and (a4-7) are similar and  the details are left to the reader.

Note that, in this example, if ${\mathcal B}_{0}=\{x\}$ we obtain that $H$ is a Hopf quasigroup. Moreover, if $\vert {\mathcal B}_{0}\vert >1$ and the product defined in $H$ is associative we have an example of weak Hopf algebra. 

\end{example}

\section{Basic properties for weak Hopf quasigroups}

In this section we will show the main properties of weak Hopf quasigroups.  First, note that by the naturality of the braiding, for the morphisms target and source the following equalities hold:
\begin{equation}
\label{new-pi-1}
\Pi_{H}^{L}=((\varepsilon_{H}\co \mu_{H}\co c^{-1}_{H,H})\ot
H)\co (H\ot (\delta_{H}\co \eta_{H}))
\end{equation}
$$=(H\ot (\varepsilon_{H}\co
\mu_{H}))\co (( c^{-1}_{H,H}\co \delta_{H}\co \eta_{H})\ot H),$$
\begin{equation}
\label{new-pi-2}
\Pi_{H}^{R}=(H\ot (\varepsilon_{H}\co \mu_{H}\co
c^{-1}_{H,H}))\co ((\delta_{H}\co \eta_{H})\ot H)=
\end{equation}
 $$
((\varepsilon_{H}\co \mu_{H})\ot H)\co (H\ot (c^{-1}_{H,H} \co
\delta_{H}\co \eta_{H})).
$$

\begin{proposition}
Let $H$ be a weak Hopf quasigroup. The following equalities hold:
\begin{equation}
\label{pi-l}
\Pi_{H}^{L}\ast id_{H}=id_{H}\ast \Pi_{H}^{R}=id_{H},
\end{equation}
\begin{equation}
\label{pi-eta}
\Pi_{H}^{L}\co\eta_{H}=\eta_{H}=\Pi_{H}^{R}\co\eta_{H},
\end{equation}
\begin{equation}
\label{pi-varep}
\varepsilon_{H}\co \Pi_{H}^{L}=\varepsilon_{H}=\varepsilon_{H}\co \Pi_{H}^{R}.
\end{equation}

\end{proposition}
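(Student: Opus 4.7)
The plan is to prove each identity by expanding the explicit formulas (a4-1) and (a4-2) for $\Pi_{H}^{L}$ and $\Pi_{H}^{R}$ and then reducing the resulting composites using axiom (a1), the unit/counit identities, and the naturality of the braiding. In all three cases I will argue the left-handed statement, the right-handed one following by the dual calculation based on (a4-2) (equivalently (\ref{new-pi-2})).

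For (\ref{pi-l}), substituting (a4-1) into $\Pi_{H}^{L}\ast id_{H}=\mu_{H}\co (\Pi_{H}^{L}\ot H)\co \delta_{H}$ and rearranging yields
\[
\Pi_{H}^{L}\ast id_{H}=((\varepsilon_{H}\co \mu_{H})\ot \mu_{H})\co (H\ot c_{H,H}\ot H)\co ((\delta_{H}\co \eta_{H})\ot \delta_{H}).
\]
The key observation is that $(\mu_{H}\ot \mu_{H})\co (H\ot c_{H,H}\ot H)\co ((\delta_{H}\co \eta_{H})\ot \delta_{H})=\delta_{H}$: this composite is nothing but $\delta_{H}\co \mu_{H}\co (\eta_{H}\ot H)$ written out via (a1), the formula $\delta_{H\ot H}=(H\ot c_{H,H}\ot H)\co (\delta_{H}\ot \delta_{H})$, and the unit axiom $\mu_{H}\co (\eta_{H}\ot H)=id_{H}$. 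Factoring $(\varepsilon_{H}\co \mu_{H})\ot \mu_{H}=(\varepsilon_{H}\ot H)\co (\mu_{H}\ot \mu_{H})$ and applying counitality $(\varepsilon_{H}\ot H)\co \delta_{H}=id_{H}$ finishes the argument. The identity $id_{H}\ast \Pi_{H}^{R}=id_{H}$ is proved symmetrically from (a4-2) and the right unit axiom.

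For (\ref{pi-eta}), substituting $\eta_{H}$ on the right of (a4-1) and using the naturality of the braiding, $c_{H,H}\co (H\ot \eta_{H})=\eta_{H}\ot H$, collapses the composite to $((\varepsilon_{H}\co \mu_{H}\co (H\ot \eta_{H}))\ot H)\co (\delta_{H}\co \eta_{H})$; the unit axiom reduces this to $(\varepsilon_{H}\ot H)\co \delta_{H}\co \eta_{H}$, which equals $\eta_{H}$ by counitality. For (\ref{pi-varep}), the dual manipulation works: composing $\varepsilon_{H}$ with (a4-1), pulling the outer $\varepsilon_{H}$ through $c_{H,H}$ via the naturality identity $(H\ot \varepsilon_{H})\co c_{H,H}=\varepsilon_{H}\ot H$, and then applying $(H\ot \varepsilon_{H})\co \delta_{H}\co \eta_{H}=\eta_{H}$ followed by the unit axiom reduces $\varepsilon_{H}\co \Pi_{H}^{L}$ to $\varepsilon_{H}\co \mu_{H}\co (\eta_{H}\ot H)=\varepsilon_{H}$.

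The only real difficulty is the bookkeeping of the braiding: since $\mathcal{C}$ is only braided rather than symmetric, every rearrangement of tensor factors — in particular, every time one moves $\eta_{H}$ or $\varepsilon_{H}$ past an inner factor — must be justified by naturality of $c_{H,H}$ applied in the appropriate position. Beyond this, the proofs are straightforward assemblies of the defining axioms together with the unit/counit identities.
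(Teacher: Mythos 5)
Your proof is correct and takes essentially the same route as the paper's: for (\ref{pi-l}) the paper likewise rewrites $\Pi_{H}^{L}\ast id_{H}$ as $(\varepsilon_{H}\ot H)\co \mu_{H\ot H}\co ((\delta_{H}\co \eta_{H})\ot \delta_{H})$ and collapses it to $id_{H}$ via (a1), the unit axiom and counitality, while (\ref{pi-eta}) and (\ref{pi-varep}) are dispatched there as immediate consequences of (a4-1) and (a4-2) --- precisely the computations you spell out. No gaps.
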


\begin{proof}
By the definition of $\Pi_{H}^{L}$  and (a1) of Definition \ref{Weak-Hopf-quasigroup} we have 
$$\Pi_{H}^{L}\ast id_{H}=(\varepsilon_{H}\ot H)\co \mu_{H\ot H}\co ((\delta_{H}\co \eta_{H})\ot \delta_{H})=
(\varepsilon_{H}\ot H)\co \delta_{H}\co \mu_{H}\co (\eta_{H}\ot H)=id_{H}.$$
We can now proceed analogously to the proof of $id_{H}\ast \Pi_{H}^{R}=id_{H}$. Finally (\ref{pi-eta}) and (\ref{pi-varep}) follow easily from the definitions of $\Pi_{H}^{L}$ and $\Pi_{H}^{R}$.

\end{proof}

\begin{proposition}
\label{antipode-1} The antipode of a  weak Hopf quasigroup $H$ is unique and leaves the unit and the counit invariant, i.e. $\lambda_{H}\co \eta_{H}=\eta_{H}$ and $\varepsilon_{H}\co\lambda_{H}=\varepsilon_{H}$.
\end{proposition}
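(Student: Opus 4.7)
The plan is to prove the three assertions---uniqueness, $\varepsilon_{H}\co \lambda_{H}=\varepsilon_{H}$, and $\lambda_{H}\co \eta_{H}=\eta_{H}$---in that order, in each case reducing an expression involving the antipode to one in which only $\Pi_{H}^{L}$ or $\Pi_{H}^{R}$ appears, whose behaviour is already controlled by (\ref{pi-eta}) and (\ref{pi-varep}).

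For uniqueness, let $\lambda_{H}$ and $\lambda_{H}'$ be two antipodes. Axioms (a4-1) and (a4-2) characterise $\Pi_{H}^{L}$ and $\Pi_{H}^{R}$ without reference to the antipode, so both antipodes induce the same target and source morphisms. Starting from $\lambda_{H}=\lambda_{H}\ast \Pi_{H}^{L}$ by (a4-3), and unfolding the right factor as $\Pi_{H}^{L}=id_{H}\ast \lambda_{H}'$, one obtains
$$\lambda_{H}=\mu_{H}\co (\lambda_{H}\ot \mu_{H})\co (H\ot H\ot \lambda_{H}')\co (H\ot \delta_{H})\co \delta_{H}.$$
Coassociativity rewrites the inner $(H\ot \delta_{H})\co \delta_{H}$ as $(\delta_{H}\ot H)\co \delta_{H}$, after which axiom (a4-4) applied to $\lambda_{H}$ converts the block $\mu_{H}\co (\lambda_{H}\ot \mu_{H})\co (\delta_{H}\ot H)$ into $\mu_{H}\co (\Pi_{H}^{R}\ot H)$. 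What remains is precisely $\Pi_{H}^{R}\ast \lambda_{H}'$, and this equals $\lambda_{H}'$ by (a4-3) applied now to $\lambda_{H}'$.

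To prove $\varepsilon_{H}\co \lambda_{H}=\varepsilon_{H}$, I would first establish the auxiliary identity $\varepsilon_{H}\co \mu_{H}\co (\Pi_{H}^{R}\ot H)=\varepsilon_{H}\co \mu_{H}$. This follows by expanding $\Pi_{H}^{R}=\mu_{H}\co (\lambda_{H}\ot H)\co \delta_{H}$, using (a2) to reassociate under $\varepsilon_{H}\co \mu_{H}$, invoking coassociativity, and collapsing the resulting $\Pi_{H}^{R}$-factor by (\ref{pi-varep}). Writing $\lambda_{H}=\Pi_{H}^{R}\ast \lambda_{H}$ from (a4-3), one then has
$$\varepsilon_{H}\co \lambda_{H}=\varepsilon_{H}\co \mu_{H}\co (\Pi_{H}^{R}\ot \lambda_{H})\co \delta_{H}=\varepsilon_{H}\co \mu_{H}\co (H\ot \lambda_{H})\co \delta_{H}=\varepsilon_{H}\co \Pi_{H}^{L}=\varepsilon_{H}.$$

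For $\lambda_{H}\co \eta_{H}=\eta_{H}$ the strategy is analogous: evaluate $\lambda_{H}=\lambda_{H}\ast \Pi_{H}^{L}$ (or $\Pi_{H}^{R}\ast \lambda_{H}$) at $\eta_{H}$, use axiom (a3) to replace the iterated coproduct of $\eta_{H}$ by the expression on its right-hand side, and apply one of (a4-4)--(a4-7) to collapse the result to $\Pi_{H}^{L}\co \eta_{H}$ or $\Pi_{H}^{R}\co \eta_{H}$, both equal to $\eta_{H}$ by (\ref{pi-eta}). The main obstacle throughout is the failure of associativity of $\mu_{H}$: axioms (a4-4)--(a4-7) are the only device for moving parentheses, and the Sweedler indices produced by $\delta_{H}$---aligned via coassociativity, and, when the input is $\eta_{H}$, via (a3)---must be arranged so that one of these axioms fires cleanly rather than producing a tautology.
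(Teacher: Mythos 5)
Your proposal is correct and takes essentially the same route as the paper: uniqueness by writing one antipode as a double convolution with the (antipode-independent, by (a4-1)/(a4-2)) target or source morphism and collapsing with one of (a4-4)--(a4-7) (the paper applies (a4-6) to $s_{H}=(\lambda_{H}\ast id_{H})\ast s_{H}$, you apply the mirror-image (a4-4) to $\lambda_{H}\ast(id_{H}\ast \lambda_{H}')$), and the unit and counit identities by reducing, via (a3) resp. (a2), to $\Pi_{H}^{L}\co\eta_{H}$, $\Pi_{H}^{R}\co\eta_{H}$ and $\varepsilon_{H}\co\Pi_{H}^{L}$, which are handled by (\ref{pi-eta}) and (\ref{pi-varep}). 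The only cosmetic differences are that your counit argument isolates $\varepsilon_{H}\co\mu_{H}\co(\Pi_{H}^{R}\ot H)=\varepsilon_{H}\co\mu_{H}$ (the paper's later identity (\ref{mu-pi-r-varep})) as a stepping stone --- harmless, since you derive it directly from (a2), coassociativity and (\ref{pi-varep}) --- and that for $\lambda_{H}\co\eta_{H}=\eta_{H}$ you fire (a4-4)/(a4-6) where the paper instead uses the antipode-free braided form (\ref{new-pi-2}) of $\Pi_{H}^{R}$ together with the counit property; both variants check out.
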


\begin{proof} Let $\lambda_{H}, s_{H}:H\rightarrow H$ two morphisms satisfying (a4) of Definition \ref{Weak-Hopf-quasigroup}. Then, 
$$s_{H}= (s_{H}\ast H)\ast s_{H} = (\lambda_{H}\ast H)\ast s_{H} = \mu_{H}\co (\mu_{H}\ot s_{H})\co (\lambda_{H}\ot \delta_{H})\co \delta_{H}= \lambda_{H}\ast \Pi_{H}^{L}=\lambda_{H}, $$
where the first and the last equalities  follow by (a4-3) of Definition \ref{Weak-Hopf-quasigroup}, the second one by 
 (a4-2) of Definition \ref{Weak-Hopf-quasigroup}, the third one by the coassociativity of $\delta_{H}$ and the fourth one by 
(a4-6) of Definition \ref{Weak-Hopf-quasigroup}. 

On the other hand,  by (a4-3), (a3) of Definition \ref{Weak-Hopf-quasigroup}, the naturality of the braiding and (\ref{pi-eta}), we have 
\begin{itemize}
\item[ ]$\hspace{0.38cm} \lambda_{H}\co \eta_{H}$

\item[ ]$= (\Pi_{H}^{R}\ast \lambda_{H})\co \eta_{H} $

\item[ ]$= \mu_{H}\co (H\ot (\varepsilon_{H}\co \mu_{H}\co c_{H,H}^{-1})\ot
\lambda_{H})\co ((\delta_{H}\co \eta_{H}) \ot (\delta_{H}\co \eta_{H})) $

\item[ ]$=\mu_{H}\co (((H\ot \varepsilon_{H})\co \delta_{H})\ot \lambda_{H})\co \delta_{H}\co \eta_{H} $

\item[ ]$=\Pi_{H}^{L}\co \eta_{H} $

\item[ ]$=\eta_{H}. $
\end{itemize}

The proof for the equalities involving the counit follows a similar pattern but 
 using (a2) of Definition \ref{Weak-Hopf-quasigroup} and (\ref{pi-varep}) instead of (a3) and (\ref{pi-eta}) respectively.
\end{proof}

\begin{definition}
\label{pi-bar}
Let $H$ be a weak Hopf quasigroup. We define the morphisms $\overline{\Pi}_{H}^{L}$ and $\overline{\Pi}_{H}^{R}$ by 
$$\overline{\Pi}_{H}^{L}=(H\ot (\varepsilon_{H}\co \mu_{H}))\co ((\delta_{H}\co \eta_{H})\ot H),$$
and 
$$\overline{\Pi}_{H}^{R}=((\varepsilon_{H}\co \mu_{H})\ot H)\co (H\ot (\delta_{H}\co \eta_{H})).$$

\end{definition}

\begin{proposition}
\label{idemp}
Let $H$ be a weak Hopf quasigroup. The morphisms $\Pi_{H}^{L}$, $\Pi_{H}^{R}$, $\overline{\Pi}_{H}^{L}$ and 
$\overline{\Pi}_{H}^{R}$ are idempotents.
\end{proposition}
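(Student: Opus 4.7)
The plan is to prove each of the four idempotency statements by direct computation, expanding one copy of the morphism by its definition and then collapsing using axiom (a3) of Definition \ref{Weak-Hopf-quasigroup} together with counitality.

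For $\Pi_{H}^{L}\co \Pi_{H}^{L}=\Pi_{H}^{L}$, I would start from the formula in (a4-1) and write
\[
\Pi_{H}^{L}\co \Pi_{H}^{L} = ((\varepsilon_{H}\co \mu_{H})\ot \Pi_{H}^{L})\co (H\ot c_{H,H})\co ((\delta_{H}\co \eta_{H})\ot H).
\]
Then I would expand the inner $\Pi_{H}^{L}$ using the alternative form (\ref{new-pi-1}), which introduces a second copy of $\delta_{H}\co \eta_{H}$. After pushing everything into standard position by naturality of the braiding (and the hexagon axiom), the two copies of $\delta_{H}\co \eta_{H}$ sit adjacent to one another with a factor $\mu_{H}\co c_{H,H}^{-1}$ between them, exactly in the shape produced by the second equality in (a3). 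Applying (a3) collapses this into $(\delta_{H}\ot H)\co \delta_{H}\co \eta_{H}$; coassociativity of $\delta_{H}$ and the counit axiom on the middle factor then regenerate $\Pi_{H}^{L}$.

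The case $\Pi_{H}^{R}\co \Pi_{H}^{R}=\Pi_{H}^{R}$ is analogous, using (a4-2) together with (\ref{new-pi-2}) and the first form of (a3) (the one with $\mu_{H}$ and no inverse braiding). For the overlined maps, which are defined in Definition \ref{pi-bar} without any braiding, the computation is cleaner: composing $\overline{\Pi}_{H}^{L}$ with itself produces two copies of $\delta_{H}\co \eta_{H}$ already lined up so that a direct application of (a3) (this time the first variant for $\overline{\Pi}_{H}^{L}$ and the second for $\overline{\Pi}_{H}^{R}$) contracts them, after which counitality finishes the job.

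The main technical obstacle is the braided bookkeeping: the two forms of (a3) (and correspondingly the two expressions (a4-1) vs.\ (\ref{new-pi-1}), and (a4-2) vs.\ (\ref{new-pi-2})) are genuinely distinct in a non-symmetric braided category, and the art consists in choosing, in each of the four cases, the variant of the definition and of (a3) whose braiding pattern matches so that the two copies of $\delta_{H}\co \eta_{H}$ can actually be collapsed. Once the right pairing is made, the remainder is a short diagrammatic computation using only naturality of $c_{H,H}$, coassociativity and counitality.
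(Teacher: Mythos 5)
Your strategy is exactly the paper's: expand the two factors of each projector via (a4-1)/(a4-2) and (\ref{new-pi-1})/(\ref{new-pi-2}), collapse the two resulting copies of $\delta_{H}\co \eta_{H}$ with (a3), and finish with counitality; for $\Pi_{H}^{L}$ your mixed expansion reduces, after the single naturality/hexagon move you describe, to the paper's two-line computation, which simply uses (\ref{new-pi-1}) for both factors (and (\ref{new-pi-2}) for both factors of $\Pi_{H}^{R}$), so that the two unit-coproducts are adjacent from the start and no braiding bookkeeping is needed at all. One small correction to your accounting: the variant of (a3) that actually appears is the $\mu_{H}\co c_{H,H}^{-1}$ one for \emph{both} $\Pi_{H}^{L}$ and $\Pi_{H}^{R}$, and the plain $\mu_{H}$ one for \emph{both} $\overline{\Pi}_{H}^{L}$ and $\overline{\Pi}_{H}^{R}$ (you have the two right-hand cases swapped) -- but this slip is harmless, since (a3) asserts that the two variants are equal to each other, so whichever of $\mu_{H}$ or $\mu_{H}\co c_{H,H}^{-1}$ lands between the two copies of $\delta_{H}\co \eta_{H}$ in the standard adjacent configuration, the same collapse applies.
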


\begin{proof} First, by (\ref{new-pi-1}) and (a3) of Definition \ref{Weak-Hopf-quasigroup} we have that 

\begin{itemize}
\item[ ]$\hspace{0.38cm}\Pi_{H}^{L}\co \Pi_{H}^{L} $

\item[ ]$=((\varepsilon_{H}\co \mu_{H}\co c_{H,H}^{-1})\ot
(\varepsilon_{H}\co \mu_{H}\co c_{H,H}^{-1})\ot H )\co (H\ot
(\delta_{H}\co \eta_{H})\ot (\delta_{H}\co \eta_{H}))  $

\item[ ]$=((\varepsilon_{H}\co \mu_{H}\co c_{H,H}^{-1})\ot
\varepsilon_{H}\ot H )\co (H\ot ((\delta_{H}\ot H)\co\delta_{H}\co
\eta_{H})) $

\item[ ]$=\Pi_{H}^{L}.
 $
\end{itemize}

With the same reasoning but using (\ref{new-pi-2}) instead of (\ref{new-pi-1}) we  prove that $\Pi_{H}^{R}$ is an idempotent morphism. Finally, by (a3) of Definition \ref{Weak-Hopf-quasigroup}, $\overline{\Pi}_{H}^{L}\co
\overline{\Pi}_{H}^{L} =\overline{\Pi}_{H}^{L}$ and $\overline{\Pi}_{H}^{R}\co
\overline{\Pi}_{H}^{R} =\overline{\Pi}_{H}^{R}.$
\end{proof}

\begin{proposition}
\label{mu-idemp}
Let $H$ be a weak Hopf quasigroup. The following identities hold:
\begin{equation}
\label{mu-pi-l}
\mu_{H}\co (H\ot \Pi_{H}^{L})=((\varepsilon_{H}\co
\mu_{H})\ot H)\co (H\ot c_{H,H})\co (\delta_{H}\ot
H),
\end{equation}
\begin{equation}
\label{mu-pi-r}
\mu_{H}\co (\Pi_{H}^{R}\ot H)=(H\ot(\varepsilon_{H}\co \mu_{H}))\co (c_{H,H}\ot H)\co
(H\ot \delta_{H}),
\end{equation}
\begin{equation}
\label{mu-pi-l-var}
\mu_{H}\co (H\ot \overline{\Pi}_{H}^{L})=(H\ot (\varepsilon_{H}\co
\mu_{H}))\co (\delta_{H}\ot H),
\end{equation}
\begin{equation}
\label{mu-pi-r-var}
\mu_{H}\co (\overline{\Pi}_{H}^{R}\ot H)=((\varepsilon_{H}\co
\mu_{H})\ot H)\co (H\ot \delta_{H}).
\end{equation}

\end{proposition}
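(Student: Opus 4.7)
My plan is to prove all four identities by a common four-step morphism-level manipulation that uses only the axioms (a1)--(a3) of Definition \ref{Weak-Hopf-quasigroup} together with coassociativity of $\delta_H$ and the counit property; the antipode axioms (a4-3)--(a4-7) are not needed. Indeed, on the left-hand side of each of (\ref{mu-pi-l})--(\ref{mu-pi-r-var}) the morphisms $\Pi_H^L$, $\Pi_H^R$, $\overline{\Pi}_H^L$, $\overline{\Pi}_H^R$ admit antipode-free descriptions by (a4-1), (a4-2), and Definition \ref{pi-bar}, so the identity is really between morphisms built from $\mu_H$, $\eta_H$, $\delta_H$, $\varepsilon_H$, and the braiding.

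I will take (\ref{mu-pi-l-var}) as the template. Starting from the right-hand side $(H \ot (\varepsilon_H \co \mu_H)) \co (\delta_H \ot H)$, the plan is: first, rewrite $\delta_H = \delta_H \co \mu_H \co (H \ot \eta_H)$ and apply (a1) to expand this as $(\mu_H \ot \mu_H)\co(H \ot c_{H,H} \ot H)\co(\delta_H \ot (\delta_H \co \eta_H))$; second, apply the second displayed equality of (a2) to split $\varepsilon_H \co \mu_H \co (\mu_H \ot H)$ into $((\varepsilon_H \co \mu_H)\ot(\varepsilon_H \co \mu_H))\co(H \ot \delta_H \ot H)$, which introduces an extra coproduct on the second factor of $\delta_H \co \eta_H$; third, use coassociativity of $\delta_H$ applied at $\eta_H$ to transfer this additional coproduct to the first factor; fourth, collapse the result using (a1) and the counit identity $(H \ot \varepsilon_H)\co\delta_H = id_H$. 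What remains is precisely $\mu_H \co (H \ot \overline{\Pi}_H^L)$, as required.

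The identity (\ref{mu-pi-r-var}) follows by the mirror-symmetric argument (swap left and right throughout, using $(\varepsilon_H \ot H)\co\delta_H = id_H$). For (\ref{mu-pi-l}) and (\ref{mu-pi-r}), the same four-step template applies, but at the splitting step one invokes the \emph{third} displayed equality of (a2), the one carrying the factor $c_{H,H}^{-1}\co\delta_H$; this is precisely what produces the braiding $c_{H,H}$ present on the right-hand side of these two identities. At the final collapsing step, the resulting expression matches the defining formula (a4-1) for $\Pi_H^L$ on the second factor (respectively (a4-2) for $\Pi_H^R$ on the first factor).

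The main point of care --- and what I expect to be the only genuine subtlety --- is bookkeeping in the braided, non-symmetric setting: one must carefully select between the second and third displayed equalities of (a2) at the splitting step, so that the braiding factors produced on the intermediate expression match exactly those on the target right-hand side. Since $\mu_H$ is not associative, any rebracketing of the product must go through the counit via (a2) and never through literal associativity of $\mu_H$. Beyond this careful braiding management, each step is a direct application of a single stated axiom, and no further structural ingredient is required.
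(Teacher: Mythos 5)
Your proposal is correct and is essentially the paper's own proof read in the reverse direction: the paper also establishes each identity by the chain unit/counit trick, (a1)-expansion through $\delta_{H\ot H}$, the appropriate displayed form of (a2) (the $c_{H,H}^{-1}\co\delta_{H}$ version precisely for (\ref{mu-pi-l}) and (\ref{mu-pi-r})), coassociativity at $\eta_{H}$, and naturality of the braiding, with no antipode axioms beyond (a4-1)/(a4-2). The only cosmetic difference is that the paper channels (a4-1) and (a4-2) through their naturality-rewritten forms (\ref{new-pi-1}) and (\ref{new-pi-2}), which is exactly the braiding bookkeeping you flag as the point of care.
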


\begin{proof} 
We first prove (\ref{mu-pi-l}). 
\begin{itemize}
\item[ ]$\hspace{0.38cm}\mu_{H}\co ( H\ot \Pi_{H}^{L}) $

\item[ ]$=(\varepsilon_{H}\ot H)\co \delta_{H}\co \mu_{H}\co (
H\ot \Pi_{H}^{L})  $

\item[ ]$=(\varepsilon_{H}\ot H)\co \mu_{H\ot H}\co (\delta_{H}\ot \delta_{H})\co ( H\ot (((\varepsilon_{H}\co \mu_{H}\co c^{-1}_{H,H})\ot
H)\co (H\ot (\delta_{H}\co \eta_{H}))))$

\item[ ]$= ((((\varepsilon_{H}\co \mu_{H})\ot (\varepsilon_{H}\co
\mu_{H}))\co (H\ot (c_{H,H}^{-1}\co\delta_{H})\ot H))\ot \mu_{H})\co (H\ot H\ot c_{H,H}\ot H) \co (H\ot c_{H,H}\ot H\ot H) $
\item[ ]$\hspace{0.38cm}\co (\delta_{H}\ot c_{H,H}^{-1}\ot H)\co (H\ot H\ot (\delta_{H}\co \eta_{H}))$

\item[ ]$= ((\varepsilon_{H}\co \mu_{H}\co (\mu_{H}\ot H))\ot \mu_{H})\co (H\ot H\ot c_{H,H}\ot H)\co (H\ot c_{H,H}\ot c_{H,H})\co (\delta_{H}\ot (\delta_{H}\co \eta_{H})\ot H) $

\item[ ]$=((\varepsilon_{H}\co \mu_{H})\ot H)\co (H\ot c_{H,H})\co ((\mu_{H\ot H}\co (\delta_{H}\ot \delta_{H}))\ot H)\co 
(H\ot \eta_{H}\ot H)$

\item[ ]$=((\varepsilon_{H}\co\mu_{H})\ot H)\co (H\ot c_{H,H})\co (\delta_{H}\ot
H).$
\end{itemize}

In the last identities, the first one follows by the properties of the counit, the second one follows by (\ref{new-pi-1}) and (a1) of Definition  \ref{Weak-Hopf-quasigroup}. The third and the fifth ones rely on the naturality of $c$. The fourth equality is a consequence of (a2) of Definition  \ref{Weak-Hopf-quasigroup} and  finally the last one follows by (a1) of Definition  \ref{Weak-Hopf-quasigroup} and the properties of the unit. 

The proof for (\ref{mu-pi-r}) is similar but in the second step we must use (\ref{new-pi-2}) instead of (\ref{new-pi-1}).  To finish the proof we show that (\ref{mu-pi-l-var}) holds. The proof for (\ref{mu-pi-r-var}) is similar.

\begin{itemize}
\item[ ]$\hspace{0.38cm}\mu_{H}\co ( H\ot \overline{\Pi}_{H}^{L}) $

\item[ ]$=(\varepsilon_{H}\ot H)\co \delta_{H}\co \mu_{H}\co (
H\ot \overline{\Pi}_{H}^{L})  $

\item[ ]$=(\mu_{H}\ot (\varepsilon_{H}\co \mu_{H})\ot (\varepsilon_{H}\co \mu_{H}))\co (\delta_{H\ot H}\ot H\ot H)\co 
(H\ot (\delta_{H}\co \eta_{H})\ot H) $

\item[ ]$=  (\mu_{H}\ot (((\varepsilon_{H}\co \mu_{H})\ot (\varepsilon_{H}\co \mu_{H}))\co (H\ot \delta_{H}\ot H)))\co 
(\delta_{H\ot H}\ot H)\co (H\ot \eta_{H}\ot H)$

\item[ ]$= (\mu_{H}\ot (\varepsilon_{H}\co \mu_{H}\co (\mu_{H}\ot H)))\co (\delta_{H\ot H}\ot H)\co (H\ot \eta_{H}\ot H)$

\item[ ]$= (H\ot (\varepsilon_{H}\co \mu_{H}))\co (((\delta_{H}\co \mu_{H})\co (H\ot \eta_{H}))\ot H)$

\item[ ]$=(H\ot (\varepsilon_{H}\co
\mu_{H}))\co (\delta_{H}\ot H) .$
\end{itemize}
The first  equality follows by the counit properties, the second and the fifth ones by (a1) of Definition \ref{Weak-Hopf-quasigroup}, the third one follows from the coassociativity of $\delta_{H}$, the fourth one  by (a2)  of Definition \ref{Weak-Hopf-quasigroup}, and the sixth one by the properties of the unit.

\end{proof}

\begin{remark}
\label{Remark-mu-idemp}
Note that if we compose with $\varepsilon_{H}$ in the equalities (\ref{mu-pi-l}), (\ref{mu-pi-r}), (\ref{mu-pi-l-var}) and 
(\ref{mu-pi-r-var}) we obtain 
\begin{equation}
\label{mu-pi-l-varep}
\varepsilon_{H}\co \mu_{H}\co (H\ot \Pi_{H}^{L})=\varepsilon_{H}\co
\mu_{H},
\end{equation}
\begin{equation}
\label{mu-pi-r-varep}
\varepsilon_{H}\co\mu_{H}\co (\Pi_{H}^{R}\ot H)=\varepsilon_{H}\co \mu_{H},
\end{equation}
\begin{equation}
\label{mu-pi-l-var-varep}
\varepsilon_{H}\co\mu_{H}\co (H\ot \overline{\Pi}_{H}^{L})=\varepsilon_{H}\co
\mu_{H},
\end{equation}
\begin{equation}
\label{mu-pi-r-var-varep}
\varepsilon_{H}\co\mu_{H}\co (\overline{\Pi}_{H}^{R}\ot H)=\varepsilon_{H}\co
\mu_{H}.
\end{equation}

\end{remark}

\begin{proposition}
\label{delta-idemp}
Let $H$ be a weak Hopf quasigroup. The following identities hold:
\begin{equation}
\label{delta-pi-l}
 (H\ot \Pi_{H}^{L})\co \delta_{H}=(\mu_{H}\ot H)\co (H\ot c_{H,H})\co ((\delta_{H}\co \eta_{H})\ot
H),
\end{equation}
\begin{equation}
\label{delta-pi-r}
(\Pi_{H}^{R}\ot H)\co \delta_{H}=(H\ot \mu_{H})\co (c_{H,H}\ot H)\co
(H\ot (\delta_{H}\co \eta_{H})),
\end{equation}
\begin{equation}
\label{delta-pi-l-var}
(\overline{\Pi}_{H}^{L}\ot H)\co \delta_{H}=(H\ot 
\mu_{H})\co ((\delta_{H}\co \eta_{H})\ot H),
\end{equation}
\begin{equation}
\label{delta-pi-r-var}
 (H\ot \overline{\Pi}_{H}^{R})\co \delta_{H}=(
\mu_{H}\ot H)\co (H\ot (\delta_{H}\co \eta_{H})).
\end{equation}
\end{proposition}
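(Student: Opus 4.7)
The plan is to establish each of the four identities by mimicking, in dualized form, the corresponding identity from Proposition \ref{mu-idemp}. The underlying observation is that, among the axioms (a1)--(a3) of a weak Hopf quasigroup, (a1) is self-dual while (a2) and (a3) form a dual pair under the exchange $\mu_{H}\leftrightarrow\delta_{H}$, $\eta_{H}\leftrightarrow\varepsilon_{H}$. Consequently (\ref{delta-pi-l}), (\ref{delta-pi-r}), (\ref{delta-pi-l-var}), (\ref{delta-pi-r-var}) are the comultiplicative counterparts of (\ref{mu-pi-l}), (\ref{mu-pi-r}), (\ref{mu-pi-l-var}), (\ref{mu-pi-r-var}) respectively, and each should be provable by the same chain of rewrites with the roles of product/coproduct and unit/counit exchanged and (a2) replaced by (a3).

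I would start with (\ref{delta-pi-l-var}), which is the simplest case because no braiding is involved in the target morphism. The first step is to insert a redundant unit factor, writing $\delta_{H}=\delta_{H}\co\mu_{H}\co(\eta_{H}\ot H)$; this is the dual of the opening move $\mu_{H}=(\varepsilon_{H}\ot H)\co\delta_{H}\co\mu_{H}$ used in the proof of (\ref{mu-pi-l-var}). Next I expand $\delta_{H}\co\mu_{H}$ via (a1) into $(\mu_{H}\ot\mu_{H})\co\delta_{H\ot H}$ and substitute the definition of $\overline{\Pi}_{H}^{L}$. The crucial rearrangement is then done using (a3), which allows the replacement of two copies of $\delta_{H}\co\eta_{H}$ connected through a central $\mu_{H}$ by a triple coproduct of $\eta_{H}$; this is exactly the dualization of the step in the earlier proof where (a2) converted $\varepsilon_{H}\co\mu_{H}\co(\mu_{H}\ot H)$ into a middle $\delta_{H}$ flanked by two $\varepsilon_{H}\co\mu_{H}$. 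Coassociativity of $\delta_{H}$ and the counit and unit properties then collapse the expression to the RHS. The identity (\ref{delta-pi-r-var}) follows by a strictly symmetric argument.

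For (\ref{delta-pi-l}) and (\ref{delta-pi-r}) the pattern is the same, but the braiding in the definition of $\Pi_{H}^{L}$ and $\Pi_{H}^{R}$ must be transported through the argument. Here the appropriate device is to use the alternative expressions (\ref{new-pi-1}) and (\ref{new-pi-2}), which allow one to trade a $c_{H,H}$ inside $\Pi_{H}^{L}$ (or $\Pi_{H}^{R}$) for a $c_{H,H}^{-1}$ acting on $\delta_{H}\co\eta_{H}$, after which naturality of the braiding performs the necessary rebracketing; this is precisely the role played by (\ref{new-pi-1}) and (\ref{new-pi-2}) in the proofs of (\ref{mu-pi-l}) and (\ref{mu-pi-r}) in Proposition \ref{mu-idemp}.

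The main obstacle, as in the earlier proposition, is not conceptual but combinatorial: the manipulations take place on tensor products of four or five factors, and one must carefully track which slot each morphism acts on, being especially careful when moving $c_{H,H}$ past multiple tensor factors. Once the appropriate instance of (a1)/(a3)/coassociativity/(\ref{new-pi-1})--(\ref{new-pi-2}) is identified at each step, the verification is the exact dual of the calculation presented for Proposition \ref{mu-idemp}, and I would present the proof of (\ref{delta-pi-l-var}) in full (with the same step-by-step justification format used by the authors) and indicate that the remaining three identities follow analogously, leaving the symmetric cases to the reader.
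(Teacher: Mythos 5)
Your guiding principle --- that Proposition \ref{delta-idemp} is the formal dual of Proposition \ref{mu-idemp} and can therefore be obtained by dualizing its proof, exchanging $\mu_{H}\leftrightarrow\delta_{H}$, $\eta_{H}\leftrightarrow\varepsilon_{H}$ and (a2) for (a3) --- is unsound in this setting, because a weak Hopf quasigroup is not a self-dual structure: $(H,\eta_{H},\mu_{H})$ is only a unital magma, while $(H,\varepsilon_{H},\delta_{H})$ is a genuinely coassociative comonoid. Any step of the original proof that invokes coassociativity of $\delta_{H}$ dualizes to a step requiring associativity of $\mu_{H}$, which is not an axiom. This is not a hypothetical worry: the paper's proof of (\ref{mu-pi-l-var}) uses coassociativity of $\delta_{H}$ in an essential way, so its ``exact dual'' is not a valid derivation of (\ref{delta-pi-l-var}); the paper's actual argument for (\ref{delta-pi-l-var}) is restructured and uses \emph{both} (a2) and (a3) together with coassociativity.

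The gap is visible in the one case you work out in detail. After inserting the unit, applying (a1) and substituting the definition of $\overline{\Pi}_{H}^{L}$, the expression reached is
$$(H\ot (\varepsilon_{H}\co \mu_{H}\co (H\ot \mu_{H}))\ot \mu_{H})\co ((\delta_{H}\co \eta_{H})\ot (\delta_{H\ot H}\co (\eta_{H}\ot H))),$$
in which the two copies of $\delta_{H}\co\eta_{H}$ are \emph{not} joined by a single central $\mu_{H}$: one leg of the first copy is multiplied against the already-formed product of a leg of the second copy with a leg of $\delta_{H}$. Since $\mu_{H}$ need not be associative, (a3) cannot be applied at this stage, and no amount of coassociativity fixes the bracketing of a product. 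One must first use (a2) to rewrite $\varepsilon_{H}\co\mu_{H}\co(H\ot\mu_{H})$ as $((\varepsilon_{H}\co\mu_{H})\ot(\varepsilon_{H}\co\mu_{H}))\co(H\ot\delta_{H}\ot H)$, then invoke coassociativity of $\delta_{H}\co\eta_{H}$, and only then does (a3) collapse the unit legs; (a2) is absent from your list of tools for this identity, so the sketch as written cannot be completed. For (\ref{delta-pi-l}) and (\ref{delta-pi-r}) your heuristic is closer to viable, since the paper's proofs of (\ref{mu-pi-l}) and (\ref{mu-pi-r}) happen to avoid coassociativity; still, the paper's verification of (\ref{delta-pi-l}) ends up invoking (a2) and (a3) jointly, coassociativity, and the auxiliary naturality identity (\ref{aux-1}), so here too the argument must be carried out rather than transported by duality.
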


\begin{proof}
The proof for (\ref{delta-pi-l}) is the following:
\begin{itemize}
\item[ ]$\hspace{0.38cm} (\mu_{H}\ot H)\co (H\ot c_{H,H})\co ((\delta_{H}\co \eta_{H})\ot
H)$

\item[ ]$=  (((H\ot \varepsilon_{H})\co \delta_{H}\co\mu_{H})\ot H)\co (H\ot c_{H,H})\co ((\delta_{H}\co \eta_{H})\ot
H) $

\item[ ]$= (\mu_{H}\ot (\varepsilon_{H}\co \mu_{H})\ot H)\co (H\ot c_{H,H}\ot c_{H,H})\co (\delta_{H}\ot c_{H,H}\ot H)\co ((\delta_{H}\co \eta_{H})\ot \delta_{H})  $

\item[ ]$= (H\ot ((( \varepsilon_{H}\co \mu_{H})\ot H)\co (H\ot c_{H,H})\co (\delta_{H}\ot H)))\co  (( (\mu_{H}\ot H)\co (H\ot c_{H,H})\co ((\delta_{H}\co \eta_{H})\ot
H))\ot H)\co \delta_{H}$

\item[ ]$=((\mu_{H}\co c_{H,H}^{-1})\ot ((( \varepsilon_{H}\co \mu_{H})\ot H) \co (H\ot c_{H,H})))\co (H\ot ((H\ot \delta_{H})\co \delta_{H}\co \eta_{H})\ot H) \co \delta_{H}$

\item[ ]$=((\mu_{H}\co c_{H,H}^{-1})\ot ( \varepsilon_{H}\co \mu_{H})\ot H)\co (H\ot  H \ot (\mu_{H}\co c_{H,H}^{-1})\ot c_{H,H})\co (H\ot (\delta_{H}\co \eta_{H})\ot  (\delta_{H}\co \eta_{H})\ot H)\co \delta_{H}  $

\item[ ]$=(\mu_{H}\ot (((\varepsilon_{H}\co \mu_{H}\co (\mu_{H}\ot H))\ot H)\co (H\ot H\ot c_{H,H})\co (H\ot c_{H,H}\ot H)\co 
((\delta_{H}\co \eta_{H})\ot H\ot H)))  $
\item[ ]$\hspace{0.38cm}\co \delta_{H\ot H}\co (\eta_{H}\ot H) $

\item[ ]$=(\mu_{H}\ot (((\varepsilon_{H}\co \mu_{H}\co (H\ot \mu_{H}))\ot H)\co (H\ot H\ot c_{H,H})\co (H\ot c_{H,H}\ot H)\co 
((\delta_{H}\co \eta_{H})\ot H\ot H)))  $
\item[ ]$\hspace{0.38cm}\co \delta_{H\ot H}\co (\eta_{H}\ot H) $
\item[ ]$= (H\ot \Pi_{H}^{L})\co (\mu_{H}\ot \mu_{H})\co \delta_{H\ot H}\co (\eta_{H}\ot H)  $

\item[ ]$= (H\ot \Pi_{H}^{L})\co \delta_{H}. $

\end{itemize}

In the last equalities the first one follows by the counit properties, the second one by (a1) of Definition \ref{Weak-Hopf-quasigroup} and the naturality of $c$. In the third one we used the naturality of $c$ and the coassociativity of $\delta_{H}$. The fourth and the sixth ones are consequence of the equality 
\begin{equation}
\label{aux-1}
(\mu_{H}\ot H)\co (H\ot c_{H,H})\co ((\delta_{H}\co \eta_{H})\ot
H)=((\mu_{H}\co c_{H,H}^{-1})\ot H)\co (H\ot (\delta_{H}\co \eta_{H}))
\end{equation}
and the fifth one follows by (a3) of Definition \ref{Weak-Hopf-quasigroup}. The seventh one relies on (a2) of Definition \ref{Weak-Hopf-quasigroup}, the eight one follows by the naturality of $c$ and the last one by the unit properties.

The proof for (\ref{delta-pi-r}) is similar but using 
\begin{equation}
\label{aux-2}
(H\ot \mu_{H})\co (c_{H,H}\ot H)\co
(H\ot (\delta_{H}\co \eta_{H}))=(H\ot (\mu_{H}\co c_{H,H}^{-1}))\co ((\delta_{H}\co \eta_{H})\ot H)
\end{equation}
instead of (\ref{aux-1}). 

Moreover, (\ref{delta-pi-l-var}) holds because

\begin{itemize}
\item[ ]$\hspace{0.38cm} (\overline{\Pi}_{H}^{L}\ot H)\co \delta_{H}$

\item[ ]$= (\overline{\Pi}_{H}^{L}\ot H)\co \delta_{H}\co \mu_{H}\co (\eta_{H}\ot H) $

\item[ ]$= (\overline{\Pi}_{H}^{L}\ot H)\co (\mu_{H}\ot \mu_{H})\co \delta_{H\ot H}\co (\eta_{H}\ot H)$

\item[ ]$= (H\ot (\varepsilon_{H}\co \mu_{H}\co (H\ot \mu_{H}))\ot \mu_{H})\co ((\delta_{H}\co \eta_{H})\ot (\delta_{H\ot H}\co (\eta_{H}\ot H)))$

\item[ ]$= (H\ot ( ((\varepsilon_{H}\co \mu_{H})\ot (\varepsilon_{H}\co
\mu_{H}))\co (H\ot \delta_{H}\ot H))\ot \mu_{H})\co ((\delta_{H}\co \eta_{H})\ot (\delta_{H\ot H}\co (\eta_{H}\ot H)))$

\item[ ]$=(H\ot (\varepsilon_{H}\co \mu_{H})\ot ((\varepsilon_{H}\ot H)\co \delta_{H}\co \mu_{H}))\co ((\delta_{H}\co \eta_{H})\ot (\delta_{H}\co \eta_{H})\ot H) $

\item[ ]$= (H\ot \varepsilon_{H}\ot \mu_{H})\co (((\delta_{H}\ot H)\co \delta_{H}\co \eta_{H})\ot H) $

\item[ ]$=(H\ot \mu_{H})\co ((\delta_{H}\co \eta_{H})\ot H).   $

\end{itemize}

The first equality follows by the unit properties, the second one by (a1) of Definition \ref{Weak-Hopf-quasigroup} and the third one by the definition of $\overline{\Pi}_{H}^{L}$. The fourth equality relies on (a2) of Definition \ref{Weak-Hopf-quasigroup}. The fifth one is a consequence of the coassociativity of $\delta_{H}$ and the sixth one follows by (a3) of Definition \ref{Weak-Hopf-quasigroup}. Finally the last one holds by the properties of the counit.

The proof of (\ref{delta-pi-r-var}) is similar to the developed for (\ref{delta-pi-l-var}) and we leave it to the reader.

\end{proof}

\begin{remark}
\label{Remark-delta-idemp}
Note that if we compose with $\eta_{H}$ in the equalities (\ref{delta-pi-l}), (\ref{delta-pi-r}), (\ref{delta-pi-l-var}) and 
(\ref{delta-pi-r-var}) we obtain 
\begin{equation}
\label{delta-pi-l-eta}
 (H\ot \Pi_{H}^{L})\co \delta_{H}\co \eta_{H}=\delta_{H}\co \eta_{H},
\end{equation}
\begin{equation}
\label{delta-pi-r-eta}
(\Pi_{H}^{R}\ot H)\co \delta_{H}\co \eta_{H}=\delta_{H}\co \eta_{H},
\end{equation}
\begin{equation}
\label{delta-pi-l-var-eta}
(\overline{\Pi}_{H}^{L}\ot H)\co \delta_{H}\co \eta_{H}=\delta_{H}\co \eta_{H},
\end{equation}
\begin{equation}
\label{delta-pi-r-var-eta}
 (H\ot \overline{\Pi}_{H}^{R})\co \delta_{H}\co \eta_{H}=\delta_{H}\co \eta_{H}.
\end{equation}

\end{remark}

As a consequence of Propositions \ref{mu-idemp} and \ref{delta-idemp} we can get other useful identities.

\begin{proposition}
\label{pi-delta-mu-pi}
Let $H$ be a wek Hopf quasigroup. The following identities hold:
\begin{equation}
\label{pi-delta-mu-pi-1}
\Pi^{L}_{H}\circ \mu_{H}\circ (H\ot
\Pi^{L}_{H})=\Pi^{L}_{H}\circ \mu_{H},
\end{equation}
\begin{equation}
\label{pi-delta-mu-pi-2}
\Pi^{R}_{H}\circ
\mu_{H}\circ (\Pi^{R}_{H}\ot H)=\Pi^{R}_{H}\circ \mu_{H},
\end{equation}
\begin{equation}
\label{pi-delta-mu-pi-3}
(H\ot \Pi^{L}_{H})\circ \delta_{H}\circ
\Pi^{L}_{H}=\delta_{H}\circ \Pi^{L}_{H},
\end{equation}
\begin{equation}
\label{pi-delta-mu-pi-4}
( \Pi^{R}_{H}\ot
H)\circ \delta_{H}\circ \Pi^{R}_{H}=\delta_{H}\circ \Pi^{R}_{H}.
\end{equation}
\end{proposition}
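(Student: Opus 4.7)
The four identities naturally split into two dual pairs: (\ref{pi-delta-mu-pi-1})--(\ref{pi-delta-mu-pi-2}), which describe the interaction of $\Pi^{L/R}_H$ with $\mu_H$, and (\ref{pi-delta-mu-pi-3})--(\ref{pi-delta-mu-pi-4}), which describe the interaction with $\delta_H$. In each pair, the ``R'' identity follows from the ``L'' one by swapping the order of the tensor factors and invoking the corresponding right-handed versions of the axioms, so the main work is to establish (\ref{pi-delta-mu-pi-1}) and (\ref{pi-delta-mu-pi-3}).

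For (\ref{pi-delta-mu-pi-1}), the plan is to expand the outer $\Pi^L_H$ using its defining formula (a4-1), which reduces the problem to verifying that the scalar morphism $\varepsilon_H\co\mu_H\co(H\ot\mu_H\co(H\ot\Pi^L_H))$ (fed with $\delta_H\co\eta_H$ in the leftmost slot and tensored against $\eta_{H(2)}$) coincides with $\varepsilon_H\co\mu_H\co(H\ot\mu_H)$ fed with the same data. The key manoeuvre is: apply (a2) of Definition \ref{Weak-Hopf-quasigroup} to rewrite $\varepsilon_H\co\mu_H\co(H\ot\mu_H)$ as $\varepsilon_H\co\mu_H\co(\mu_H\ot H)$; then use (\ref{mu-pi-l-varep}) from Remark \ref{Remark-mu-idemp} to strip off the inner $\Pi^L_H$; then apply (a2) again to return to the right-associated form. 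Identity (\ref{pi-delta-mu-pi-2}) is proved by the analogous manoeuvre with $\Pi^R_H$, (a4-2), and (\ref{mu-pi-r-varep}).

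For (\ref{pi-delta-mu-pi-3}), the cleanest route is to apply the alternative presentation (\ref{delta-pi-l}) of $(H\ot\Pi^L_H)\co\delta_H$ with input $\Pi^L_H$: this expresses the left-hand side as $(\mu_H\co(H\ot\Pi^L_H)\ot H)\co(\delta_H\co\eta_H\ot H)\co[\,\hbox{some braiding data}\,]$ where the factor $\mu_H\co(H\ot\Pi^L_H)$ can then be simplified via (\ref{mu-pi-l}) from Proposition \ref{mu-idemp}. This converts the inner composition into a scalar of the form $\varepsilon_H\co\mu_H(\eta_{H(1)(1)}\ot H)$ paired against $\eta_{H(1)(2)}\ot\eta_{H(2)}$, at which point coassociativity of $\delta_H$ applied to $\delta_H\co\eta_H$ rearranges the three legs so that the expression becomes $\sum\varepsilon(\eta_{H(1)}\cdot h)\,\delta_H(\eta_{H(2)})$, i.e.\ precisely $\delta_H\co\Pi^L_H$. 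The symmetric version (\ref{pi-delta-mu-pi-4}) follows from (\ref{delta-pi-r}) and (\ref{mu-pi-r}).

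The main obstacle is simply careful bookkeeping: because $\mu_H$ is not associative and $\delta_H$ does not interact strictly with $\eta_H$, every rearrangement of products (resp.\ of coproducts of the unit) must be justified by (a2) composed on the left with $\varepsilon_H$ (resp.\ by (a3) and coassociativity of $\delta_H$). In particular, one cannot permute the inner $\Pi^L_H$ freely with $\mu_H$; all of the work is concentrated in the steps where we pass to $\varepsilon_H\co\mu_H\co\cdots$ (for the product-side identities) or to $\cdots\co\delta_H\co\eta_H$ (for the coproduct-side identities), since these are exactly the contexts in which the weak (co)associativity axioms apply and in which the identities of Remarks \ref{Remark-mu-idemp} and \ref{Remark-delta-idemp} can be invoked to absorb the extra projection.
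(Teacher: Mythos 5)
Your proposal is correct. For (\ref{pi-delta-mu-pi-3}) your route is exactly the paper's: expand $(H\ot \Pi^{L}_{H})\co \delta_{H}$ by (\ref{delta-pi-l}), slide the remaining $\Pi^{L}_{H}$ through the braiding, replace $\mu_{H}\co (H\ot \Pi^{L}_{H})$ using (\ref{mu-pi-l}), and finish with naturality of $c$ and coassociativity of $\delta_{H}$. For (\ref{pi-delta-mu-pi-1}) you take a slightly different (and marginally leaner) path: the paper first rewrites the inner $\mu_{H}\co (H\ot \Pi^{L}_{H})$ via (\ref{mu-pi-l}), then invokes (\ref{delta-pi-l}) and (a2) before reassembling $\Pi^{L}_{H}\co\mu_{H}$, whereas you expand the outer $\Pi^{L}_{H}$ via (a4-1), note that by naturality of the braiding $\Pi^{L}_{H}\co h$ depends on $h:H\ot H\rightarrow H$ only through $\varepsilon_{H}\co\mu_{H}\co (H\ot h)$, and then close the gap at the counit level by the chain (a2), (\ref{mu-pi-l-varep}), (a2). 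Both arguments hinge on the same ingredients --- the weak associativity axiom (a2) together with Propositions \ref{mu-idemp} and \ref{delta-idemp} and Remark \ref{Remark-mu-idemp} --- so the difference is one of bookkeeping rather than strategy; your version saves one appeal to (\ref{delta-pi-l}). Your reduction of the two R-identities to mirror arguments is also how the paper disposes of them, and the right-handed lemmas you would need ((a4-2), (\ref{mu-pi-r}), (\ref{delta-pi-r}), (\ref{mu-pi-r-varep})) are all available, so nothing is missing there.
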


\begin{proof}
The equality (\ref{pi-delta-mu-pi-1}) holds because:
\begin{itemize}
\item[ ]$\hspace{0.38cm}\Pi^{L}_{H}\circ \mu_{H}\circ (H\ot
\Pi^{L}_{H}) $

\item[ ]$=((\varepsilon_{H}\co
\mu_{H})\ot \Pi^{L}_{H})\co (H\ot c_{H,H})\co (\delta_{H}\ot
H)  $

\item[ ]$=((\varepsilon_{H}\co
\mu_{H})\ot H)\co (H\ot c_{H,H})\co (((H\ot \Pi^{L}_{H})\co\delta_{H})\ot
H) $

\item[ ]$=  ((\varepsilon_{H}\co \mu_{H}\co (\mu_{H}\ot H))\ot H)\co (H\ot H\ot \ot c_{H,H})\co (H\ot c_{H,H}\ot H)\co ((\delta_{H}\co \eta_{H})\ot H\ot H)$

\item[ ]$= ((\varepsilon_{H}\co \mu_{H}\co (H\ot \mu_{H}))\ot H)\co (H\ot H\ot \ot c_{H,H})\co (H\ot c_{H,H}\ot H)\co ((\delta_{H}\co \eta_{H})\ot H\ot H)$

\item[ ]$= \Pi^{L}_{H}\circ \mu_{H}. $

\end{itemize}

In the previous equalities, the first one follows by (\ref{mu-pi-l}), the second and the fifth ones by the naturality of $c$, the third one by (\ref{delta-pi-l}) and the fourth one by (a2) of Definition \ref{Weak-Hopf-quasigroup}. 

The proof for (\ref{pi-delta-mu-pi-1}) is similar. To finish we will show that (\ref{pi-delta-mu-pi-3}) holds (using the same reasoning we obtain (\ref{pi-delta-mu-pi-4})). Indeed: 

\begin{itemize}
\item[ ]$\hspace{0.38cm} (H\ot \Pi^{L}_{H})\circ \delta_{H}\circ
\Pi^{L}_{H}$

\item[ ]$=(\mu_{H}\ot H)\co (H\ot c_{H,H})\co ((\delta_{H}\co \eta_{H})\ot
\Pi^{L}_{H})  $

\item[ ]$=((\mu_{H}\co (H\ot \Pi^{L}_{H})) \ot H)\co (H\ot c_{H,H})\co ((\delta_{H}\co \eta_{H})\ot
H)  $

\item[ ]$= ((((\varepsilon_{H}\co
\mu_{H})\ot H)\co (H\ot c_{H,H})\co (\delta_{H}\ot
H))\ot H)\co (H\ot c_{H,H})\co ((\delta_{H}\co \eta_{H})\ot H) $

\item[ ]$=\delta_{H}\circ \Pi^{L}_{H}.$

\end{itemize}

The first and the third  equalities follow by (\ref{delta-pi-l}) and  the second one by the naturality of $c$. Finally, the last one relies on the naturality of $c$ and the coassociativity of $\delta_{H}$.

\end{proof}

\begin{remark}
\label{pi-delta-mu-pi-var}
By the equalities contained in Remark  \ref{Remark-delta-idemp} and using similar arguments to the ones 
utilized in the previous Proposition we have that 
\begin{equation}
\label{pi-delta-mu-pi-1-var}
\overline{\Pi}^{L}_{H}\circ \mu_{H}\circ (H\ot
\Pi^{L}_{H})=\overline{\Pi}^{L}_{H}\circ \mu_{H},
\end{equation}
\begin{equation}
\label{pi-delta-mu-pi-2-var}
\overline{\Pi}^{R}_{H}\circ
\mu_{H}\circ (\Pi^{R}_{H}\ot H)=\overline{\Pi}^{R}_{H}\circ \mu_{H},
\end{equation}
\begin{equation}
\label{pi-delta-mu-pi-3-var}
(\Pi^{R}_{H}\ot H)\circ \delta_{H}\circ
\overline{\Pi}^{L}_{H}=\delta_{H}\circ \overline{\Pi}^{L}_{H},
\end{equation}
\begin{equation}
\label{pi-delta-mu-pi-4-var}
( H\ot \Pi^{L}_{H})\circ \delta_{H}\circ \overline{\Pi}^{R}_{H}=\delta_{H}\circ \overline{\Pi}^{R}_{H}.
\end{equation}
\end{remark}

\begin{proposition}
\label{pi-composition}
Let $H$ be a weak Hopf quasigroup. The following identities hold:
\begin{equation}
\label{pi-composition-1}
\Pi_{H}^{L}\co
\overline{\Pi}_{H}^{L}=\Pi_{H}^{L},\;\;\;\Pi_{H}^{L}\co
\overline{\Pi}_{H}^{R}=\overline{\Pi}_{H}^{R},
\end{equation}
\begin{equation}
\label{pi-composition-2}
\overline{\Pi}_{H}^{L}\co
\Pi_{H}^{L}=\overline{\Pi}_{H}^{L},\;\;\;\overline{\Pi}_{H}^{R}\co
\Pi_{H}^{L}=\Pi_{H}^{L},
\end{equation}
\begin{equation}
\label{pi-composition-3}
\Pi_{H}^{R}\co
\overline{\Pi}_{H}^{L}=\overline{\Pi}_{H}^{L},\;\;\;
\Pi_{H}^{R}\co
\overline{\Pi}_{H}^{R}=\Pi_{H}^{R},
\end{equation}
\begin{equation}
\label{pi-composition-4}
\overline{\Pi}_{H}^{L}\co
\Pi_{H}^{R}=\Pi_{H}^{R},\;\;\; \overline{\Pi}_{H}^{R}\co
\Pi_{H}^{R}=\overline{\Pi}_{H}^{R}.
\end{equation}

\end{proposition}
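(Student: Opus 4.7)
The eight identities in Proposition \ref{pi-composition} split naturally into two groups of four according to whether the composition equals the outer or the inner factor. All admit essentially the same three-step proof: unfold one of the projections using its defining formula (exposing either a $(\delta_{H}\co \eta_{H})$ or a $(\varepsilon_{H}\co \mu_{H})$ factor), absorb or insert the other projection via an idempotency identity from Remark \ref{Remark-mu-idemp} or Remark \ref{Remark-delta-idemp}, and, when necessary, use naturality of $c$ to slide the projection past a braiding.

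For the four identities where the composition equals the outer factor, namely $\Pi_{H}^{L}\co \overline{\Pi}_{H}^{L}=\Pi_{H}^{L}$ and $\overline{\Pi}_{H}^{R}\co \Pi_{H}^{R}=\overline{\Pi}_{H}^{R}$ together with their two $L\leftrightarrow R$ analogues, I would unfold the outer factor with its defining formula and use naturality of $c$ to bring the inner projection into the slot acted on by $(\varepsilon_{H}\co \mu_{H})$, where it is then absorbed via one of (\ref{mu-pi-l-varep})--(\ref{mu-pi-r-var-varep}). For example, $\overline{\Pi}_{H}^{L}\co \Pi_{H}^{L}=\overline{\Pi}_{H}^{L}$ follows by composing the definition of $\overline{\Pi}_{H}^{L}$ with $\Pi_{H}^{L}$ on the right to obtain $(H\ot (\varepsilon_{H}\co \mu_{H}\co (H\ot \Pi_{H}^{L})))\co ((\delta_{H}\co \eta_{H})\ot H)$, which collapses to $\overline{\Pi}_{H}^{L}$ by (\ref{mu-pi-l-varep}); the analogous case $\Pi_{H}^{L}\co \overline{\Pi}_{H}^{L}=\Pi_{H}^{L}$ needs one extra application of naturality of $c$ to move $\overline{\Pi}_{H}^{L}$ past $c_{H,H}$ before invoking (\ref{mu-pi-l-var-varep}).

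For the four identities where the composition equals the inner factor, namely $\Pi_{H}^{L}\co \overline{\Pi}_{H}^{R}=\overline{\Pi}_{H}^{R}$, $\overline{\Pi}_{H}^{R}\co \Pi_{H}^{L}=\Pi_{H}^{L}$ and their two $L\leftrightarrow R$ analogues, I would instead start with the defining formula of the inner factor and rewrite its $(\delta_{H}\co\eta_{H})$ factor using the appropriate identity from (\ref{delta-pi-l-eta})--(\ref{delta-pi-r-var-eta}) so as to insert the outer projection, then use naturality of $c$ to pull it out to the leftmost position. For instance, to prove $\overline{\Pi}_{H}^{R}\co \Pi_{H}^{L}=\Pi_{H}^{L}$ I would replace $\delta_{H}\co\eta_{H}$ in the definition of $\Pi_{H}^{L}$ by $(H\ot \overline{\Pi}_{H}^{R})\co \delta_{H}\co\eta_{H}$ using (\ref{delta-pi-r-var-eta}), and then slide $\overline{\Pi}_{H}^{R}$ past $c_{H,H}$ by naturality to recognize $\overline{\Pi}_{H}^{R}\co \Pi_{H}^{L}$ on the left-hand side. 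The main obstacle is simply bookkeeping: matching each of the eight identities to its correct companion in Remark \ref{Remark-mu-idemp} or Remark \ref{Remark-delta-idemp} and choosing which side to open up; each individual proof is then only a handful of lines of elementary manipulation.
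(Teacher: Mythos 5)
Your proof is correct, and all the ingredients you invoke --- the absorption identities (\ref{mu-pi-l-varep})--(\ref{mu-pi-r-var-varep}) of Remark \ref{Remark-mu-idemp} and the insertion identities (\ref{delta-pi-l-eta})--(\ref{delta-pi-r-var-eta}) of Remark \ref{Remark-delta-idemp} --- are established before Proposition \ref{pi-composition}, so there is no circularity; I checked the pairing of each of the eight identities with its companion lemma and the naturality steps all go through. Your route is, however, not the paper's. The paper verifies only the four identities in (\ref{pi-composition-1}) and (\ref{pi-composition-2}) and declares the rest analogous; of those four, it proves $\Pi_{H}^{L}\co\overline{\Pi}_{H}^{L}=\Pi_{H}^{L}$ and $\overline{\Pi}_{H}^{R}\co\Pi_{H}^{L}=\Pi_{H}^{L}$ by direct computation from (\ref{new-pi-1}) together with axioms (a2) and (a3) respectively (collapsing the two copies of $\varepsilon_{H}\co\mu_{H}$, resp.\ of $\delta_{H}\co\eta_{H}$), and it obtains $\Pi_{H}^{L}\co\overline{\Pi}_{H}^{R}=\overline{\Pi}_{H}^{R}$ and $\overline{\Pi}_{H}^{L}\co\Pi_{H}^{L}=\overline{\Pi}_{H}^{L}$ by composing (\ref{pi-delta-mu-pi-4-var}) with $\varepsilon_{H}\ot H$ and (\ref{pi-delta-mu-pi-1-var}) with $\eta_{H}\ot H$. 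Your scheme replaces all of this with a single uniform two-case mechanism: absorb the inner projection under $\varepsilon_{H}\co\mu_{H}$ for the ``outer'' identities, insert the outer projection into $\delta_{H}\co\eta_{H}$ for the ``inner'' ones, sliding past the braiding by naturality where needed. What this buys is uniformity (one recipe covers all eight cases explicitly, rather than four plus ``similar'') and self-containedness, since you bypass Remark \ref{pi-delta-mu-pi-var}, whose own justification in the paper is only sketched. What the paper's direct computations buy is independence from Propositions \ref{mu-idemp} and \ref{delta-idemp} for two of the identities, at the cost of re-deriving special cases of facts those propositions already encapsulate. The mathematical content is ultimately the same --- both reduce to axioms (a1)--(a3) --- but your organization is the cleaner of the two.
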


\begin{proof}
We only check (\ref{pi-composition-1}) and (\ref{pi-composition-2}). The proof for the other equalities
can be verified in a similar way. Taking into account the equalities (\ref{new-pi-1}) and (a2) of Definition \ref{Weak-Hopf-quasigroup} we have

\begin{itemize}
\item[ ]$\hspace{0.38cm}\Pi_{H}^{L}\co \overline{\Pi}_{H}^{L} $

\item[ ]$=(H\ot (\varepsilon_{H}\co \mu_{H})\ot
(\varepsilon_{H}\co \mu_{H}))\co ((c^{-1}_{H,H}\co
\delta_{H}\co \eta_{H})\ot (\delta_{H}\co \eta_{H})\ot H)  $

\item[ ]$=(H\ot (\varepsilon_{H}\co \mu_{H}))\co
((c^{-1}_{H,H}\co \delta_{H}\co \eta_{H})\ot (\mu_{H}\co
(\eta_{H}\ot H)))   $

\item[ ]$=\Pi_{H}^{L}, $
\end{itemize}

and  composing with $\varepsilon_{H}\ot H$ in (\ref{pi-delta-mu-pi-4-var}) we obtain 
 $\Pi_{H}^{L}\co \overline{\Pi}_{H}^{R}=\overline{\Pi}_{H}^{R}.$ Also, composing with $\eta_{H}\ot H$ in (\ref{pi-delta-mu-pi-1-var}) we have the equality $\overline{\Pi}_{H}^{L} \co \Pi_{H}^{L} =\overline{\Pi}_{H}^{L}. $

Finally, by the usual arguments

\begin{itemize}
\item[ ]$\hspace{0.38cm} \overline{\Pi}_{H}^{R}\co \Pi_{H}^{L} $

\item[ ]$=((\varepsilon_{H}\co \mu_{H}\co c^{-1}_{H,H})\ot
(\varepsilon_{H}\co \mu_{H})\ot H)\co (H\ot (\delta_{H}\co
\eta_{H})\ot (\delta_{H}\co \eta_{H}))$

\item[ ]$=((\varepsilon_{H}\co \mu_{H}\co c^{-1}_{H,H})\ot
((\varepsilon_{H}\ot H)\co \delta_{H}))\co (H\ot (\delta_{H}\co
\eta_{H})) $

\item[ ]$=\Pi_{H}^{L}. $
\end{itemize}

\end{proof}

\begin{proposition}
\label{pi-antipode-composition}
Let $H$ be a weak Hopf quasigroup. The following identities hold: 
\begin{equation}
\label{pi-antipode-composition-1}
\Pi_{H}^{L}\co \lambda_{H}=\Pi_{H}^{L}\co
\Pi_{H}^{R}= \lambda_{H}\co \Pi_{H}^{R},
\end{equation}
\begin{equation}
\label{pi-antipode-composition-2}
\Pi_{H}^{R}\co
\lambda_{H}=\Pi_{H}^{R}\co \Pi_{H}^{L}= \lambda_{H}\co
\Pi_{H}^{L},
\end{equation}
\begin{equation}
\label{pi-antipode-composition-3}
\Pi_{H}^{L}=\overline{\Pi}_{H}^{R}\co
\lambda_{H}=\lambda_{H}
\co\overline{\Pi}_{H}^{L},
\end{equation}
\begin{equation}
\label{pi-antipode-composition-4}
\Pi_{H}^{R}=
\overline{\Pi}_{H}^{L}\co \lambda_{H}=\lambda_{H} \co
\overline{\Pi}_{H}^{R}.
\end{equation}

\end{proposition}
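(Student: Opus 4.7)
The plan is to prove the four groups of identities in the order in which they are stated, using as sole ingredients the convolution characterisations $\lambda_{H}\ast \Pi_{H}^{L}=\Pi_{H}^{R}\ast \lambda_{H}=\lambda_{H}$ from (a4-3), the absorption rules of Proposition \ref{pi-delta-mu-pi}, and the composition table of Proposition \ref{pi-composition}. The guiding idea is that whenever $\lambda_{H}$ is composed with an idempotent $\Pi$-morphism, I substitute $\lambda_{H}$ by the convolution form in which the matching idempotent appears, and then absorb that idempotent via the appropriate identity of Proposition \ref{pi-delta-mu-pi}.

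For (\ref{pi-antipode-composition-1}) I first treat $\Pi_{H}^{L}\co \Pi_{H}^{R}=\lambda_{H}\co \Pi_{H}^{R}$: expanding $\Pi_{H}^{L}=\mu_{H}\co (H\ot \lambda_{H})\co \delta_{H}$ and using (\ref{pi-delta-mu-pi-4}) to replace $\delta_{H}\co \Pi_{H}^{R}$ by $(\Pi_{H}^{R}\ot H)\co \delta_{H}\co \Pi_{H}^{R}$, the expression becomes $(\Pi_{H}^{R}\ast \lambda_{H})\co \Pi_{H}^{R}=\lambda_{H}\co \Pi_{H}^{R}$ by (a4-3). For the remaining equality $\Pi_{H}^{L}\co \lambda_{H}=\Pi_{H}^{L}\co \Pi_{H}^{R}$ I substitute $\lambda_{H}=\lambda_{H}\ast \Pi_{H}^{L}$ from (a4-3), obtaining $\Pi_{H}^{L}\co \mu_{H}\co (\lambda_{H}\ot \Pi_{H}^{L})\co \delta_{H}$, and absorb the inner $\Pi_{H}^{L}$ through (\ref{pi-delta-mu-pi-1}) to leave $\Pi_{H}^{L}\co (\lambda_{H}\ast id_{H})=\Pi_{H}^{L}\co \Pi_{H}^{R}$. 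The three equalities of (\ref{pi-antipode-composition-2}) follow by the mirror argument, appealing instead to (\ref{pi-delta-mu-pi-3}) and (\ref{pi-delta-mu-pi-2}).

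The proofs of (\ref{pi-antipode-composition-3}) and (\ref{pi-antipode-composition-4}) are purely formal consequences of the previous step combined with Proposition \ref{pi-composition}. The template is: insert an idempotent $\Pi_{H}^{?}$ adjacent to $\lambda_{H}$ via one of the identities of Proposition \ref{pi-composition}, rewrite the resulting $\Pi_{H}^{?}\co \lambda_{H}$ or $\lambda_{H}\co \Pi_{H}^{?}$ through (\ref{pi-antipode-composition-1})-(\ref{pi-antipode-composition-2}), and cancel the remaining idempotents with a second application of Proposition \ref{pi-composition}. For instance, $\overline{\Pi}_{H}^{R}\co \lambda_{H}=\overline{\Pi}_{H}^{R}\co \Pi_{H}^{R}\co \lambda_{H}=\overline{\Pi}_{H}^{R}\co \Pi_{H}^{R}\co \Pi_{H}^{L}=\overline{\Pi}_{H}^{R}\co \Pi_{H}^{L}=\Pi_{H}^{L}$ by (\ref{pi-composition-4}), (\ref{pi-antipode-composition-2}), (\ref{pi-composition-4}) again, and (\ref{pi-composition-2}); dually, $\lambda_{H}\co \overline{\Pi}_{H}^{L}=\lambda_{H}\co \Pi_{H}^{R}\co \overline{\Pi}_{H}^{L}=\Pi_{H}^{L}\co \Pi_{H}^{R}\co \overline{\Pi}_{H}^{L}=\Pi_{H}^{L}\co \overline{\Pi}_{H}^{L}=\Pi_{H}^{L}$ by (\ref{pi-composition-3}), (\ref{pi-antipode-composition-1}), (\ref{pi-composition-3}) again, and (\ref{pi-composition-1}). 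The identities in (\ref{pi-antipode-composition-4}) come out from the same manipulations after the exchange $L\leftrightarrow R$. I do not expect any serious obstacle beyond the bookkeeping of choosing which convolution form of $\lambda_{H}$ to substitute, so that the idempotent to be absorbed lands on the tensor factor where one of the identities of Proposition \ref{pi-delta-mu-pi} applies.
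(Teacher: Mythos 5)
Your proof is correct, and for the first two identities it coincides with the paper's argument: the paper likewise obtains $\Pi_{H}^{L}\co \lambda_{H}=\Pi_{H}^{L}\co(\lambda_{H}\ast\Pi_{H}^{L})=\Pi_{H}^{L}\co\Pi_{H}^{R}$ via (\ref{pi-delta-mu-pi-1}) and $\lambda_{H}\co\Pi_{H}^{R}=(\Pi_{H}^{R}\ast\lambda_{H})\co\Pi_{H}^{R}=\Pi_{H}^{L}\co\Pi_{H}^{R}$ via (\ref{pi-delta-mu-pi-4}), with the mirror argument for (\ref{pi-antipode-composition-2}). Where you diverge is in (\ref{pi-antipode-composition-3}) and (\ref{pi-antipode-composition-4}): the paper performs one more round of convolution absorption, using the identities of Remark \ref{pi-delta-mu-pi-var} (specifically (\ref{pi-delta-mu-pi-2-var}) and (\ref{pi-delta-mu-pi-3-var})) to reduce $\overline{\Pi}_{H}^{R}\co(\Pi_{H}^{R}\ast\lambda_{H})$ and $(\Pi_{H}^{R}\ast\lambda_{H})\co\overline{\Pi}_{H}^{L}$ directly, and then applies Proposition \ref{pi-composition} once. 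You instead treat (\ref{pi-antipode-composition-3})--(\ref{pi-antipode-composition-4}) as purely formal consequences of (\ref{pi-antipode-composition-1})--(\ref{pi-antipode-composition-2}) together with the composition table: insert the idempotent with Proposition \ref{pi-composition}, rewrite $\Pi_{H}^{?}\co\lambda_{H}$ or $\lambda_{H}\co\Pi_{H}^{?}$ by the already-proved identities, and cancel. Both routes are valid; yours has the mild advantage of not invoking Remark \ref{pi-delta-mu-pi-var} at all for the barred identities, at the cost of one extra insertion/cancellation step, and it makes transparent that the last two groups of equalities carry no new analytic content beyond the first two plus the composition table.
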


\begin{proof}
As in the previous Proposition, it is sufficient to check  (\ref{pi-antipode-composition-1}) and (\ref{pi-antipode-composition-3}). The proof for the other equalities can be verified in a similar way.

The equalities of  (\ref{pi-antipode-composition-1}) hold because by (a4-3) of Definition \ref{Weak-Hopf-quasigroup} and 
(\ref{pi-delta-mu-pi-1}) we have 
$$\Pi_{H}^{L}\co \lambda_{H}=\Pi_{H}^{L}\co (\lambda_{H}\ast \Pi_{H}^{L})=\Pi_{H}^{L}\co
\Pi_{H}^{R}$$
and by (\ref{pi-delta-mu-pi-4})
$$\lambda_{H}\co \Pi_{H}^{R}=(\Pi_{H}^{R}\ast \lambda_{H})\co \Pi_{H}^{R}=\Pi_{H}^{L}\co
\Pi_{H}^{R}.$$

On the other hand, by  (a4-3) of Definition \ref{Weak-Hopf-quasigroup}, (\ref{pi-delta-mu-pi-2-var}) and (\ref{pi-composition-2}) we obtain 
$$\overline{\Pi}_{H}^{R}\co
\lambda_{H}=\overline{\Pi}_{H}^{R}\co (\Pi_{H}^{R}\ast \lambda_{H})=\overline{\Pi}_{H}^{R}\co \Pi_{H}^{L}=\Pi_{H}^{L}.
$$
 Moreover, by (\ref{pi-delta-mu-pi-3-var}) and (\ref{pi-composition-1})
$$\lambda_{H}\co \overline{\Pi}_{H}^{L}=(\Pi_{H}^{R}\ast \lambda_{H})\co \overline{\Pi}_{H}^{L}=\Pi_{H}^{L}\co \overline{\Pi}_{H}^{L}=\Pi_{H}^{L},$$
and then (\ref{pi-antipode-composition-3}) holds.
\end{proof}

\begin{proposition}
\label{magma-comonad} Let $H$ be a weak  Hopf quasigroup. Put $H_{L}=Im(\Pi_{H}^{L})$ and let
$p_{L}:H\rightarrow H_{L}$ and $i_{L}:H_{L}\rightarrow H$ be the
morphisms such that $\Pi_{H}^{L}=i_{L}\co p_{L}$ and $p_{L}\co
i_{L}=id_{H_{L}}$. Then,
$$
\setlength{\unitlength}{3mm}
\begin{picture}(30,4)
\put(3,2){\vector(1,0){4}} \put(11,2.5){\vector(1,0){10}}
\put(11,1.5){\vector(1,0){10}} \put(1,2){\makebox(0,0){$H_{L}$}}
\put(9,2){\makebox(0,0){$H$}} \put(24,2){\makebox(0,0){$H\ot H$}}
\put(5.5,3){\makebox(0,0){$i_{L}$}} \put(16,3.5){\makebox(0,0){$
\delta_{H}$}} \put(16,0.5){\makebox(0,0){$(H\ot \Pi_{H}^{L}) \co
\delta_{H}$}}
\end{picture}
$$
is an equalizer diagram  and
$$
\setlength{\unitlength}{1mm}
\begin{picture}(101.00,10.00)
\put(20.00,8.00){\vector(1,0){25.00}}
\put(20.00,4.00){\vector(1,0){25.00}}
\put(55.00,6.00){\vector(1,0){21.00}}
\put(32.00,11.00){\makebox(0,0)[cc]{$\mu_{H}$ }}
\put(33.00,0.00){\makebox(0,0)[cc]{$\mu_{H}\co (H\ot \Pi_{H}^{L})
$ }} \put(65.00,9.00){\makebox(0,0)[cc]{$p_{L}$ }}
\put(13.00,6.00){\makebox(0,0)[cc]{$ H\otimes H$ }}
\put(50.00,6.00){\makebox(0,0)[cc]{$ H$ }}
\put(83.00,6.00){\makebox(0,0)[cc]{$H_{L} $ }}
\end{picture}
$$
is a coequalizer diagram. As a consequence, $(H_{L},
\eta_{H_{L}}=p_{L}\co \eta_{H}, \mu_{H_{L}}=p_{L}\co \mu_{H}\co
(i_{L}\ot i_{L}))$ is a unital magma in ${\mathcal C}$ and $(H_{L},
\varepsilon_{H_{L}}=\varepsilon_{H}\co i_{L}, \delta_{H}=(p_{L}\ot
p_{L})\co \delta_{H}\co i_{L})$ is a comonoid in ${\mathcal C}.$

\end{proposition}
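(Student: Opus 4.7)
The statement breaks into three parts: the equalizer property, the coequalizer property, and the induced magma and comonoid structure on $H_{L}$. My plan is to handle the first two using standard idempotent-splitting arguments, and to verify the algebraic axioms on $H_{L}$ by direct calculation with the identities from the preceding propositions. The main difficulty will be coassociativity of $\delta_{H_{L}}$.

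For the equalizer, the fork condition $\delta_{H}\co i_{L}=(H\ot \Pi_{H}^{L})\co \delta_{H}\co i_{L}$ follows from $\Pi_{H}^{L}\co i_{L}=i_{L}$ (a direct consequence of $p_{L}\co i_{L}=id_{H_{L}}$) combined with (\ref{pi-delta-mu-pi-3}). For the universal property, given $g:X\to H$ with $\delta_{H}\co g=(H\ot \Pi_{H}^{L})\co \delta_{H}\co g$, I would apply $\varepsilon_{H}\ot H$ to both sides: the counit axiom turns the left side into $g$, while the right side becomes $\Pi_{H}^{L}\co g$ (since $(\varepsilon_{H}\ot \Pi_{H}^{L})=\Pi_{H}^{L}\co(\varepsilon_{H}\ot H)$); hence $g=\Pi_{H}^{L}\co g=i_{L}\co(p_{L}\co g)$, with uniqueness because $i_{L}$ is a split monomorphism. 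The coequalizer is dual: the fork follows from (\ref{pi-delta-mu-pi-1}) after cancelling $i_{L}$, and for the universal property I would compose any coequalizing $f:H\to Y$ with $\eta_{H}\ot H$ and use the unit axiom together with $\mu_{H}\co(\eta_{H}\ot\Pi_{H}^{L})=\Pi_{H}^{L}$ to deduce $f=f\co\Pi_{H}^{L}=(f\co i_{L})\co p_{L}$.

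The unital magma axioms for $H_{L}$ are then routine: for instance $\mu_{H_{L}}\co(\eta_{H_{L}}\ot id_{H_{L}})$ unfolds to $p_{L}\co\mu_{H}\co(\Pi_{H}^{L}\co\eta_{H}\ot i_{L})=p_{L}\co\mu_{H}\co(\eta_{H}\ot i_{L})=p_{L}\co i_{L}=id_{H_{L}}$, using (\ref{pi-eta}) and the unital magma axioms of $H$; the other unit axiom is symmetric. The counit axioms for $\delta_{H_{L}}$ are handled analogously, using (\ref{pi-varep}) and the counit axioms of $H$.

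The main obstacle is coassociativity of $\delta_{H_{L}}=(p_{L}\ot p_{L})\co\delta_{H}\co i_{L}$. Unfolding each side and repeatedly using $p_{L}\co\Pi_{H}^{L}=p_{L}$, identity (\ref{pi-delta-mu-pi-3}), the equalizer fork, and coassociativity of $\delta_{H}$, one can reduce $(id_{H_{L}}\ot \delta_{H_{L}})\co \delta_{H_{L}}$ to $(p_{L}\ot p_{L}\ot p_{L})\co(\delta_{H}\ot H)\co\delta_{H}\co i_{L}$, while $(\delta_{H_{L}}\ot id_{H_{L}})\co \delta_{H_{L}}$ becomes the same expression with an extra $\Pi_{H}^{L}\ot H$ inserted before $\delta_{H}\co i_{L}$. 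Matching these reduces to the ``left analogue'' of (\ref{pi-delta-mu-pi-3}), namely $(\Pi_{H}^{L}\ot H)\co\delta_{H}\co\Pi_{H}^{L}=\delta_{H}\co\Pi_{H}^{L}$, expressing that the first tensor factor of $\delta_{H}\co\Pi_{H}^{L}$ already lies in $H_{L}$. I would prove this by rewriting $\Pi_{H}^{L}$ as the convolution $id_{H}\ast\lambda_{H}$, expanding $\delta_{H}\co\mu_{H}$ via axiom (a1), and then collapsing the resulting triple coproduct using axiom (a3) together with (\ref{pi-delta-mu-pi-1}). Once this left analogue is established, coassociativity of $\delta_{H_{L}}$ follows directly.
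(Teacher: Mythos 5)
Your treatment of the equalizer and coequalizer diagrams, and of the unit and counit axioms for $H_{L}$, is correct and coincides with the paper's own argument (the paper composes $i_{L}$ with (\ref{pi-delta-mu-pi-3}) for the fork and recovers $t=\Pi_{H}^{L}\co t$ for the universal property exactly as you do by applying $\varepsilon_{H}\ot H$; the coequalizer is handled dually via (\ref{pi-delta-mu-pi-1})). The genuine gap is in your proof of coassociativity of $\delta_{H_{L}}$: the ``left analogue'' of (\ref{pi-delta-mu-pi-3}) that you take as the key lemma, namely $(\Pi_{H}^{L}\ot H)\co\delta_{H}\co\Pi_{H}^{L}=\delta_{H}\co\Pi_{H}^{L}$, is false in general, so the derivation you sketch for it cannot go through. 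Composing it with $\eta_{H}$ and using (\ref{pi-eta}) it would force $(\Pi_{H}^{L}\ot H)\co\delta_{H}\co\eta_{H}=\delta_{H}\co\eta_{H}$; note that Remark \ref{Remark-delta-idemp} only gives this with $\Pi_{H}^{R}$ or $\overline{\Pi}_{H}^{L}$ acting on the first leg. Concretely, for the weak Hopf algebra $H={\Bbb F}^{G}$ of functions on a finite groupoid $G$ (a weak Hopf quasigroup by the remark after Definition \ref{Weak-Hopf-quasigroup}), $\delta_{H}\co\eta_{H}$ is the sum of $\delta_{a}\ot\delta_{b}$ over composable pairs $s(a)=t(b)$, while applying $\Pi_{H}^{L}$ to the first leg replaces that condition by $t(a)=t(b)$; these differ as soon as $G$ has an arrow between two distinct objects. (Your identity does hold in Example \ref{main-example}, which may be what misled you.)

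The conclusion is nevertheless true, and your reduction almost works: since both sides of coassociativity carry the prefix $p_{L}\ot p_{L}\ot p_{L}$, all you actually need is the weaker identity $(\Pi_{H}^{L}\ot\Pi_{H}^{L})\co\delta_{H}\co\Pi_{H}^{L}=(\Pi_{H}^{L}\ot\Pi_{H}^{L})\co\delta_{H}$, equivalently $\delta_{H_{L}}\co p_{L}=(p_{L}\ot p_{L})\co\delta_{H}$, after which coassociativity of $\delta_{H_{L}}$ follows from that of $\delta_{H}$ because $p_{L}$ is a split epimorphism. This weaker identity is provable: by (\ref{delta-pi-l}) and the naturality of $c$ one gets $(H\ot\Pi_{H}^{L})\co\delta_{H}\co\Pi_{H}^{L}=((\mu_{H}\co(H\ot\Pi_{H}^{L}))\ot H)\co(H\ot c_{H,H})\co((\delta_{H}\co\eta_{H})\ot H)$, and applying $\Pi_{H}^{L}$ to the first leg and invoking (\ref{pi-delta-mu-pi-1}) removes the inner $\Pi_{H}^{L}$, which by (\ref{delta-pi-l}) again yields $(\Pi_{H}^{L}\ot\Pi_{H}^{L})\co\delta_{H}$. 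With this substitution your argument closes; the paper itself leaves the comonoid verification to the reader, so this is precisely the point that needed care.
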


\begin{proof}
 Composing with $i_{L}$ in the  equality (\ref{pi-delta-mu-pi-3}), we have that $i_{L}$ equalizes
$\delta_{H}$ and $(H\ot \Pi_{H}^{L}) \co \delta_{H}$. Now, let
$t:B\rightarrow H$ be a morphism such that $(H\ot \Pi_{H}^{L}) \co
\delta_{H}\co t= \delta_{H}\co t$. If $v=p_{L} \co t$, since
$\Pi_{H   }^{L}\co t=t$  we have $i_{L}\co v=t$. Trivially the
morphism $v$ is unique and therefore, the diagram is an equalizer
diagram. In a similar way we can prove that the second diagram is
a coaqualizer diagram using (\ref{pi-delta-mu-pi-1}) instead of (\ref{pi-delta-mu-pi-3}). Finally, note that the morphisms
$\eta_{H_{L}}$ and $\mu_{H_{L}}$ are the factorizations, through
the equalizer $i_{L}$, of the morphisms $\eta_{H}$ and $\mu_{H}\co
(i_{L}\ot i_{L})$ and then it is an easy exercise to show that
$(H_{L}, \eta_{H_{L}}, \mu_{H_{L}})$ is a unital magma in ${\mathcal
C}.$ The proof for the comonoid structure it is similar and we
leave it to the reader.
\end{proof}

\begin{example}
If $H$ is the weak Hopf quasigroup defined in Example \ref{main-example} note that 
$H_{L}=\langle [1_{x}], \; x\in {\mathcal B}_{0}\rangle$. Then, in this case we have that $H_{L}$ is a monoid because its induced product $\mu_{H_{L}}$ is associative because $[1_{x}].([1_{y}].[1_{z}])$ and $([1_{x}].[1_{y}]).[1_{z}]$ are equal to 
$[1_{x}]$ if $x=y=z$ and $0$ otherwise.

Note that if we denote by $H_{R}=Im(\Pi_{H}^{R})$ and 
$p_{R}:H\rightarrow H_{R}$ and $i_{R}:H_{R}\rightarrow H$ are the
morphisms such that $\Pi_{H}^{R}=i_{R}\co p_{R}$ and $p_{R}\co
i_{R}=id_{H_{R}}$, in this example $H_{L}=H_{R}$.
\end{example}

\begin{remark}
\label{more}  By  the second equality of 
(\ref{pi-composition-2}) it is easy to show that
$$
\setlength{\unitlength}{3mm}
\begin{picture}(30,4)
\put(3,2){\vector(1,0){4}} \put(11,2.5){\vector(1,0){10}}
\put(11,1.5){\vector(1,0){10}} \put(1,2){\makebox(0,0){$H_{L}$}}
\put(9,2){\makebox(0,0){$H$}} \put(24,2){\makebox(0,0){$H\otimes
H$}} \put(5.5,3){\makebox(0,0){$i_{L}$}}
\put(16,3.5){\makebox(0,0){$\delta_{H}$}}
\put(16,0.5){\makebox(0,0){$(H\ot \overline{\Pi}_{H}^{R})\co
\delta_{H}$}}
\end{picture}
$$
is an equalizer diagram in ${\mathcal C}$. Analogously,  by (\ref{pi-composition-1}), 
\vspace{0.1cm}
$$
\setlength{\unitlength}{1mm}
\begin{picture}(101.00,10.00)
\put(20.00,8.00){\vector(1,0){25.00}}
\put(20.00,4.00){\vector(1,0){25.00}}
\put(55.00,6.00){\vector(1,0){21.00}}
\put(32.00,11.00){\makebox(0,0)[cc]{$\mu_{H}$ }}
\put(33.00,0.00){\makebox(0,0)[cc]{$\mu_{H}\co (H\otimes
\overline{\Pi}_{H}^{L})  $ }}
\put(65.00,9.00){\makebox(0,0)[cc]{$p_{L}$ }}
\put(13.00,6.00){\makebox(0,0)[cc]{$ H\otimes H$ }}
\put(50.00,6.00){\makebox(0,0)[cc]{$ H$ }}
\put(83.00,6.00){\makebox(0,0)[cc]{$H_{L} $ }}
\end{picture}
$$
\vspace{0.1cm} is a coequalizer diagram in ${\mathcal C}$.

Also, by a similar proof to the one used in Proposition
\ref{magma-comonad}, we obtain that $H_{R}=Im(\Pi_{H}^{R})$ is a unital magma 
 in ${\mathcal C}$ with structure $(H_{R},
\eta_{H_{R}}=p_{R}\co \eta_{H}, \mu_{H_{R}}=p_{R}\co \mu_{H}\co
(i_{R}\ot i_{R}))$ and it is a comonoid in ${\mathcal C}$ where
$\varepsilon_{H_{R}}=\varepsilon_{H}\co
i_{R}$ and $\delta_{H_{R}}=(p_{R}\ot p_{R})\co \delta_{H}\co i_{R}$.  Moreover,
$$
\setlength{\unitlength}{3mm}
\begin{picture}(30,4)
\put(3,2){\vector(1,0){4}} \put(11,2.5){\vector(1,0){10}}
\put(11,1.5){\vector(1,0){10}} \put(1,2){\makebox(0,0){$H_{R}$}}
\put(9,2){\makebox(0,0){$H$}} \put(24,2){\makebox(0,0){$H\ot H$}}
\put(5.5,3){\makebox(0,0){$i_{R}$}} \put(16,3.5){\makebox(0,0){$
\delta_{H}$}} \put(16,0){\makebox(0,0){$(\Pi_{H}^{R}\ot H) \co
\delta_{H}$}}
\end{picture}
$$

$$
\setlength{\unitlength}{3mm}
\begin{picture}(30,4)
\put(3,2){\vector(1,0){4}} \put(11,2.5){\vector(1,0){10}}
\put(11,1.5){\vector(1,0){10}} \put(1,2){\makebox(0,0){$H_{R}$}}
\put(9,2){\makebox(0,0){$H$}} \put(24,2){\makebox(0,0){$H\ot H$}}
\put(5.5,3){\makebox(0,0){$i_{R}$}} \put(16,3.5){\makebox(0,0){$
\delta_{H}$}} \put(16,0){\makebox(0,0){$(\overline{\Pi}_{H}^{L}\ot
H) \co \delta_{H}$}}
\end{picture}
$$
are equalizer diagrams  and
$$
\setlength{\unitlength}{1mm}
\begin{picture}(101.00,10.00)
\put(20.00,8.00){\vector(1,0){25.00}}
\put(20.00,4.00){\vector(1,0){25.00}}
\put(55.00,6.00){\vector(1,0){21.00}}
\put(32.00,11.00){\makebox(0,0)[cc]{$\mu_{H}$ }}
\put(33.00,0.00){\makebox(0,0)[cc]{$\mu_{H}\co (\Pi_{H}^{R}\ot H)
$ }} \put(65.00,9.00){\makebox(0,0)[cc]{$p_{R}$ }}
\put(13.00,6.00){\makebox(0,0)[cc]{$ H\otimes H$ }}
\put(50.00,6.00){\makebox(0,0)[cc]{$ H$ }}
\put(83.00,6.00){\makebox(0,0)[cc]{$H_{R} $ }}
\end{picture}
$$

$$
\setlength{\unitlength}{1mm}
\begin{picture}(101.00,10.00)
\put(20.00,8.00){\vector(1,0){25.00}}
\put(20.00,4.00){\vector(1,0){25.00}}
\put(55.00,6.00){\vector(1,0){21.00}}
\put(32.00,11.00){\makebox(0,0)[cc]{$\mu_{H}$ }}
\put(33.00,0.00){\makebox(0,0)[cc]{$\mu_{H}\co
(\overline{\Pi}_{H}^{R}\ot H) $ }}
\put(65.00,9.00){\makebox(0,0)[cc]{$p_{R}$ }}
\put(13.00,6.00){\makebox(0,0)[cc]{$ H\otimes H$ }}
\put(50.00,6.00){\makebox(0,0)[cc]{$ H$ }}
\put(83.00,6.00){\makebox(0,0)[cc]{$H_{R} $ }}
\end{picture}
$$
are coequalizer diagrams.
\end{remark}

\begin{proposition}
\label{mu-assoc}
Let $H$ be a weak Hopf quasigroup. The following identities hold:
\begin{equation}
\label{mu-assoc-1}
\mu_{H}\co (\mu_{H}\ot H)\co (H\ot ((\Pi_{H}^{L}\ot H)\co \delta_{H}))=\mu_{H}=
\mu_{H}\co (\mu_{H}\ot \Pi_{H}^{R})\co (H\ot  \delta_{H}),
\end{equation}
\begin{equation}
\label{mu-assoc-2}
\mu_{H}\co (\Pi_{H}^{L}\ot \mu_{H})\co (\delta_{H}\ot H)=\mu_{H}=
\mu_{H}\co (H\ot (\mu_{H}\co ( \Pi_{H}^{R}\ot H)))\co (\delta_{H}\ot H),
\end{equation}
\begin{equation}
\label{mu-assoc-3}
\mu_{H}\co (\lambda_{H}\ot (\mu_{H}\co ( \Pi_{H}^{L}\ot H)))\co (\delta_{H}\ot H)=\mu_{H}\co (\lambda_{H}\ot H)\end{equation}
$$=
\mu_{H}\co (\Pi_{H}^{R}\ot (\mu_{H}\co ( \lambda_{H}\ot H)))\co (\delta_{H}\ot H),
$$
\begin{equation}
\label{mu-assoc-4}
\mu_{H}\co (\mu_{H}\ot H)\co (H\ot ((\lambda_{H}\ot\Pi_{H}^{L})\co \delta_{H}))=\mu_{H}\co (H\ot \lambda_{H})\end{equation}
$$= \mu_{H}\co (\mu_{H}\ot H)\co (H\ot ((\Pi_{H}^{R}\ot \lambda_{H})\co \delta_{H})).$$

\end{proposition}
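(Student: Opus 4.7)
The proof plan is to reduce each of the four identities to the formulas of Proposition \ref{mu-idemp}, combined with axioms (a1), (a4-3)--(a4-7), coassociativity of $\delta_H$, and counitality. All four identities express a \emph{restricted} associativity: although $\mu_H$ need not associate, rebracketing three factors is legitimate once one of them is absorbed by $\Pi_H^L$ or $\Pi_H^R$, at the cost of invoking one of the antipode axioms.

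For the first equality of (\ref{mu-assoc-1}) I would substitute (\ref{mu-pi-l}), turning the left-hand side on input $x \ot y$ into $\varepsilon_H(x^{(1)} y^{(1)})(x^{(2)} y^{(2)})$; by (a1) this equals $(\varepsilon_H \ot H) \co \delta_H \co \mu_H(x \ot y)$, and counitality gives $xy$. For the second equality I would rewrite $\Pi_H^R(y^{(2)})$ via (a4-7) as $\lambda_H(y^{(2)(1)}) \cdot y^{(2)(2)}$; then using the other coassociative presentation of $\delta_H^2 y$, axiom (a4-6) collapses the inner subexpression $(x y^{(1,1)}) \lambda_H(y^{(1,2)})$ into $x \Pi_H^L(y^{(1)})$, and the outer $y^{(2)}$ reduces the whole expression to the first equality.

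The proof of (\ref{mu-assoc-2}) is dual: the second equality follows directly from (\ref{mu-pi-r}) together with (a1) and counitality, and the first is obtained from it by the parallel two-step reduction via (a4-5) and (a4-4). For (\ref{mu-assoc-3}) and (\ref{mu-assoc-4}), I would invoke (a4-3) (which states $\lambda_H = \lambda_H * \Pi_H^L = \Pi_H^R * \lambda_H$) to expand the middle terms $\lambda_H(x) \cdot y$ and $x \cdot \lambda_H(y)$; each identity then reduces to a restricted associativity of the form $a(\Pi_H^L(b) c) = (a \Pi_H^L(b)) c$ (or its $\Pi_H^R$-analogue) where $a, b, c$ are coupled via the coproduct. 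These restricted associativities I would verify by exactly the same two-step pattern: fire one of (a4-4)--(a4-7) under one presentation of $\delta_H^2$, then fire another under the other presentation. Within each of (\ref{mu-assoc-3}) and (\ref{mu-assoc-4}) the two outer equalities even reduce to a common intermediate expression obtained by expanding $\Pi_H^L$ via its definition, so only one chain of reductions to the middle term is needed per proposition.

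The principal obstacle is precisely the non-associativity of $\mu_H$: each reassociation must be justified by exactly one of (a4-4)--(a4-7). The central trick is the dual use of coassociativity $(\delta_H \ot H) \co \delta_H = (H \ot \delta_H) \co \delta_H$, which provides two presentations of $\delta_H^2$ and allows us to chain two of the quasigroup axioms together, using the first under one presentation to transform a subexpression and the second under the other to collapse the result into the middle term.
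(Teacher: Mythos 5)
Your handling of (\ref{mu-assoc-1}) and (\ref{mu-assoc-2}) matches the paper exactly: the outer equalities come from (\ref{mu-pi-l}) (resp. (\ref{mu-pi-r})) together with (a1) and counitality, and the inner ones from the chain (a4-7) then (a4-6) (resp. (a4-5) then (a4-4)) across the two presentations of the iterated coproduct. The same two-step chain also proves, as in the paper, that the two \emph{outer} expressions of (\ref{mu-assoc-3}) are equal to each other (apply (a4-5) inside, reassociate the coproduct, apply (a4-4) outside), and analogously for (\ref{mu-assoc-4}).

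The gap is your claim that the remaining equality with the middle terms $\mu_{H}\co(\lambda_{H}\ot H)$ and $\mu_{H}\co(H\ot\lambda_{H})$ follows ``by exactly the same two-step pattern.'' It does not: firing one of (a4-4)--(a4-7) under one presentation of $\delta_{H}^{2}$ and another under the other presentation only shuttles between the two outer expressions. Concretely, starting from $\Pi_{H}^{R}(x^{(1)})(\lambda_{H}(x^{(2)})y)$, expanding $\Pi_{H}^{R}$ by (a4-4), reassociating the coproduct and applying (a4-5) returns $\lambda_{H}(x^{(1)})(\Pi_{H}^{L}(x^{(2)})y)$, i.e.\ the other outer term, and no further axiom among (a4-4)--(a4-7) matches; while expanding $\lambda_{H}(x)y$ via (a4-3) as $(\Pi_{H}^{R}(x^{(1)})\lambda_{H}(x^{(2)}))y$ leaves you needing the restricted associativity $(\Pi_{H}^{R}(a)b)c=\Pi_{H}^{R}(a)(bc)$, which is not an axiom and is not reachable this way. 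The paper closes the loop by a different mechanism: it converts $\mu_{H}\co(\Pi_{H}^{R}\ot H)$ into a counit expression via (\ref{mu-pi-r}), expands $\delta_{H}\co\mu_{H}$ by (a1), inserts and removes $\Pi_{H}^{L}$ using (\ref{mu-pi-l-varep}) and (\ref{pi-delta-mu-pi-1}), and invokes the auxiliary identity (\ref{2-mu-delta-pi-l}) (derived from (\ref{delta-pi-l})) to transport the $\Pi_{H}^{L}$-absorption across the product, before reassembling $\Pi_{H}^{R}\ast\lambda_{H}=\lambda_{H}$ by (a4-3). Although you announce Proposition \ref{mu-idemp} in your opening sentence, your concrete plan for (\ref{mu-assoc-3}) and (\ref{mu-assoc-4}) never deploys (\ref{mu-pi-r})/(\ref{mu-pi-l}) in this way, and without some such counit argument the proof of these two identities does not close.
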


\begin{proof}
Let us first prove (\ref{mu-assoc-1}). By (a1) of Definition \ref{Weak-Hopf-quasigroup} and (\ref{mu-pi-l}) we have 
$$\mu_{H}=(\varepsilon_{H}\ot H)\co \delta_{H}\co \mu_{H}=((\varepsilon_{H}\co \mu_{H})\ot \mu_{H})\co \delta_{H\ot H}=\mu_{H}\co (\mu_{H}\ot H)\co (H\ot ((\Pi_{H}^{L}\ot H)\co \delta_{H})).$$ 

On the other hand, by the coassociativity of $\delta_{H}$, (a4-6) and  (a4-7) of Definition \ref{Weak-Hopf-quasigroup}, 
\begin{itemize}
\item[ ]$\hspace{0.38cm}  \mu_{H}\co (\mu_{H}\ot \Pi_{H}^{R})\co (H\ot  \delta_{H}) $

\item[ ]$=\mu_{H}\co (\mu_{H}\ot H)\co (\mu_{H}\ot \lambda_{H}\ot H)\co (H\ot \ot H\ot \delta_{H})\co (H\ot \delta_{H})  $

\item[ ]$= \mu_{H}\co (\mu_{H}\ot H)\co (\mu_{H}\ot \lambda_{H}\ot H)\co (H\ot  \delta_{H}\ot H)\co (H\ot \delta_{H})$

\item[ ]$=\mu_{H}\co (\mu_{H}\ot H)\co (H\ot ((\Pi_{H}^{L}\ot H)\co \delta_{H}))$

\end{itemize}

The proof for (\ref{mu-assoc-2}) is similar but we must  use (\ref{mu-pi-r}), (a4-5) and  (a4-4) of Definition \ref{Weak-Hopf-quasigroup} instead of  (\ref{mu-pi-l}), (a4-6) and (a4-7) respectively.

To prove (\ref{mu-assoc-3}) first note that by (a4-5), (a4-4) of Definition \ref{Weak-Hopf-quasigroup} and the coassociativity of $\delta_{H}$ we have:
\begin{itemize}
\item[ ]$\hspace{0.38cm} \mu_{H}\co (\lambda_{H}\ot (\mu_{H}\co ( \Pi_{H}^{L}\ot H)))\co (\delta_{H}\ot H)  $

\item[ ]$= \mu_{H}\co (\lambda_{H}\ot (\mu_H\circ (H\ot \mu_H)\circ (H\ot \lambda_H\ot
H)\circ (\delta_H\ot H)))\co (\delta_{H}\ot H) $

\item[ ]$= \mu_{H}\co (\lambda_{H}\ot \mu_{H})\co (\delta_{H}\ot H)\co (H\ot (\mu_{H}\co (\lambda_{H}\ot H)))\co (\delta_{H}\ot H)$

\item[ ]$=\mu_{H}\co (\Pi_{H}^{R}\ot (\mu_{H}\co ( \lambda_{H}\ot H)))\co (\delta_{H}\ot H).$
\end{itemize}

On the other hand, by (\ref{delta-pi-l}), the naturality of $c$, the coassociativity of  $\delta_{H}$ and (a1) of Definition \ref{Weak-Hopf-quasigroup} we also have the following identity:
\begin{equation}
\label{2-mu-delta-pi-l}
(\mu_{H}\ot (\mu_{H}\co (H\ot \Pi_{H}^{L})))\co \delta_{H\ot H}=(\mu_{H}\ot H)\co (H\ot c_{H,H})\co (\delta_{H}\ot H).
\end{equation}
 
Therefore, 

\begin{itemize}
\item[ ]$\hspace{0.38cm} \mu_{H}\co (\Pi_{H}^{R}\ot (\mu_{H}\co ( \lambda_{H}\ot H)))\co (\delta_{H}\ot H)  $

\item[ ]$= (H\ot (\varepsilon_{H}\co \mu_{H}))\co (c_{H,H}\ot H)\co  (H\ot ((\delta_{H}\co \mu_{H})\co ( \lambda_{H}\ot H)))\co (\delta_{H}\ot H)  $

\item[ ]$=(H\ot (\varepsilon_{H}\co \mu_{H}))\co (c_{H,H}\ot H)\co  (H\ot (((\mu_{H}\ot \mu_{H})\co \delta_{H\ot H})\co ( \lambda_{H}\ot H)))\co (\delta_{H}\ot H) $

\item[ ]$=(H\ot (\varepsilon_{H}\co \mu_{H}))\co (c_{H,H}\ot H)\co  (H\ot (((\mu_{H}\ot (\Pi_{H}^{L}\co\mu_{H}))\co \delta_{H\ot H})\co ( \lambda_{H}\ot H)))\co (\delta_{H}\ot H)$

\item[ ]$=(H\ot (\varepsilon_{H}\co \mu_{H}))\co (c_{H,H}\ot H)\co  (H\ot (((\mu_{H}\ot (\Pi_{H}^{L}\co\mu_{H}\co (H\ot \Pi_{H}^{L})))\co \delta_{H\ot H})\co ( \lambda_{H}\ot H))) \co (\delta_{H}\ot H)$

\item[ ]$=(H\ot (\varepsilon_{H}\co \mu_{H}))\co (c_{H,H}\ot H)\co  (H\ot (((\mu_{H}\ot (\mu_{H}\co (H\ot \Pi_{H}^{L})))\co \delta_{H\ot H})\co ( \lambda_{H}\ot H))) \co (\delta_{H}\ot H)$

\item[ ]$=(H\ot (\varepsilon_{H}\co \mu_{H}))\co (c_{H,H}\ot H)\co  (H\ot (((\mu_{H}\ot H)\co (H\ot c_{H,H})\co (\delta_{H}\ot H))\co ( \lambda_{H}\ot H)))   \co (\delta_{H}\ot H)$

\item[ ]$=\mu_{H}\co (((H\ot (\varepsilon_{H}\co \mu_{H}))\co (c_{H,H}\ot H)\co  (H\ot \delta_{H} ))\ot H)\co (H\ot \lambda_{H}\ot H)   \co (\delta_{H}\ot H)$

\item[ ]$=\mu_{H}\co ((\Pi_{H}^{R}\ast \lambda_{H})\ot H) $

\item[ ]$= \mu_{H}\co (\lambda_{H}\ot H).$

\end{itemize}

In the last calculus the first and the eight equalities are consequence of (\ref{mu-pi-r}). In the second one we used (a1) of Definition \ref{Weak-Hopf-quasigroup}. The third and the fifth ones follow by (\ref{mu-pi-l-varep}), the fourth one by (\ref{pi-delta-mu-pi-1}) and the sixth one by  (\ref{2-mu-delta-pi-l}). The seventh one relies on the naturality of $c$ and the last one follows by (a4-3) of Definition \ref{Weak-Hopf-quasigroup}. 

By a similar reasoning but using that 
\begin{equation}
\label{2-mu-delta-pi-r}
((\mu_{H}\co (\Pi_{H}^{R}\ot H))\ot \mu_{H})\co \delta_{H\ot H}=(H\ot \mu_{H})\co (c_{H,H}\ot H)\co (H\ot \delta_{H})
\end{equation}
instead of (\ref{2-mu-delta-pi-l}) we obtain that (\ref{mu-assoc-4}) holds.

\end{proof}

\begin{remark}
 Note that as a consequence of (\ref{mu-assoc-1}) or (\ref{mu-assoc-2}) we have 
 \begin{equation}
\label{mu-assoc-5}
\Pi_{H}^{L}\ast id_{H}=id_{H}=id_{H}\ast \Pi_{H}^{R}.
\end{equation}
\end{remark}

\begin{proposition}
\label{mu-delta-anti}
Let $H$ be a weak Hopf quasigroup. The following identities hold:
\begin{equation}
\label{mu-delta-anti-1}
\mu_{H}\co (\Pi_{H}^{L}\ot \Pi_{H}^{R})=\mu_{H}\co c_{H,H}^{-1}\co (\Pi_{H}^{L}\ot \Pi_{H}^{R}),
\end{equation}
\begin{equation}
\label{mu-delta-anti-2}
(\Pi_{H}^{L}\ot \Pi_{H}^{R})\co \delta_{H}= (\Pi_{H}^{L}\ot \Pi_{H}^{R})\co c_{H,H}^{-1}\co \delta_{H},
\end{equation}
\begin{equation}
\label{mu-delta-anti-3}
\mu_{H}\co (\Pi_{H}^{R}\ot \Pi_{H}^{L})=\mu_{H}\co c_{H,H}\co (\Pi_{H}^{R}\ot \Pi_{H}^{L}),
\end{equation}
\begin{equation}
\label{mu-delta-anti-4}
(\Pi_{H}^{R}\ot \Pi_{H}^{L})\co \delta_{H}= (\Pi_{H}^{R}\ot \Pi_{H}^{L})\co c_{H,H}\co \delta_{H},
\end{equation}
\end{proposition}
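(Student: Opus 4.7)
The four identities are of ``braided commutation'' type, asserting that composed with $\Pi_H^L\ot \Pi_H^R$ (resp.\ $\Pi_H^R\ot\Pi_H^L$) the product $\mu_H$ is unaffected by inserting $c_{H,H}^{-1}$ (resp.\ $c_{H,H}$) beforehand, and dually for $\delta_H$. The source of this flexibility is the second clauses of axioms (a2) and (a3) of Definition \ref{Weak-Hopf-quasigroup}: each of them exhibits an equality between two expressions differing only in the presence of a $c_{H,H}^{-1}$. The plan is to expose, on the left-hand side of each identity, a sub-pattern matching exactly one of these clauses, and then perform the replacement.

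For (\ref{mu-delta-anti-1}), I would expand $\Pi_H^L$ via (a4-1) and $\Pi_H^R$ via (a4-2), apply (a1) to pass $\mu_H$ through the two copies of $\delta_H\co\eta_H$ present, and use naturality of $c$ to collect the counit-times-product factors. What remains is a subexpression of shape $((\varepsilon_H\co\mu_H)\ot(\varepsilon_H\co\mu_H))\co(H\ot\delta_H\ot H)$, which by the last equality of (a2) coincides with the same expression with $c_{H,H}^{-1}\co\delta_H$ in place of $\delta_H$. Running the preceding rearrangement in reverse then displays the right-hand side $\mu_H\co c_{H,H}^{-1}\co(\Pi_H^L\ot \Pi_H^R)$. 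For (\ref{mu-delta-anti-3}), the same scheme works but starting from the alternative formulas (\ref{new-pi-1}) and (\ref{new-pi-2}); the opposite placement of $c_{H,H}^{-1}$ there has the effect of inserting $c_{H,H}$ instead of its inverse at the end.

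The coproduct versions (\ref{mu-delta-anti-2}) and (\ref{mu-delta-anti-4}) are proved dually, exchanging the roles of (a2) and (a1) with those of (a3) and (a1). Starting from $(\Pi_H^L\ot\Pi_H^R)\co\delta_H$, I would apply the formulas (\ref{delta-pi-l}) and (\ref{delta-pi-r}) of Proposition \ref{delta-idemp} to expand, use (a1) in its ``$\delta_H$ through $\mu_H$'' direction, and then recognize a subexpression of the form $(H\ot\mu_H\ot H)\co((\delta_H\co\eta_H)\ot(\delta_H\co\eta_H))$, which the second equality of (a3) allows us to replace by the variant containing $\mu_H\co c_{H,H}^{-1}$. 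For (\ref{mu-delta-anti-4}) one passes to the opposite variants of the expansion formulas, in parallel with the switch between (\ref{mu-delta-anti-1}) and (\ref{mu-delta-anti-3}).

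I expect the main obstacle to be bookkeeping of the braiding: axiom (a1) bundles several copies of $c_{H,H}$ inside $\delta_{H\ot H}$ (and dually $\mu_{H\ot H}$), so a patient sequence of applications of the naturality of $c$ is needed to reposition these before the sub-pattern of (a2) or (a3) becomes visible. Once the right matching is exposed, each identity follows from a single substitution followed by the inverse of the preceding rearrangement.
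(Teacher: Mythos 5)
Your overall strategy---expose a sub-pattern matching one of the ``$c_{H,H}^{-1}$-flexible'' clauses of the axioms and perform a single substitution---is exactly the paper's. But you have paired the axioms with the wrong identities, and the patterns you propose to find are not present in the expressions you start from. Write $\Pi_{H}^{L}$ and $\Pi_{H}^{R}$ via (a4-1)/(a4-2) (or, as the paper does, via (\ref{new-pi-1}) and (\ref{new-pi-2})). In $\mu_{H}\co (\Pi_{H}^{L}\ot \Pi_{H}^{R})$ the two factors $\varepsilon_{H}\co\mu_{H}$ receive, respectively, one leg of the first copy of $\delta_{H}\co\eta_{H}$ together with the first input, and one leg of the second copy of $\delta_{H}\co\eta_{H}$ together with the second input; those two inputs are independent tensor factors, not the two legs of a common $\delta_{H}$, so no subexpression of the form $((\varepsilon_{H}\co\mu_{H})\ot(\varepsilon_{H}\co\mu_{H}))\co(H\ot\delta_{H}\ot H)$ occurs and the last clause of (a2) has nothing to bite on. What does occur is the outer $\mu_{H}$ acting on the two remaining legs of the two copies of $\delta_{H}\co\eta_{H}$, i.e.\ the pattern $(H\ot\mu_{H}\ot H)\co((\delta_{H}\co\eta_{H})\ot(\delta_{H}\co\eta_{H}))$, and it is the second equality of (a3) that lets you replace $\mu_{H}$ there by $\mu_{H}\co c_{H,H}^{-1}$; unwinding by naturality of $c$ then gives (\ref{mu-delta-anti-1}). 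Your appeal to (a1) here also has no target, since no composite $\delta_{H}\co\mu_{H}$ appears in the expression.

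Dually, in $(\Pi_{H}^{L}\ot \Pi_{H}^{R})\co\delta_{H}$ the two legs of the incoming $\delta_{H}$ feed the two factors $\varepsilon_{H}\co\mu_{H}$, which is precisely the (a2) pattern, while the legs of the two copies of $\delta_{H}\co\eta_{H}$ not consumed by $\varepsilon_{H}\co\mu_{H}$ end up in separate output tensor factors and are never multiplied together, so the (a3) pattern you invoke for (\ref{mu-delta-anti-2}) does not appear. In short, (a3) proves (\ref{mu-delta-anti-1}) and (a2) proves (\ref{mu-delta-anti-2}) --- the reverse of your assignment --- and as written the key substitution step fails in both cases. (The expansions (\ref{delta-pi-l}) and (\ref{delta-pi-r}) are also not the right tools for (\ref{mu-delta-anti-2}): they describe $(H\ot\Pi_{H}^{L})\co\delta_{H}$ and $(\Pi_{H}^{R}\ot H)\co\delta_{H}$, i.e.\ the projections acting on the opposite legs.) Finally, note that the paper gets (\ref{mu-delta-anti-3}) and (\ref{mu-delta-anti-4}) for free by composing (\ref{mu-delta-anti-1}) and (\ref{mu-delta-anti-2}) with $c_{H,H}$ and using naturality, rather than by a separate direct computation.
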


\begin{proof}
The equalities (\ref{mu-delta-anti-3}) and (\ref{mu-delta-anti-4}) can be obtained from (\ref{mu-delta-anti-1}) and 
(\ref{mu-delta-anti-2}) composing with $c_{H,H}$. Then we only need to prove (\ref{mu-delta-anti-1}) and 
(\ref{mu-delta-anti-2}).  Note that, by (\ref{new-pi-1}), (\ref{new-pi-2}) and (a3) of Definition \ref{Weak-Hopf-quasigroup} we have: 
\begin{itemize}
\item[ ]$\hspace{0.38cm} \mu_{H}\co (\Pi_{H}^{L}\ot \Pi_{H}^{R})  $

\item[ ]$=   ((\varepsilon_{H}\co \mu_{H}\co c_{H,H}^{-1})\ot H\ot (\varepsilon_{H}\co 
\mu_{H}\co c_{H,H}^{-1}))\co (H\ot ((H\ot
\mu_{H}\ot H)\co ((\delta_{H}\co \eta_{H}) \ot (\delta_{H}\co
\eta_{H})))\ot H)$

\item[ ]$= ((\varepsilon_{H}\co \mu_{H}\co c_{H,H}^{-1})\ot H\ot (\varepsilon_{H}\co 
\mu_{H}\co c_{H,H}^{-1}))\co (H\ot ((H\ot
(\mu_{H}\co c_{H,H}^{-1})\ot H)\co ((\delta_{H}\co \eta_{H}) \ot (\delta_{H}\co
\eta_{H})))\ot H)$

\item[ ]$= \mu_{H}\co c_{H,H}^{-1}\co (\Pi_{H}^{L}\ot \Pi_{H}^{R}).$
\end{itemize}
Therefore, (\ref{mu-delta-anti-1}) holds. The proof for (\ref{mu-delta-anti-2}) is similar using (a2) of Definition \ref{Weak-Hopf-quasigroup} instead of (a3).

\end{proof}

\begin{theorem}
\label{anti-antipode}
Let $H$ be a weak Hopf quasigroup. The antipode of $H$ is antimultiplicative and anticomultiplicative, i.e. the following equalities hold:
\begin{equation}
\label{anti-antipode-1}
\lambda_{H}\co \mu_{H}=\mu_{H}\co c_{H,H}\co (\lambda_{H}\ot
\lambda_{H}),
\end{equation}
\begin{equation}
\label{anti-antipode-2}
\delta_{H}\co \lambda_{H}=(\lambda_{H}\ot \lambda_{H})\co
c_{H,H}\co \delta_{H},
\end{equation}
\end{theorem}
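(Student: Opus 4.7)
I would follow the classical convolution-inverse strategy, adapted to cope with the non-associativity of $\mu_{H}$. Both identities are proved by analogous calculations, so I sketch the multiplicative case (\ref{anti-antipode-1}) in detail and indicate the dual adaptation for (\ref{anti-antipode-2}) at the end. Set $F=\lambda_{H}\co \mu_{H}$ and $G=\mu_{H}\co c_{H,H}\co(\lambda_{H}\ot\lambda_{H})$, viewed as morphisms $H\ot H\rightarrow H$. The goal is to show that $F$ and $G$ each serve as a convolution inverse of $\mu_{H}$ (in the convolution on $\mathrm{Hom}(H\ot H,H)$ induced by the tensor comonoid structure on $H\ot H$), modulo the idempotents $\Pi_{H}^{L}$ and $\Pi_{H}^{R}$, and then to deduce $F=G$ by a uniqueness argument in the spirit of Proposition \ref{antipode-1}.

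First I would compute $\mu_{H}\ast F$ and $F\ast\mu_{H}$. Expanding the convolution, pushing $\delta_{H\ot H}$ through the pair $(\mu_{H},\mu_{H})$ via axiom (a1), and applying the definitions of $\Pi_{H}^{L}$ and $\Pi_{H}^{R}$ gives
$$\mu_{H}\ast F=\Pi_{H}^{L}\co\mu_{H},\qquad F\ast\mu_{H}=\Pi_{H}^{R}\co\mu_{H}.$$
These identities are formal and require no form of associativity. The harder step is to establish the corresponding identities for $G$: after expanding with $\delta_{H\ot H}=(H\ot c_{H,H}\ot H)\co(\delta_{H}\ot\delta_{H})$, the product $G\ast\mu_{H}$ takes a shape in which the non-associativity of $\mu_{H}$ prevents the standard rebracketing. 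My remedy would be to invoke the quasi-associativity axioms (a4-4)-(a4-7) to shift parentheses whenever an antipode image is adjacent to $\mu_{H}$, together with the naturality of $c$, the coassociativity of $\delta_{H}$, and the identities relating $\mu_{H}$ with $\Pi_{H}^{L},\Pi_{H}^{R}$ recorded in Propositions \ref{mu-idemp}--\ref{pi-composition}. Carried out carefully, this should yield $G\ast\mu_{H}=\Pi_{H}^{R}\co\mu_{H}$ and $\mu_{H}\ast G=\Pi_{H}^{L}\co\mu_{H}$.

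From here the equality $F=G$ follows by mimicking the uniqueness-of-antipode argument of Proposition \ref{antipode-1}: starting from (a4-3) composed on the right with $\mu_{H}$, I would rewrite $F$ as $F=(\Pi_{H}^{R}\co\mu_{H})\ast F=(G\ast\mu_{H})\ast F$, and then reshuffle brackets using (a4-4)-(a4-7) (the same device used in Proposition \ref{antipode-1}, where non-associativity was tolerated because one factor was always an idempotent) to reach $G\ast(\mu_{H}\ast F)=G\ast(\Pi_{H}^{L}\co\mu_{H})=G$. The anticomultiplicativity identity (\ref{anti-antipode-2}) is proved by the dual calculation: set $F'=\delta_{H}\co\lambda_{H}$ and $G'=(\lambda_{H}\ot\lambda_{H})\co c_{H,H}\co\delta_{H}$, and repeat the argument using axioms (a2) and (a3) together with the equalizer-type formulas of Remark \ref{Remark-delta-idemp} in place of (a4-4)-(a4-7). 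Since $\delta_{H}$ is strictly coassociative, this half will be noticeably shorter.

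The main obstacle throughout is the controlled, step-by-step invocation of (a4-4)-(a4-7) needed to simulate associativity in the computations of $G\ast\mu_{H}$ and $\mu_{H}\ast G$: every time one wants to move a parenthesis, one must check that an antipode image actually sits next to the moved factor, and otherwise first introduce such a factor using (a4-3) or the idempotency identities for $\Pi_{H}^{L}$ and $\Pi_{H}^{R}$. The coassociativity of $\delta_{H}$ means the anticomultiplicative half does not suffer from this difficulty, which is why I would expect (\ref{anti-antipode-1}) to consume the bulk of the argument.
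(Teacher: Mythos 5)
There is a genuine gap. Your strategy rests on a uniqueness-of-convolution-inverse argument in $\mathrm{Hom}(H\ot H,H)$: from $F\ast\mu_{H}=\Pi_{H}^{R}\co\mu_{H}$, $\mu_{H}\ast F=\Pi_{H}^{L}\co\mu_{H}$ and the corresponding identities for $G$, you want to conclude $F=G$ via $F=(G\ast\mu_{H})\ast F=G\ast(\mu_{H}\ast F)=G$. The middle equality is an instance of associativity of the convolution product on $\mathrm{Hom}(H\ot H,H)$, which holds if and only if $\mu_{H}$ is associative --- precisely what a weak Hopf quasigroup lacks. The axioms (a4-4)--(a4-7) cannot repair this rebracketing: they permit moving a parenthesis only in the very specific configuration where $\lambda_{H}$ (or a $\Pi$) is applied to one leg of a comultiplication and sits immediately adjacent to the product of the other leg. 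In $(G\ast\mu_{H})\ast F$ the three convolution factors are themselves composites ($G$ is a product of two antipode images, $F$ is $\lambda_{H}$ of a product), so none of them presents the shape those axioms require, and you cannot "first introduce such a factor using (a4-3)" without changing the morphism. The same objection applies to your appeal to Proposition \ref{antipode-1}: there the rebracketing succeeds only because (a4-2) and (a4-6) happen to match the exact configuration $(\lambda_{H}\ast id_{H})\ast s_{H}$, not because non-associativity is generically tolerable when an idempotent is present.

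A second, related gap: the identities $\mu_{H}\ast G=\Pi_{H}^{L}\co\mu_{H}$ and $G\ast\mu_{H}=\Pi_{H}^{R}\co\mu_{H}$ are asserted but not established, and they are essentially as hard as the theorem itself --- on elements the first reads $(h_{(1)}g_{(1)})\cdot(\lambda(g_{(2)})\lambda(h_{(2)}))=\Pi_{H}^{L}(hg)$, where the antipodes are buried inside the right-hand factor rather than adjacent to the outer product, so again (a4-4)--(a4-7) give no purchase. This is why the paper does not argue by convolution inverses at all: its proof is a direct chain of rewritings taking $\lambda_{H}\co\mu_{H}=(\lambda_{H}\ast\Pi_{H}^{L})\co\mu_{H}$ step by step to $\mu_{H}\co c_{H,H}\co(\lambda_{H}\ot\lambda_{H})$, at each stage invoking an identity ((\ref{2-mu-delta-pi-l}), (\ref{mu-pi-r}), (\ref{pi-delta-mu-pi-1}), (\ref{mu-delta-anti-1}), (\ref{mu-assoc-3}), (\ref{mu-assoc-4}), and (a4-4)/(a4-6)) that matches the exact configuration present. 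To salvage your outline you would need to prove the two missing convolution identities for $G$ and, separately, the specific rebracketing $(G\ast\mu_{H})\ast F=G\ast(\mu_{H}\ast F)$; at that point you would in effect have reproduced the paper's direct computation.
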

\begin{proof}
We will  prove (\ref{anti-antipode-1}). The proof for (\ref{anti-antipode-2}) is similar and we leave the details to the reader. 

\begin{itemize}
\item[ ]$\hspace{0.38cm}  \lambda_{H}\co \mu_{H} $

\item[ ]$=(\lambda_{H}\ast \Pi_{H}^{L})\co \mu_{H}  $

\item[ ]$=\mu_{H}\co (( \lambda_{H}\co \mu_{H})\ot (\Pi_{H}^{L}\co \mu_{H}))\co \delta_{H\ot H} $

\item[ ]$= \mu_{H}\co (( \lambda_{H}\co \mu_{H})\ot (\Pi_{H}^{L}\co \mu_{H}\co (H\ot \Pi_{H}^{L}))\co \delta_{H\ot H}$

\item[ ]$= \mu_{H}\co (\lambda_{H}\ot \Pi_{H}^{L})\co (\mu_{H}\ot H)\co (H\ot c_{H,H})\co (\delta_{H}\ot H)$

\item[ ]$=\mu_{H}\co (\mu_{H}\ot \lambda_{H})\co (\lambda_{H}\ot \delta_{H})\co (\mu_{H}\ot H)\co (H\ot c_{H,H})\co (\delta_{H}\ot H)  $

\item[ ]$= \mu_{H}\co ((\mu_{H}\co (\lambda_{H}\ot H))\ot \lambda_{H})\co (((\mu_{H}\ot H)\co (H\ot c_{H,H})\co (\delta_{H}\ot H))\ot H)\co (H\ot c_{H,H}) \co (\delta_{H}\ot H)$

\item[ ]$=\mu_{H}\co ((\mu_{H}\co (\lambda_{H}\ot H))\ot \lambda_{H})\co (((\mu_{H}\ot (\mu_{H}\co (H\ot \Pi_{H}^{L})))\co \delta_{H\ot H})\ot H) \co (H\ot c_{H,H}) \co (\delta_{H}\ot H)$

\item[ ]$= \mu_{H}\co ((\mu_{H}\co (\lambda_{H}\ot H))\ot \lambda_{H})\co (((\mu_{H}\ot (\mu_{H}\co (\mu_{H}\ot \lambda_{H})\co (H\ot \delta_{H})))\co \delta_{H\ot H})\ot H)$
\item[ ]$\hspace{0.38cm}\co (H\ot c_{H,H}) \co (\delta_{H}\ot H)  $

\item[ ]$=\mu_{H}\co ((\mu_{H}\co (\lambda_{H}\ot \mu_{H})\co (\delta_{H}\ot H))\ot H)\co (\mu_{H}\ot \lambda_{H}\ot H)\co (H\ot \delta_{H}\ot \lambda_{H})\co (H\ot c_{H,H}) \co (\delta_{H}\ot H)$

\item[ ]$=\mu_{H}\co ((\mu_{H}\co (\Pi_{H}^{R}\ot H))\ot H)\co (\mu_{H}\ot \lambda_{H}\ot H)\co (H\ot \delta_{H}\ot \lambda_{H})\co (H\ot c_{H,H}) \co (\delta_{H}\ot H)$

\item[ ]$= \mu_{H}\co ((\mu_{H}\co (\Pi_{H}^{R}\ot H))\ot H)\co ((\mu_{H}\co (\Pi_{H}^{R}\ot H))\ot \lambda_{H}\ot H)\co (H\ot \delta_{H}\ot \lambda_{H})\co (H\ot c_{H,H}) \co (\delta_{H}\ot H)$

\item[ ]$= \mu_{H}\co ((\mu_{H}\co (\Pi_{H}^{R}\ot H))\ot H)\co (((H\ot (\varepsilon_{H}\co \mu_{H}))\co (c_{H,H}\ot H)\co (H\ot \delta_{H}))\ot \lambda_{H}\ot H)\co (H\ot \delta_{H}\ot \lambda_{H}) $
\item[ ]$\hspace{0.38cm} \co (H\ot c_{H,H}) \co (\delta_{H}\ot H)$

\item[ ]$= \mu_{H}\co (((\mu_{H}\co (\lambda_{H}\ot \mu_{H})\co (\delta_{H}\ot H))\ot H)\co (((H\ot (\varepsilon_{H}\co \mu_{H}))\co (c_{H,H}\ot H)\co (H\ot \delta_{H})) \ot \lambda_{H}\ot H)$
\item[ ]$\hspace{0.38cm} \co (H\ot \delta_{H}\ot \lambda_{H}) \co (H\ot c_{H,H}) \co (\delta_{H}\ot H)$

\item[ ]$=\mu_{H}\co (\mu_{H}\ot H)\co (H\ot \mu_{H}\ot H)\co (\lambda_{H}\ot ((H\ot (\varepsilon_{H}\co \mu_{H}))\co (c_{H,H}\ot H)\co (H\ot \delta_{H}))\ot  \lambda_{H}\ot\lambda_{H})$
\item[ ]$\hspace{0.38cm} \co (c_{H,H}\ot \delta_{H}\ot H)\co (H\ot \delta_{H}\ot H)\co (H\ot c_{H,H})\co (\delta_{H}\ot H)$

\item[ ]$=\mu_{H}\co (\mu_{H}\ot H)\co (H\ot \mu_{H}\ot H)\co (\lambda_{H}\ot ((\mu_{H}\co (\Pi_{H}^{R}\ot H)
))\ot  \lambda_{H}\ot\lambda_{H})\co (c_{H,H}\ot \delta_{H}\ot H)$
\item[ ]$\hspace{0.38cm} \co (H\ot \delta_{H}\ot H)\co (H\ot c_{H,H})\co (\delta_{H}\ot H)$

\item[ ]$=\mu_{H}\co (\mu_{H}\ot H)\co (\lambda_{H}\ot ((\mu_{H}\co (\Pi_{H}^{R}\ot \Pi_{H}^{L})
))\ot  \lambda_{H})\co  (c_{H,H}\ot H\ot H)\co (H\ot \delta_{H}\ot H)\co (H\ot c_{H,H})$
\item[ ]$\hspace{0.38cm}\co (\delta_{H}\ot H)$

\item[ ]$= \mu_{H}\co ( \mu_{H}\ot H)\co (\lambda_{H}\ot ((\mu_{H}\co c_{H,H}\co (\Pi_{H}^{R}\ot \Pi_{H}^{L})
))\ot  \lambda_{H})\co  (c_{H,H}\ot H\ot H)\co (H\ot \delta_{H}\ot H)$
\item[ ]$\hspace{0.38cm}\co (H\ot c_{H,H})\co (\delta_{H}\ot H)$

\item[ ]$= \mu_{H}\co (\mu_{H}\ot H)\co (H\ot \mu_{H}\ot H)\co (((\lambda_{H}\ot \Pi_{H}^{L})\co \delta_{H})\ot 
((\Pi_{H}^{R}\ot \lambda_{H})\co \delta_{H}))\co c_{H,H}$

\item[ ]$=\mu_{H}\co (\mu_{H}\ot H)\co (\lambda_{H}\ot ((\Pi_{H}^{R}\ot \lambda_{H})\co \delta_{H}))\co c_{H,H}   $

\item[ ]$=\mu_{H}\co c_{H,H}\co (\lambda_{H}\ot
\lambda_{H}).$

\end{itemize}

The first equality follows by (a4-3) of Definition \ref{Weak-Hopf-quasigroup}, the second one by (a1) of Definition \ref{Weak-Hopf-quasigroup} and  the third one by (\ref{pi-delta-mu-pi-1}). The fourth and seventh ones relies on (\ref{2-mu-delta-pi-l}). The fifth, eighth and  sixteenth ones are consequence of (a4-6) of Definition \ref{Weak-Hopf-quasigroup}. In the sixth, ninth and eighteenth equalities we used the naturality of $c$ and the equalities tenth and thirteenth follow by (a4-4) of Definition \ref{Weak-Hopf-quasigroup}. By (\ref{pi-delta-mu-pi-2}) we obtain the eleventh equality and the twelfth and  fifteenth ones  are consequence of (\ref{mu-pi-r}). The naturality of $c$ and the coassociativity of $\delta_{H}$ imply the fourteenth equality and the seventeenth one follows by (\ref{mu-delta-anti-1}). Finally, the  nineteenth equality relies on (\ref{mu-assoc-3}) and the last one on (\ref{mu-assoc-4}).
\end{proof}

A general notion of dyslexia was introduced by Pareigis in
\cite{PA3}. The following definition is the weak Hopf quasigroup version
 of dyslexia introduced by us in the weak Hopf algebra
setting \cite{IND}. As application of (\ref{anti-antipode-1}) and
(\ref{anti-antipode-2}) we obtain a generalization of the main result about
(co)dyslexia contained in \cite{IND}.

\begin{definition}
\label{dix-codix}
Let $H$ be a weak Hopf quasigroup. We will say that $H$ is  $n$-dyslectic if
$\mu_{H}\co c_{H,H}^n= \mu_{H}$ where $c_{H,H}^n=c_{H,H}\co
\stackrel{n)}{\cdots}\co  c_{H,H}$. When $c_{H,H}^n\co \delta_{H}
=\delta_{H}$ we will say that $H$ is $n$-codyslectic. 
\end{definition}

\begin{proposition}
\label{dis} Let $H$ be a weak Hopf quasigroup. If $\lambda_{H}^n=id_{H}$, then $H$
is $n$-dyslectic and $n$-codyslectic.
\end{proposition}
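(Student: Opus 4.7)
The plan is to iterate the antimultiplicativity and anticomultiplicativity of $\lambda_H$ (Theorem \ref{anti-antipode}) and then specialize to the hypothesis $\lambda_H^n = id_H$. The key observation is that each application of $\lambda_H$ to a product introduces one factor of $c_{H,H}$ and duplicates the $\lambda_H$ on the two inputs; by naturality of the braiding, these $\lambda_H$'s can be pulled past subsequent braidings, so iterating gives a clean formula with $c_{H,H}^n$ and $\lambda_H^n \otimes \lambda_H^n$.

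More precisely, I would first prove by induction on $n$ that
$$\lambda_H^n \co \mu_H = \mu_H \co c_{H,H}^n \co (\lambda_H^n \ot \lambda_H^n).$$
The base case $n=1$ is exactly (\ref{anti-antipode-1}). For the inductive step, apply $\lambda_H$ to both sides of the formula for $\lambda_H^{n-1}\co \mu_H$, use (\ref{anti-antipode-1}) to rewrite $\lambda_H \co \mu_H$, and then push the outer $\lambda_H \ot \lambda_H$ across the $c_{H,H}^{n-1}$ already present by repeated use of the naturality of $c$. Once this identity is established, the hypothesis $\lambda_H^n=id_H$ immediately yields $\mu_H = \mu_H \co c_{H,H}^n$, i.e.\ $H$ is $n$-dyslectic in the sense of Definition \ref{dix-codix}.

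The codyslectic statement is handled symmetrically. I would prove by induction that
$$\delta_H \co \lambda_H^n = (\lambda_H^n \ot \lambda_H^n) \co c_{H,H}^n \co \delta_H,$$
with base case (\ref{anti-antipode-2}). The inductive step is obtained by precomposing the $n-1$ version with $\lambda_H$, applying (\ref{anti-antipode-2}) to the resulting $\delta_H \co \lambda_H$, and then using naturality of $c$ to commute the new $\lambda_H \ot \lambda_H$ past the accumulated $c_{H,H}^{n-1}$. Setting $\lambda_H^n = id_H$ then gives $\delta_H = c_{H,H}^n \co \delta_H$.

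The arguments are routine once the two inductive formulas are set up; the only step that requires a little care is justifying the commutation $(\lambda_H \ot \lambda_H) \co c_{H,H} = c_{H,H} \co (\lambda_H \ot \lambda_H)$, which is the standard naturality of the braiding applied to the morphism $\lambda_H:H\to H$ on each strand. I do not expect a genuine obstacle, since both halves of the proposition reduce to bookkeeping with Theorem \ref{anti-antipode} and the naturality hexagon of $c$.
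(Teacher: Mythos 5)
Your proposal is correct and follows essentially the same route as the paper: the paper also iterates (\ref{anti-antipode-1}) (resp.\ (\ref{anti-antipode-2})) together with the naturality of the braiding to move $\lambda_{H}\ot\lambda_{H}$ past the accumulated powers of $c_{H,H}$, merely writing the induction as an unrolled chain of equalities starting from $\mu_{H}=\lambda_{H}^{n}\co\mu_{H}$ rather than as a separately stated inductive formula.
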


\begin{proof} 
By (\ref{anti-antipode-1}), we have that 
\begin{itemize}
\item[ ]$\mu_{H}=\lambda_{H}^{n}\co\mu_{H}=\lambda_{H}^{n-1} \circ
\lambda_{H}\co\mu_{H}= \lambda_{H}^{n-1}\co \mu_{H}\co
(\lambda_{H}\ot \lambda_{H})\co c_{H,H}$ \item[
]$=\lambda_{H}^{n-2}\co  \lambda_{H}\co \mu_{H}\co (\lambda_{H}\ot
\lambda_{H})\co c_{H,H} = \lambda_{H}^{n-2}\co \mu_{H}\co
(\lambda_{H}^2\ot \lambda_{H}^2)\co c_{H,H}^2= \cdots$ \item[
]$\cdots=\mu_{H}\co (\lambda_{H}^{n}\ot \lambda_{H}^{n}) \circ
c_{H,H}^{n}=\mu_{H}\co  c_{H,H}^{n}.$
\end{itemize}

Analogously, if we use (\ref{anti-antipode-2}), with a similar calculation, we obtain
that $H$ is $n$-codyslectic. 
\end{proof}

\begin{theorem}
\label{cocommutative}
If  $H$ is a weak Hopf quasigroup (co)commutative then the antipode $\lambda_{H}$
satisfies $\lambda^{2}_{H}=id_{H}$.
\end{theorem}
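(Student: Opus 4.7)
The plan is to treat the commutative and cocommutative cases in parallel, since they are essentially dual. In each case the strategy is to obtain two different expressions for the same morphism, one of which is $\lambda_H^2$ and the other $id_H$, forcing their equality.

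First I would read off how the idempotents of Definition \ref{pi-bar} together with $\Pi_H^L$ and $\Pi_H^R$ rearrange in each setting. In the commutative case, $\varepsilon_H\co\mu_H=\varepsilon_H\co\mu_H\co c_{H,H}$ gives $\overline{\Pi}_H^L=\Pi_H^R$ and $\overline{\Pi}_H^R=\Pi_H^L$ by direct comparison of the explicit formulas; combined with Proposition \ref{pi-composition} this yields $\Pi_H^L\co\Pi_H^R=\Pi_H^L$ and $\Pi_H^R\co\Pi_H^L=\Pi_H^R$, and then Proposition \ref{pi-antipode-composition} gives $\lambda_H\co\Pi_H^R=\Pi_H^L$ and $\lambda_H\co\Pi_H^L=\Pi_H^R$. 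In the cocommutative case, $c_{H,H}\co\delta_H\co\eta_H=\delta_H\co\eta_H$ yields $\overline{\Pi}_H^L=\Pi_H^L$ and $\overline{\Pi}_H^R=\Pi_H^R$, and Proposition \ref{pi-antipode-composition} then gives $\lambda_H\co\Pi_H^L=\Pi_H^L$ and $\lambda_H\co\Pi_H^R=\Pi_H^R$.

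Second, using Theorem \ref{anti-antipode}, I would derive the key convolution equality. In the commutative case, commutativity turns the anti-multiplicativity of $\lambda_H$ into the identity $\lambda_H\co\mu_H=\mu_H\co(\lambda_H\ot\lambda_H)$; writing $\lambda_H^2\ot\lambda_H=(\lambda_H\ot\lambda_H)\co(\lambda_H\ot id_H)$ then gives $\lambda_H^2\ast\lambda_H=\lambda_H\co(\lambda_H\ast id_H)=\lambda_H\co\Pi_H^R=\Pi_H^L=id_H\ast\lambda_H$. In the cocommutative case, the analogous computation using anti-multiplicativity, naturality of $c_{H,H}$ and $c_{H,H}\co\delta_H=\delta_H$ gives $\lambda_H\ast\lambda_H^2=\lambda_H\co(\lambda_H\ast id_H)=\lambda_H\co\Pi_H^R=\Pi_H^R=\lambda_H\ast id_H$.

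The third step cancels $\lambda_H$ from one side by invoking an inverse-property axiom, which is where the non-associativity of $\mu_H$ is handled. In the commutative case, right-multiplying the equation $\lambda_H^2\ast\lambda_H=id_H\ast\lambda_H$ by $h_{(3)}$ and applying (a4-7) to both sides produces $\lambda_H^2\ast\Pi_H^R=id_H\ast\Pi_H^R=id_H$, the last equality being (\ref{pi-l}). In the cocommutative case, left-multiplying $\lambda_H\ast\lambda_H^2=\lambda_H\ast id_H$ by $h_{(1)}$ and applying (a4-5) produces $\Pi_H^L\ast\lambda_H^2=\Pi_H^L\ast id_H=id_H$. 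Finally I would check independently that these left-hand sides also equal $\lambda_H^2$: applying $\lambda_H$ once to the identity $id_H\ast\Pi_H^R=id_H$ (resp.\ $\Pi_H^L\ast id_H=id_H$) of (\ref{pi-l}) and using anti-multiplicativity together with the first step yields $\lambda_H\ast\Pi_H^L=\lambda_H$ in both cases; applying $\lambda_H$ once more, by the same manipulations, yields $\lambda_H^2\ast\Pi_H^R=\lambda_H^2$ in the commutative case and $\Pi_H^L\ast\lambda_H^2=\lambda_H^2$ in the cocommutative case. Combining with the third step gives $\lambda_H^2=id_H$ in each case. The main obstacle is the contrasting behaviour of $\lambda_H$ on the idempotents — it swaps $\Pi_H^L$ and $\Pi_H^R$ in the commutative case but fixes each of them in the cocommutative case — and the need to use (a4-4)--(a4-7) consistently as a substitute for associativity when rearranging products.
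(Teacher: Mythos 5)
Your proposal is correct and follows essentially the same route as the paper: both arguments combine the anti(co)multiplicativity of $\lambda_{H}$ with (co)commutativity to move $\lambda_{H}$ across convolutions, use the identifications $\Pi_{H}^{L}=\overline{\Pi}_{H}^{R}$, $\Pi_{H}^{R}=\overline{\Pi}_{H}^{L}$ (resp. $\Pi_{H}^{L}=\overline{\Pi}_{H}^{L}$, $\Pi_{H}^{R}=\overline{\Pi}_{H}^{R}$) together with Proposition \ref{pi-antipode-composition}, invoke (a4-7) (resp. (a4-5)) to reassociate the triple convolution, and conclude from $\Pi_{H}^{L}\ast id_{H}=id_{H}=id_{H}\ast\Pi_{H}^{R}$. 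The only difference is organizational: the paper writes the commutative case as a single chain $\lambda_{H}^{2}=\lambda_{H}^{2}\ast\Pi_{H}^{R}=(\lambda_{H}^{2}\ast\lambda_{H})\ast id_{H}=\Pi_{H}^{L}\ast id_{H}=id_{H}$, whereas you split the same identities into a "key convolution equality" followed by a cancellation step.
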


\begin{proof}
 If we assume that $H$ is commutative ($\mu_{H}\co c_{H,H}=\mu_{H}$, equivalently, $\mu_{H}\co c_{H,H}^{-1}=\mu_{H}$), by (\ref{new-pi-1}) $\Pi^{L}_{H}=\overline {\Pi}^{R}_{H}$. Analogously, by (\ref{new-pi-2}), $\Pi^{R}_{H}=\overline {\Pi}^{L}_{H}$. Then
$$ \lambda_{H}\circ \lambda_{H}=\lambda_{H}\circ (\lambda_{H}\ast\Pi^{L}_{H})=(\lambda_{H}\circ \lambda_{H})\ast(\lambda_{H}\circ
\Pi^{L}_{H})=(\lambda_{H}\circ \lambda_{H})\ast(\lambda_{H}\circ
\overline{\Pi}^{R}_{H})$$
$$=(\lambda_{H}\circ \lambda_{H})\ast \Pi^{R}_{H}=\mu_H\circ (\mu_H\ot H)\circ ((\lambda_{H}\circ \lambda_{H})\ot \lambda_H\ot H)
\circ (H\ot \delta_H)\co \delta_{H}$$
$$=((\lambda_{H}\circ \lambda_{H})\ast \lambda_{H})\ast id_{H}= (\lambda_{H}\circ \Pi^{R}_{H})\ast id_{H}= (\lambda_{H}\circ \overline{\Pi}^{L}_{H})\ast id_{H}=\Pi^{L}_{H}\ast id_{H}=id_{H}.$$

The first equality follows by (a4-3) of Definition \ref{Weak-Hopf-quasigroup}, the second and seventh ones by (\ref{anti-antipode-1}) and the commutativity of $\mu_{H}$, the third and eight ones by the commutativity of $\mu_{H}$ and the fourth one by (\ref{pi-antipode-composition-4}). The fifth equality relies on (a4-7) of Definition \ref{Weak-Hopf-quasigroup} and the sixth one follows by the coassociativity of $\delta_{H}$.  In the ninth one we used (\ref{pi-antipode-composition-3}) and finally the last one follows by (\ref{mu-assoc-5}).

The proof for a cocommutative
weak Hopf quasigroup is similar using that $\Pi^{L}_{H}=\overline{\Pi}^{L}_{H}$
and $\Pi^{R}_{H}=\overline{\Pi}^{R}_{H}$. 
\end{proof}

\section{The fundamental theorem of Hopf modules}

In the following definition we introduce the notion of right-right $H$-Hopf module for a weak Hopf quasigroup $H$. Note that if $H$ is a Hopf quasigroup and ${\mathcal C}$ is the symmetric monoidal category ${\Bbb F}-Vect$, we get the notion defined by Brzezi\'nski in \cite{Brz}.

\begin{definition}
\label{Hopf-module}
Let $H$ be a  weak Hopf quasigroup and $M$  an object  in ${\mathcal
C}$. We say that $(M, \phi_{M}, \rho_{M})$ is a right-right $H$-Hopf module if the following axioms hold:
\begin{itemize}
\item[(c1)] The pair $(M, \rho_{M})$ is a right $H$-comodule, i.e. $\rho_{M}:M \rightarrow M\ot H$ is a morphism 
such that $(M\ot \varepsilon_{H})\co \rho_{M}=id_{M}$ and $(\rho_{M}\ot H)\co \rho_{M}=(M\ot \delta_{H})\co \rho_{M}$.
\item[(c2)] The morphism $\phi_{M}:M\ot H\rightarrow M$ satisfies:
\begin{itemize}
\item[(c2-1)] $\phi_{M}\co (M\ot \eta_{H})=id_{M}.$
\item[(c2-2)] $\rho_{M}\co \phi_{M}=(\phi_{M}\ot \mu_{H})\co (M\ot c_{H,H}	\ot H)\co (\rho_{M}\ot \delta_{H})$, i.e.
$\phi_{M}$ is a morphism of right $H$-comodules with the codiagonal coaction on $M\ot H$.
\end{itemize}
\item[(c3)] $\phi_{M}\circ (\phi_{M}\ot \lambda_H)\circ (M\ot \delta_{H})=\phi_{M}\co (M\ot \Pi_{H}^{L}).$
\item[(c4)] $\phi_{M}\circ (\phi_{M}\ot H)\circ (M\ot \lambda_H\ot H)\circ (M\ot \delta_H)=\phi_{M}\co (M\ot \Pi_{H}^{R}).$
\item[(c5)] $\phi_{M}\circ (\phi_{M}\ot H)\circ (M\ot \Pi_{H}^{L}\ot H)\circ (M\ot \delta_H)=\phi_{M}.$
\end{itemize}
\end{definition}

\begin{remark}
Obviously, if $H$ is a  weak Hopf quasigroup, the triple $(H, \phi_{H}=\mu_{H}, \rho_{H}=\delta_{H})$ is a  right-right $H$-Hopf module. Moreover,  if $(M, \phi_{M}, \rho_{M})$ is a right-right $H$-Hopf module, the axiom (c5)  is equivalent to 
\begin{equation}
\label{new-c5}
\phi_{M}\circ (\phi_{M}\ot \Pi_{H}^{R})\circ (M\ot \delta_{H})=\phi_{M}.
\end{equation}
because by (c3) and (c4) of Definition \ref{Hopf-module} we have that 
\begin{equation}
\label{new-c5-1}
\phi_{M}\circ (\phi_{M}\ot \Pi_{H}^{R})\circ (M\ot \delta_{H})=\phi_{M}\circ (\phi_{M}\ot H)\circ (M\ot \Pi_{H}^{L}\ot H)\circ (M\ot \delta_H).
\end{equation}
\end{remark}
Also, composing in (c2-2) with $M\ot \eta_{H}$ and $M\ot \varepsilon_{H}$ we have that
\begin{equation}
\label{new-c2-2-1}
\phi_{M}\circ (M\ot \Pi_{H}^{R})\circ \rho_{M}=id_{M}.
\end{equation}
Finally, by (c5) and (\ref{new-c5}) we obtain
\begin{equation}
\label{new-c5-2}
\phi_{M}\circ (\phi_{M}\ot H)\circ (M\ot \Pi_{H}^{L}\ot H)\co  (M\ot (\delta_{H}\co \eta_{H}))=id_{M},
\end{equation}
\begin{equation}
\label{new-c5-3}
\phi_{M}\circ (\phi_{M}\ot \Pi_{H}^{R})\circ (M\ot (\delta_{H}\co \eta_{H}))=id_{M}.
\end{equation}

\begin{proposition}
\label{idempotent}
Let $H$ be a weak Hopf quasigroup and $(M, \phi_{M}, \rho_{M})$  a right-right $H$-Hopf module. The endomorphism $q_{M}:=\phi_{M}\co (M\ot \lambda_{H})\co \rho_{M}:M\rightarrow M$  satisfies 
\begin{equation}
\label{idemp-1}
\rho_{M}\co q_{M}=(M\ot \Pi_{H}^{L})\co\rho_{M}\co q_{M}
\end{equation}
 and, as a consequence, is an idempotent. Moreover, if $M^{co H}$ (object of coinvariants)  is the image of $q_{M}$ and $p_{M}:M\rightarrow M^{co H}$, $i_{M}:M^{co H}\rightarrow M$ the morphisms such that $q_{M}=i_{M}\co p_{M}$ and 
 $id_{M^{co H}}=p_{M}\co i_{M}$, 
$$
\setlength{\unitlength}{3mm}
\begin{picture}(30,4)
\put(3,2){\vector(1,0){4}} \put(11,2.5){\vector(1,0){10}}
\put(11,1.5){\vector(1,0){10}} \put(1,2){\makebox(0,0){$M^{co H}$}}
\put(9,2){\makebox(0,0){$M$}} \put(24,2){\makebox(0,0){$M\ot H$}}
\put(5.5,3){\makebox(0,0){$i_{M}$}}
\put(16,3.5){\makebox(0,0){$ \rho_{M}$}}
\put(16,0.15){\makebox(0,0){$(M\ot \overline{\Pi}_{H}^{R})\co\rho_{M}$}}
\end{picture}
$$
is an equalizer diagram.
\end{proposition}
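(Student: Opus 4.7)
The plan is to prove the three assertions (the identity (\ref{idemp-1}), the idempotence of $q_M$, and the equalizer property) in that order, since each subsequent claim will follow easily from the previous.

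To establish (\ref{idemp-1}) I would start by expanding $\rho_{M}\co q_{M}$ using the definition of $q_{M}$ and axiom (c2-2) of Definition \ref{Hopf-module}, which yields
\[
\rho_{M}\co q_{M}=(\phi_{M}\ot \mu_{H})\co (M\ot c_{H,H}\ot H)\co (\rho_{M}\ot \delta_{H})\co (M\ot \lambda_{H})\co \rho_{M}.
\]
Next, I would commute $\delta_{H}$ past $\lambda_{H}$ via the anticomultiplicative property (\ref{anti-antipode-2}) of Theorem \ref{anti-antipode}, rewrite $(\rho_{M}\ot H)\co \rho_{M}=(M\ot \delta_{H})\co \rho_{M}$, and reorganize using coassociativity of $\delta_{H}$ and naturality of $c_{H,H}$. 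The second tensor factor in the resulting expression then takes the shape $\mu_{H}\co (H\ot \lambda_{H})\co \delta_{H}$ applied to a sub-expression coming from $\rho_{M}^{(2)}(m)$; since $\Pi_{H}^{L}=id_{H}\ast \lambda_{H}$ by definition, that second factor lies in the image of $\Pi_{H}^{L}$, and so (\ref{idemp-1}) follows from the idempotence of $\Pi_{H}^{L}$ established in Proposition \ref{idemp}. I expect the main obstacle to be the careful bookkeeping of tensor positions under the braiding $c_{H,H}$, which is subtle outside the symmetric setting.

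Given (\ref{idemp-1}), idempotence of $q_{M}$ is a short calculation. By definition
$q_{M}\co q_{M}=\phi_{M}\co (M\ot \lambda_{H})\co \rho_{M}\co q_{M}$, and inserting $M\ot \Pi_{H}^{L}$ via (\ref{idemp-1}) gives
$q_{M}\co q_{M}=\phi_{M}\co (M\ot (\lambda_{H}\co \Pi_{H}^{L}))\co \rho_{M}\co q_{M}$. Since $\lambda_{H}\co \Pi_{H}^{L}=\Pi_{H}^{R}\co \Pi_{H}^{L}$ by (\ref{pi-antipode-composition-2}), and using (\ref{idemp-1}) once more to reabsorb $(M\ot \Pi_{H}^{L})\co \rho_{M}\co q_{M}$ into $\rho_{M}\co q_{M}$, we obtain $q_{M}\co q_{M}=\phi_{M}\co (M\ot \Pi_{H}^{R})\co \rho_{M}\co q_{M}=q_{M}$, where the last step uses (\ref{new-c2-2-1}).

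For the equalizer assertion, I would first check that $i_{M}$ equalizes: composing (\ref{idemp-1}) on the right with $p_{M}$ (which is a split epimorphism, since $p_{M}\co i_{M}=id_{M^{co H}}$) yields $\rho_{M}\co i_{M}=(M\ot \Pi_{H}^{L})\co \rho_{M}\co i_{M}$, and the identity $\overline{\Pi}_{H}^{R}\co \Pi_{H}^{L}=\Pi_{H}^{L}$ from (\ref{pi-composition-2}) then converts $\Pi_{H}^{L}$ into $\overline{\Pi}_{H}^{R}$. For the universal property, given $t:X\rightarrow M$ satisfying $\rho_{M}\co t=(M\ot \overline{\Pi}_{H}^{R})\co \rho_{M}\co t$, I would define the factorization as $u=p_{M}\co t$ and verify $i_{M}\co u=q_{M}\co t=t$: substituting the hypothesis on $t$ into the definition of $q_{M}\co t$ reduces the expression to $\phi_{M}\co (M\ot (\lambda_{H}\co \overline{\Pi}_{H}^{R}))\co \rho_{M}\co t$, which by (\ref{pi-antipode-composition-4}) equals $\phi_{M}\co (M\ot \Pi_{H}^{R})\co \rho_{M}\co t=t$ thanks to (\ref{new-c2-2-1}). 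Uniqueness of $u$ is immediate from $p_{M}\co i_{M}=id_{M^{co H}}$.
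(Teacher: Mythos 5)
Your proposal is correct and follows essentially the same route as the paper: (\ref{idemp-1}) via (c2-2), the anticomultiplicativity (\ref{anti-antipode-2}), coassociativity and naturality of the braiding to expose a $\Pi_{H}^{L}$ factor and invoke its idempotence; then idempotence of $q_{M}$ by reducing $\lambda_{H}\co \Pi_{H}^{L}$ to $\Pi_{H}^{R}$ modulo (\ref{idemp-1}) and finishing with (\ref{new-c2-2-1}); and the equalizer via the identity $\phi_{M}\co (M\ot (\lambda_{H}\co \overline{\Pi}_{H}^{R}))\co \rho_{M}=id_{M}$, which is exactly the splitting the paper uses when it cites the split cofork lemma. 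The only cosmetic differences are that you use (\ref{pi-antipode-composition-2}) directly where the paper detours through $\overline{\Pi}_{H}^{R}$, and you unfold the universal property by hand rather than quoting Mac Lane.
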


\begin{proof} 
The equality (\ref{idemp-1}) holds because
\begin{itemize}
\item[ ]$\hspace{0.38cm} \rho_{M}\co q_{M}  $

\item[ ]$= (\phi_{M}\ot \mu_{H})\co (M\ot  c_{H,H}\ot H)\co (\rho_{M}\ot (\delta_{H}\co \lambda_{H}))\co \rho_{M} $

\item[ ]$= (\phi_{M}\ot \mu_{H})\co (M\ot  c_{H,H}\ot H)\co (\rho_{M}\ot ((\lambda_{H}\ot \lambda_{H})\co c_{H,H}\co \delta_{H}))\co \rho_{M}  $

\item[ ]$= ((\phi_{M}\co (M\ot \lambda_{H}))\ot \Pi_{H}^{L})\co (M\ot (c_{H,H}\co \delta_{H}))\co \rho_{M} $

\item[ ]$=  ((\phi_{M}\co (M\ot \lambda_{H}))\ot (\Pi_{H}^{L}\co \Pi_{H}^{L}))\co (M\ot (c_{H,H}\co \delta_{H}))\co \rho_{M}$

\item[ ]$= (M\ot \Pi_{H}^{L})\co\rho_{M}\co q_{M}$

\end{itemize}
where the first equality follows by (c2-2) of Definition \ref{Hopf-module}, the second one by (\ref{anti-antipode-2}), the third one relies on (c1) of Definition \ref{Hopf-module} as well as the naturality of the braiding, the fourth one is a consequence of  the properties of $\Pi_{H}^{L}$ and the last one uses the arguments of the  three first identities but in the inverse order.

On the other hand, $q_{M}$ is an idempotent. Indeed, 
\begin{itemize}
\item[ ]$\hspace{0.38cm} q_{M}\co q_{M}  $

\item[ ]$= \phi_{M} \co (M\ot \lambda_{H})\co \rho_{M}\co q_{M}$

\item[ ]$= \phi_{M} \co (M\ot (\lambda_{H}\co \Pi_{H}^{L}) )\co \rho_{M}\co q_{M}  $

\item[ ]$=  \phi_{M} \co (M\ot (\lambda_{H}\co \overline{\Pi}_{H}^{R}\co \Pi_{H}^{L}) )\co \rho_{M}\co q_{M}$

\item[ ]$=\phi_{M} \co (M\ot  \Pi_{H}^{R} )\co \rho_{M}\co q_{M}   $

\item[ ]$=  q_{M}.$

\end{itemize}
 In the last equalities, the first one follows by definition, the second one by (\ref{idemp-1}), the third one by (\ref{pi-composition-2}), the fourth one by (\ref{pi-antipode-composition-4}) and (\ref{idemp-1}) and the last one by  (\ref{new-c2-2-1}).

Finally, by (\ref{pi-antipode-composition-4}) and (\ref{new-c2-2-1}) $\phi_{M} \co (M\ot (\lambda_{H}\co \overline{\Pi}_{H}^{R}) )\co \rho_{M}=\phi_{M} \co (M\ot \Pi_{H}^{R}) )\co \rho_{M}=id_{M}.$
Then, 

$$
\setlength{\unitlength}{3mm}
\begin{picture}(30,4)
\put(3,2){\vector(1,0){4}} \put(11,2.5){\vector(1,0){10}}
\put(11,1.5){\vector(1,0){10}} \put(1,2){\makebox(0,0){$M^{co H}$}}
\put(9,2){\makebox(0,0){$M$}} \put(24,2){\makebox(0,0){$M\ot H$}}
\put(5.5,3){\makebox(0,0){$i_{M}$}}
\put(16,3.5){\makebox(0,0){$ \rho_{M}$}}
\put(16,0.15){\makebox(0,0){$(M\ot \overline{\Pi}_{H}^{R})\co\rho_{M}$}}
\end{picture}
$$

is a split cofork  \cite{Mac} and thus an equalizer diagram.

\end{proof}

\begin{remark}
\label{idem-2}
Note that, in the conditions of Proposition (\ref{idempotent}), by (\ref{pi-composition-1}) and (\ref{pi-composition-2}), we obtain that
$$
\setlength{\unitlength}{3mm}
\begin{picture}(30,4)
\put(3,2){\vector(1,0){4}} \put(11,2.5){\vector(1,0){10}}
\put(11,1.5){\vector(1,0){10}} \put(1,2){\makebox(0,0){$M^{co H}$}}
\put(9,2){\makebox(0,0){$M$}} \put(24,2){\makebox(0,0){$M\ot H$}}
\put(5.5,3){\makebox(0,0){$i_{M}$}}
\put(16,3.5){\makebox(0,0){$ \rho_{M}$}}
\put(16,0.15){\makebox(0,0){$(M\ot\Pi_{H}^{L})\co\rho_{M}$}}
\end{picture}
$$
is also an equalizer diagram.

Moreover, by the comodule condition and (c4) of Definition \ref{Hopf-module} we have 
\begin{equation}
\label{new-c5-2-1}
\phi_{M}\circ (q_{M}\ot H)\co \rho_{M}=id_{M}.
\end{equation}

Finally, the following identities hold:
\begin{equation}
\label{new-c5-2-2}
\rho_{M}\co \phi_{M}\co (i_{M}\ot H)=(\phi_{M}\ot H)\co (i_{M}\ot \delta_{H}),
\end{equation}
\begin{equation}
\label{new-c5-2-3}
p_{M}\co \phi_{M}\co (i_{M}\ot H)=p_{M}\co \phi_{M}\co (i_{M}\ot \Pi_{H}^{L}),
\end{equation}
\begin{equation}
\label{new-c5-2-4}
p_{M}\co \phi_{M}\co (i_{M}\ot H)= p_{M}\co \phi_{M}\co (i_{M}\ot \overline{\Pi}_{H}^{L}).
\end{equation}
Indeed:

\begin{itemize}
\item[ ]$\hspace{0.38cm} \rho_{M}\co \phi_{M}\co (i_{M}\ot H)  $

\item[ ]$= (\phi_{M}\ot \mu_{H})\co (M\ot c_{H,H}\ot H)\co ((\rho_{M}\co i_{M})\ot \delta_{H})$

\item[ ]$=   (\phi_{M}\ot \mu_{H})\co (M\ot c_{H,H}\ot H)\co (((M\ot \overline{\Pi}_{H}^{R})\co \rho_{M}\co i_{M})\ot \delta_{H})$

\item[ ]$= (\phi_{M}\ot (\mu_{H}\co (\overline{\Pi}_{H}^{R}\ot H)) )\co (M\ot c_{H,H}\ot H)\co
 ((\rho_{M}\co i_{M})\ot \delta_{H})  $

\item[ ]$= (\phi_{M}\ot (\varepsilon_{H}\co \mu_{H})\ot H)\co (M\ot c_{H,H}\ot \delta_{H})\co ((\rho_{M}\co i_{M})\ot \delta_{H})  $

\item[ ]$= (((M\ot \varepsilon_{H})\co \rho_{M}\co \phi_{M})\ot H)\co (i_{M}\ot \delta_{H}) $

\item[ ]$= (\phi_{M}\ot H)\co (i_{M}\ot \delta_{H}) .$

\end{itemize}

The first equality follows from (c2-2) of Definition \ref{Hopf-module}, the second one by Proposition \ref{idempotent} and the third one by the naturality of the braiding. The fourth equality is a consequence of (\ref{mu-pi-r-var}). In the fifth one we used the coassociativity of $\delta_{H}$ and (c2-2) of Definition \ref{Hopf-module}. The last one follows by  (c1) of Definition \ref{Hopf-module}.

On the other hand, by (\ref{new-c5-2-2}) and (a4-6) of Definition \ref{Weak-Hopf-quasigroup} we have

\begin{itemize}
\item[ ]$\hspace{0.38cm} p_{M}\co \phi_{M}\co (i_{M}\ot H)  $

\item[ ]$= p_{M}\co q_{M}\co \phi_{M}\co (i_{M}\ot H)$

\item[ ]$=  p_{M}\co \phi_{M}\co (M\ot \lambda_{H})\co \rho_{M}\co \phi_{M}\co (i_{M}\ot H) $

\item[ ]$= p_{M}\co \phi_{M}\co (\phi_{M}\ot \lambda_{H})\co (i_{M}\ot \delta_{H}) $

\item[ ]$= p_{M}\co \phi_{M}\co (i_{M}\ot \Pi_{H}^{L}) .$

\end{itemize}

Finally, composing with $ M^{co H}\ot \overline{\Pi}_{H}^{L}$ in (\ref{new-c5-2-3}) and using (\ref{pi-composition-1}) we obtain (\ref{new-c5-2-4}).

\end{remark}

\begin{proposition}
\label{tensor-idempotent}
Let $H$ be a weak Hopf quasigroup, $(M, \phi_{M}, \rho_{M})$  a right-right $H$-Hopf module. The endomorphism 
$$\nabla_{M}:=(p_{M}\ot H)\co \rho_{M}\co \phi_{M}\co (i_{M}\ot H):M^{co H}\ot H\rightarrow M^{co H}\ot H$$
is an idempotent and the equalities 
\begin{equation}
\label{tensor-idempotent-1}
\nabla_{M}=((p_{M}\co \phi_{M})\ot H)\co  (i_{M}\ot \delta_{H}), 
\end{equation}
\begin{equation}
\label{tensor-idempotent-2}
(M^{co H}\ot \delta_{H})\co \nabla_{M}= (\nabla_{M}\ot H)\co (M^{co H}\ot \delta_{H}).
\end{equation}
\begin{equation}
\label{tensor-idempotent-3}
 \nabla_{M}= (M^{co H}\ot \mu_{H})\co ((\nabla_{M}\co (M^{co H}\ot \eta_{H}))\ot H).
\end{equation}
hold.

\end{proposition}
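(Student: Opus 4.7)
The plan is to establish (\ref{tensor-idempotent-1}) first, since the explicit formula it provides streamlines all subsequent arguments. This is a one-line computation: starting from $\nabla_{M}=(p_{M}\ot H)\co \rho_{M}\co \phi_{M}\co (i_{M}\ot H)$ and invoking identity (\ref{new-c5-2-2}) from Remark \ref{idem-2} (that is, $\rho_{M}\co \phi_{M}\co (i_{M}\ot H)=(\phi_{M}\ot H)\co (i_{M}\ot \delta_{H})$) directly inside $\nabla_{M}$, one obtains $\nabla_{M}=((p_{M}\co \phi_{M})\ot H)\co (i_{M}\ot \delta_{H})$. From this, (\ref{tensor-idempotent-2}) will follow immediately by substituting the formula into both sides and using coassociativity of $\delta_{H}$, since both sides reduce to $((p_{M}\co \phi_{M})\ot H\ot H)\co (i_{M}\ot (\delta_{H}\ot H)\co \delta_{H})$.

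Idempotency is equally direct: the composite $\nabla_{M}\co \nabla_{M}$ contains $i_{M}\co p_{M}=q_{M}$ in the middle, so
$$\nabla_{M}\co \nabla_{M}=(p_{M}\ot H)\co \rho_{M}\co \phi_{M}\co (q_{M}\ot H)\co \rho_{M}\co \phi_{M}\co (i_{M}\ot H),$$
and identity (\ref{new-c5-2-1}), $\phi_{M}\co (q_{M}\ot H)\co \rho_{M}=id_{M}$, collapses the interior block to the identity, leaving exactly $\nabla_{M}$.

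The main obstacle is (\ref{tensor-idempotent-3}), which requires bringing $\delta_{H}\co \eta_{H}$ into the picture. The plan is to start from the reformulation $\nabla_{M}=((p_{M}\co \phi_{M}\co (i_{M}\ot H))\ot H)\co (M^{co H}\ot \delta_{H})$ coming from (\ref{tensor-idempotent-1}), then use (\ref{new-c5-2-4}) to replace $p_{M}\co \phi_{M}\co (i_{M}\ot H)$ by $p_{M}\co \phi_{M}\co (i_{M}\ot \overline{\Pi}_{H}^{L})$, and finally apply (\ref{delta-pi-l-var}), namely $(\overline{\Pi}_{H}^{L}\ot H)\co \delta_{H}=(H\ot \mu_{H})\co ((\delta_{H}\co \eta_{H})\ot H)$, to convert the coproduct into $\delta_{H}\co \eta_{H}$ followed by multiplication with the identity on $H$. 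Sliding the resulting $\mu_{H}$ outward produces $\nabla_{M}=(M^{co H}\ot \mu_{H})\co ((((p_{M}\co \phi_{M})\ot H)\co (i_{M}\ot \delta_{H}\co \eta_{H}))\ot H)$, and (\ref{tensor-idempotent-1}) identifies the inner bracket as $\nabla_{M}\co (M^{co H}\ot \eta_{H})$. The subtle point is choosing (\ref{new-c5-2-4}) rather than (\ref{new-c5-2-3}), so that the companion identity (\ref{delta-pi-l-var}) supplies precisely the rewriting needed to produce a $\delta_{H}\co \eta_{H}$ factor.
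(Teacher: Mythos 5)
Your proposal is correct and follows essentially the same route as the paper: (\ref{tensor-idempotent-1}) from (\ref{new-c5-2-2}), (\ref{tensor-idempotent-2}) from coassociativity, idempotency from $i_{M}\co p_{M}=q_{M}$ together with (\ref{new-c5-2-1}), and (\ref{tensor-idempotent-3}) from the combination of (\ref{tensor-idempotent-1}), (\ref{new-c5-2-4}) and (\ref{delta-pi-l-var}). The only cosmetic difference is that you derive (\ref{tensor-idempotent-3}) by transforming $\nabla_{M}$ into the right-hand side, whereas the paper runs the same chain of identities in the opposite direction.
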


\begin{proof} Trivially, by (\ref{new-c5-2-1}) we have that 
$$\nabla_{M}\co \nabla_{M}=(p_{M}\ot H)\co \rho_{M}\co \phi_{M}\circ (q_{M}\ot H)\co \rho_{M}\co\phi_{M}\co (i_{M}\ot H)=\nabla_{M}.$$
The equality (\ref{tensor-idempotent-1}) follows from (\ref{new-c5-2-2}) and (\ref{tensor-idempotent-2}) is a consequence of (\ref{tensor-idempotent-1}) and the coassociativity of $\delta_{H}$. Finally, (\ref{tensor-idempotent-3}) holds because, by (\ref{tensor-idempotent-1}), (\ref{delta-pi-l-var}) and (\ref{new-c5-2-4}) we have
$$(M^{co H}\ot \mu_{H})\co ((\nabla_{M}\co (M^{co H}\ot \eta_{H}))\ot H)=((p_{M}\co \phi_{M}\co (M\ot \overline{\Pi}_{H}^{L}))\ot H)\co (i_{M}\ot \delta_{H})=\nabla_{M}.$$

\end{proof}

\begin{remark}
\label{Th-mod} In the conditions of Proposition \ref{tensor-idempotent} we define the morphisms
$$\omega_{M}:M^{co H}\ot H\rightarrow M,\;\;\;\;\;
\omega_{M}^{\prime}:M\rightarrow M^{co H}\ot H$$  by
$\omega_{M}=\phi_{M}\co (i_{M}\otimes H)$ and
$\omega_{M}^{\prime}=(p_{M}\otimes H)\co \rho_{M}$. Then, 
 $\omega_{M}\circ \omega_{M}^{\prime}=id_{M}$ and
$\nabla_{M}=\omega_{M}^{\prime}\co \omega_{M}$. Also, we have a commutative diagram
$$
\unitlength=1mm \special{em:linewidth 0.4pt} \linethickness{0.4pt}
\begin{picture}(64.00,37.00)
\put(10.00,20.00){\vector(1,0){30.00}}
\put(8.00,18.00){\vector(4,-3){16.00}}
\put(27.00,6.00){\vector(1,1){12.00}}
\put(8.00,22.00){\vector(3,2){16.00}}
\put(26.00,32.00){\vector(4,-3){12.00}}

\put(0.00,20.00){\makebox(0,0)[cc]{$M^{co H}\ot H$}}
\put(50.00,20.00){\makebox(0,0)[cc]{$M^{co H}\ot H$}}
\put(24.00,36.00){\makebox(0,0)[cc]{$M$}}
\put(24.00,2.00){\makebox(0,0)[cc]{$M^{co H}\times H$}}
\put(13.00,29.00){\makebox(0,0)[cc]{$\omega_{M}$}}
\put(35.00,30.00){\makebox(0,0)[cc]{$\omega_{M}^{\prime}$}}
\put(9.00,10.00){\makebox(0,0)[cc]{$p_{M^{co H}\ot H}$}}
\put(39.00,9.00){\makebox(0,0)[cc]{$i_{M^{co H}\ot H}$}}
\put(23.00,22.00){\makebox(0,0)[cc]{$\nabla_{M}$}}

\end{picture}
$$
where $M^{co H}\times H$ denotes the image of $\nabla_{M}$ and $p_{M^{co H}\ot H}$,  $i_{M^{co H}\ot H}$ are the morphisms such that $p_{M^{co H}\ot H}\co i_{M^{co H}\ot H}=id_{M^{co H}\times H}$ and $i_{M^{co H}\ot H}\co p_{M^{co H}\ot H}=\nabla_{M}$. Therefore,
the morphism $\alpha_{M}=p_{M^{co H}\ot H}\circ \omega^{\prime}_{M}$ is an
isomorphism of right $H$-modules (i.e. $\rho_{M^{co H}\times H}\co \alpha=(\alpha\ot H)\co \rho_{M}$)
with inverse $\alpha^{-1}_{M}=\omega_{M}\circ i_{M^{co H}\ot H}$. The
comodule structure of $M^{co H}\times H$ is the one induced by
the isomorphism $\alpha$ and it is equal to
$$
\rho_{M^{co H}\times H}=(p_{M^{co H}\ot H}\ot
H)\co (M^{co H}\ot \delta_{H})\co i_{M^{co H}\ot H}.
$$
\end{remark}

\begin{proposition}
\label{isomorphism}
Let $H$ be a weak Hopf quasigroup and  $(M, \phi_{M}, \rho_{M})$, $(N, \phi_{N}, \rho_{N})$ right-right $H$-Hopf modules. If there exists a right $H$-comodule isomorphism $\alpha:M\rightarrow N$, the triple $(M,\phi_{M}^{\alpha}=\alpha^{-1}\co \phi_{N}\co (\alpha\ot H), \rho_{M})$ is a right-right $H$-Hopf module.
\end{proposition}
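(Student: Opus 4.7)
The strategy is pure transport of structure along the comodule isomorphism $\alpha$. All five axioms (c1)--(c5) will follow by expanding the definition $\phi_{M}^{\alpha}=\alpha^{-1}\co \phi_{N}\co (\alpha\ot H)$, sliding $\alpha\co \alpha^{-1}=id_{N}$ (and its mirror) through the middle, and applying the corresponding axiom that already holds for $(N,\phi_{N},\rho_{N})$.

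First I would dispatch (c1) trivially, since $\rho_{M}$ is unchanged, and (c2-1) by
$$\phi_{M}^{\alpha}\co (M\ot \eta_{H})=\alpha^{-1}\co \phi_{N}\co (\alpha\ot H)\co (M\ot \eta_{H})=\alpha^{-1}\co \phi_{N}\co (N\ot \eta_{H})\co \alpha=\alpha^{-1}\co \alpha=id_{M}.$$
For (c2-2) I would use that $\alpha$ is a right $H$-comodule morphism, i.e.\ $\rho_{N}\co \alpha=(\alpha\ot H)\co \rho_{M}$, which (since $\alpha$ is an isomorphism) also gives $\rho_{M}\co \alpha^{-1}=(\alpha^{-1}\ot H)\co \rho_{N}$. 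Then
\begin{align*}
\rho_{M}\co \phi_{M}^{\alpha}&=(\alpha^{-1}\ot H)\co \rho_{N}\co \phi_{N}\co (\alpha\ot H)\\
&=(\alpha^{-1}\ot H)\co (\phi_{N}\ot \mu_{H})\co (N\ot c_{H,H}\ot H)\co (\rho_{N}\ot \delta_{H})\co (\alpha\ot H)\\
&=(\phi_{M}^{\alpha}\ot \mu_{H})\co (M\ot c_{H,H}\ot H)\co (\rho_{M}\ot \delta_{H}),
\end{align*}
where the second equality uses (c2-2) for $N$ and the third uses $\rho_{N}\co \alpha=(\alpha\ot H)\co \rho_{M}$ together with the definition of $\phi_{M}^{\alpha}$.

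The axioms (c3), (c4), (c5) are identities between two morphisms $M\ot H\to M$ whose only ``$M$-dependence'' is a single outer occurrence of the action on the left and a single inner occurrence of the action (or of the identity of $M$) on the right; the rest is structure of $H$. Each identity is therefore transported by inserting $\alpha\co \alpha^{-1}=id_{N}$ in the appropriate place. For instance, for (c4):
\begin{align*}
\phi_{M}^{\alpha}\co (\phi_{M}^{\alpha}\ot H)\co (M\ot \lambda_{H}\ot H)\co (M\ot \delta_{H})&=\alpha^{-1}\co \phi_{N}\co (\phi_{N}\ot H)\co (N\ot \lambda_{H}\ot H)\co (N\ot \delta_{H})\co \alpha\\
&=\alpha^{-1}\co \phi_{N}\co (N\ot \Pi_{H}^{R})\co \alpha\\
&=\alpha^{-1}\co \phi_{N}\co (\alpha\ot H)\co (M\ot \Pi_{H}^{R})=\phi_{M}^{\alpha}\co (M\ot \Pi_{H}^{R}),
\end{align*}
and (c3), (c5) follow by precisely the same pattern, invoking the corresponding axiom for $(N,\phi_{N},\rho_{N})$.

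There is no genuine obstacle; the only point requiring care is the bookkeeping in (c2-2), where one must use both forms of the comodule-morphism condition for $\alpha$ (for $\alpha$ itself and for $\alpha^{-1}$) to move the comodule structures of $M$ and $N$ past each other. Once that is in place, (c3)--(c5) are immediate conjugations by $\alpha$.
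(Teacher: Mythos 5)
Your proposal is correct and is essentially the paper's own argument: the paper's proof is a one-line remark that everything follows by transport of structure from the identity $\rho_{M}=(\alpha^{-1}\ot H)\co \rho_{N}\co \alpha$, which is exactly the conjugation argument you carry out in detail (your only blemish is the harmless notational slip of writing $\co\, \alpha$ instead of $\co\, (\alpha\ot H)$ at the end of the displayed computation for (c4)).
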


\begin{proof} The proof follows easily because, if $\alpha$ is a right $H$-comodule isomorphism,  $\rho_{M}=(\alpha^{-1}\ot H)\co \rho_{N}\co \alpha$ holds. 
\end{proof}

\begin{proposition}
\label{structure-times}
Let $H$ be a weak Hopf quasigroup, $(M, \phi_{M}, \rho_{M})$  a right-right $H$-Hopf module. The triple $(M^{co H}\times H, \phi_{M^{co H}\times H}, \rho_{M^{co H}\ot H})$ where 
$$\phi_{M^{co H}\times H}=p_{M^{co H}\ot H}\co (M^{co H}\ot \mu_{H})\co (i_{M^{co H}\ot H}\ot H),$$
and $\rho_{M^{co H}\ot H}$ is the coaction defined in Remark \ref{Th-mod}, is a right-right $H$-Hopf module.
\end{proposition}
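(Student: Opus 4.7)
The approach is to verify axioms (c1)--(c5) of Definition \ref{Hopf-module} directly for the pair $(\phi_{M^{co H}\times H},\rho_{M^{co H}\ot H})$, exploiting the compatibility of the idempotent $\nabla_M$ with the multiplication and comultiplication of $H$. Axiom (c1) reduces to the coassociativity and counitality of $\delta_H$ together with $p_{M^{co H}\ot H}\co i_{M^{co H}\ot H}=id_{M^{co H}\times H}$; the middle factor $\nabla_M = i_{M^{co H}\ot H}\co p_{M^{co H}\ot H}$ appearing in $(\rho_{M^{co H}\ot H}\ot H)\co \rho_{M^{co H}\ot H}$ is eliminated using the identity (\ref{tensor-idempotent-2}). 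Axiom (c2-1) is immediate from the unit axiom for $\mu_H$ and the retraction $p_{M^{co H}\ot H}\co i_{M^{co H}\ot H}=id$.

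For (c2-2), the plan is to compute both sides. The left-hand side becomes $(p_{M^{co H}\ot H}\ot H)\co (M^{co H}\ot (\delta_H\co \mu_H))\co (i_{M^{co H}\ot H}\ot H)$ after absorbing the inner $\nabla_M$ via (\ref{tensor-idempotent-2}); then axiom (a1) of Definition \ref{Weak-Hopf-quasigroup} rewrites $\delta_H\co \mu_H$ to match the right-hand side, after using the naturality of $c_{H,H}$.

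Axioms (c3), (c4), (c5) follow the same template: after unfolding $\phi_{M^{co H}\times H}$ twice, an interior copy of $\nabla_M$ appears, which is rewritten via the formula (\ref{tensor-idempotent-1}) as $((p_M\co \phi_M)\ot H)\co (i_M\ot \delta_H)$. The resulting composite in each case reduces to an identity purely in $H$ applied inside the $\mu_H$-slot: axiom (a4-6) of Definition \ref{Weak-Hopf-quasigroup} yields (c3), axiom (a4-7) yields (c4), and identity (\ref{mu-assoc-1}) of Proposition \ref{mu-assoc} yields (c5). In each case one then collapses the resulting $(p_M\co\phi_M)\ot H$ back into $p_{M^{co H}\ot H}\co (M^{co H}\ot \mu_H)\co (i_{M^{co H}\ot H}\ot H)=\phi_{M^{co H}\times H}$ on the right-hand side.

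The main obstacle is the bookkeeping involving the idempotents $i_{M^{co H}\ot H}$, $p_{M^{co H}\ot H}$, $i_M$, $p_M$ together with $\nabla_M$. The twin compatibility identities (\ref{tensor-idempotent-1}) and (\ref{tensor-idempotent-2}) are the key technical tools: they let $\nabla_M$ be commuted past $M^{co H}\ot \mu_H$ and $M^{co H}\ot \delta_H$ in controlled ways, reducing every computation ultimately to intrinsic identities of the weak Hopf quasigroup $H$.
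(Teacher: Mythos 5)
Your overall skeleton is the right one --- verify (c1)--(c5) directly, note that (c1) and (c2-1) are essentially formal, handle (c2-2) with (\ref{tensor-idempotent-2}) and (a1), and for (c3)--(c5) unfold $\phi_{M^{co H}\times H}$ twice and rewrite the interior $\nabla_{M}$ via (\ref{tensor-idempotent-1}). Up to that point you match the paper. The gap is in the claim that each of (c3)--(c5) then ``reduces to an identity purely in $H$ applied inside the $\mu_{H}$-slot.'' It does not: in the composite $\phi_{M^{co H}\times H}\co (\phi_{M^{co H}\times H}\ot -)$ the two copies of $\mu_{H}$ are separated by $p_{M^{co H}\ot H}$ followed by $i_{M^{co H}\ot H}$, i.e.\ by $\nabla_{M}$, and since $\mu_{H}$ is not associative (and (\ref{tensor-idempotent-3}) only gives a one-sided unit-compatibility), $\nabla_{M}$ cannot be commuted past $M^{co H}\ot \mu_{H}$ to make the two multiplications adjacent. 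Concretely, for (c5) you would need $(M^{co H}\ot \mu_{H})\co(\nabla_{M}\ot H)\co(M^{co H}\ot \mu_{H}\ot H)$ to collapse to $M^{co H}\ot(\mu_{H}\co(\mu_{H}\ot H))$ before (\ref{mu-assoc-1}) could fire, and that step fails; the paper in fact never uses (\ref{mu-assoc-1}) for (c5), but instead routes through (\ref{pi-delta-mu-pi-3}), (\ref{2-mu-delta-pi-l}) and (\ref{pi-delta-mu-pi-1}).

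What actually makes the argument close is a family of mixed identities relating the $M$-action to the projections, above all (\ref{new-c5-2-3}), $p_{M}\co \phi_{M}\co (i_{M}\ot H)=p_{M}\co \phi_{M}\co (i_{M}\ot \Pi_{H}^{L})$, which lets one insert and delete $\Pi_{H}^{L}$ in the slot that feeds the $M$-action after (\ref{tensor-idempotent-1}) has split $\nabla_{M}$ into $((p_{M}\co\phi_{M})\ot H)\co(i_{M}\ot\delta_{H})$. Your cited identities (a4-6) and (a4-7) do appear as the pivotal $H$-identities in (c3) and (c4) respectively, but only after (a1) has turned $\delta_{H}\co\mu_{H}$ into $(\mu_{H}\ot\mu_{H})\co\delta_{H\ot H}$, and the cleanup afterwards still requires (\ref{2-mu-delta-pi-l}), (\ref{mu-pi-l}), (\ref{pi-delta-mu-pi-1}), naturality of the braiding, and --- for (c4) --- axiom (a2), none of which your sketch accounts for. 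As written, the proposal would stall exactly at the point where the interposed idempotent has to be traded for statements about $\Pi_{H}^{L}$ and $\Pi_{H}^{R}$.
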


\begin{proof}
By Remark \ref{Th-mod} we have that $(M^{co H}\times H,  \rho_{M^{co H}\times H})$ is a right $H$-comodule and it is clear that (c2-1) of Definition \ref{Hopf-module} holds. On the other hand, by (\ref{tensor-idempotent-2}), 
$$ \rho_{M^{co H}\times H}\co  \phi_{M^{co H}\times H} 
= (p_{M^{co H}\ot H}\ot H)\co    (M^{co H}\ot (\delta_{H}\co \mu_{H}))\co 
(i_{M^{co H}\ot H}\ot H).  $$

Moreover, by (\ref{tensor-idempotent-2}), the properties of $\nabla_{M}$ and (a1) of Definition \ref{Weak-Hopf-quasigroup}
we obtain

\begin{itemize}
\item[ ]$\hspace{0.38cm} (\phi_{M^{co H}\times H}\ot \mu_{H})\co (M^{co H}\times H\ot c_{H,H}\ot H)\co (\rho_{M^{co H}\times H}\ot \delta_{H})  $

\item[ ]$= ((p_{M^{co H}\ot H}\co (M^{co H}\ot \mu_{H}))\ot H)\co (\nabla_{M}\ot H\ot \mu_{H})\co (M^{co H}\ot H\ot c_{H,H}\ot H) $
\item[ ]$\hspace{0.38cm} \co (M^{co H}\ot\delta_{H}\ot \delta_{H})\co (i_{M^{co H}\ot H}\ot H)$

\item[ ]$=(p_{M^{co H}\ot H}\ot H)\co (M^{co H}\ot  ((\mu_{H}\ot \mu_{H})\co \delta_{H\ot H}))\co ((\nabla_{M}\co
i_{M^{co H}\ot H})\ot H)  $

\item[ ]$= (p_{M^{co H}\ot H}\ot H)\co    (M^{co H}\ot (\delta_{H}\co \mu_{H}))\co 
(i_{M^{co H}\ot H}\ot H)  $

\end{itemize}
and then  (c2-2) of Definition \ref{Hopf-module} holds.

The proof for (c3) of Definition \ref{Hopf-module} is the following:
\begin{itemize}
\item[ ]$\hspace{0.38cm} \phi_{M^{co H}\times H}  \co (\phi_{M^{co H}\times H} \ot \lambda_{H})\co (M^{co H}\times H\ot \delta_{H})$

\item[ ]$=p_{M^{co H}\times H}  \co  ((p_{M}\co \phi_{M})\ot \mu_{H})\co (i_{M}\ot (\delta_{H}\co \mu_{H})\ot \lambda_{H})\co (i_{M^{co H}\times H}\ot \delta_{H})$

\item[ ]$=p_{M^{co H}\times H}  \co  ((p_{M}\co \phi_{M})\ot \mu_{H})\co (i_{M}\ot ((\mu_{H}\ot \mu_{H})\co \delta_{H\ot H})\ot \lambda_{H}) \co (i_{M^{co H}\times H}\ot \delta_{H}) $

\item[ ]$= p_{M^{co H}\times H}  \co  ((p_{M}\co \phi_{M}\co (i_{M}\ot \mu_{H}))\ot (\mu_{H}\co (H\ot \Pi_{H}^{L})))\co 
(M^{co H}\ot \delta_{H\ot H})\co  (i_{M^{co H}\times H}\ot H)$

\item[ ]$=  p_{M^{co H}\times H}  \co  ((p_{M}\co \phi_{M}\co (i_{M}\ot \mu_{H}))\ot H)\co (M^{co H}\ot H\ot c_{H,H})\co (M^{co H}\ot \delta_{H}\ot H)\co (i_{M^{co H}\times H}\ot H)$

\item[ ]$=  p_{M^{co H}\times H}  \co  ((p_{M}\co \phi_{M}\co (i_{M}\ot (\Pi_{H}^{L}\co \mu_{H})))\ot H)\co (M^{co H}\ot H\ot c_{H,H})\co (M^{co H}\ot \delta_{H}\ot H)$
\item[ ]$\hspace{0.38cm}\co (i_{M^{co H}\times H}\ot H)$

\item[ ]$=  p_{M^{co H}\times H}  \co  ((p_{M}\co \phi_{M}\co (i_{M}\ot \mu_{H}))\ot H)\co (M^{co H}\ot H\ot c_{H,H})\co (M^{co H}\ot \delta_{H}\ot H)\co (i_{M^{co H}\times H}\ot \Pi_{H}^{L})$

\item[ ]$=p_{M^{co H}\times H}  \co  ((p_{M}\co \phi_{M})\ot H)\co (i_{M}\ot (((\varepsilon_{H}\co \mu_{H})\ot H\ot H)\co (H\ot c_{H,H}\ot H) \co (\delta_{H}\ot c_{H,H}) \co (\delta_{H}\ot H)))$
\item[ ]$\hspace{0.38cm}\co (i_{M^{co H}\times H}\ot H)$

\item[ ]$=p_{M^{co H}\times H}  \co  ((p_{M}\co \phi_{M})\ot H)\co (i_{M}\ot (((\varepsilon_{H}\co \mu_{H})\ot \delta_{H})\co (H\ot c_{H,H})  \co (\delta_{H}\ot H)))\co (i_{M^{co H}\times H}\ot H)$

\item[ ]$=p_{M^{co H}\times H}  \co  ((p_{M}\co \phi_{M})\ot H)\co (i_{M}\ot (\delta_{H}\co \mu_{H}\co (H\ot \Pi_{H}^{L})))\co (i_{M^{co H}\times H}\ot H)$

\item[ ]$=p_{M^{co H}\times H}  \co \nabla_{M}\co (M^{co H} \ot (\mu_{H}\co (H\ot \Pi_{H}^{L}) ))\co (i_{M^{co H}\times H}\ot H)$

\item[ ]$= \phi_{M^{co H}\times H}  \co  (M^{co H}\times H\ot \Pi_{H}^{L})$

\end{itemize}

The first and tenth equalities follow by (\ref{tensor-idempotent-1}), the second one by (a1) of Definition \ref{Weak-Hopf-quasigroup} and the third one by the coassociativity of $\delta_{H}$ and (a4-6) of Definition \ref{Weak-Hopf-quasigroup}. In the fourth one we used (\ref{2-mu-delta-pi-l}). The fifth equality relies on (\ref{new-c5-2-3}) and the sixth one follows by (\ref{pi-delta-mu-pi-1}) and (\ref{new-c5-2-3}). The seventh one is a consequence  of the naturality of the braiding and (\ref{mu-pi-l}). The eighth equality follows by the naturality of the braiding and the coassociativity of $\delta_{H}$. Finally, the ninth equality follows by (\ref{mu-pi-l}) and  the last one relies on the properties of $\nabla_{M}$.

We continue in this fashion proving (c4) of Definition \ref{Hopf-module}. Indeed:

\begin{itemize}
\item[ ]$\hspace{0.38cm} \phi_{M^{co H}\times H}  \co (\phi_{M^{co H}\times H} \ot H)\co (M^{co H}\times H\ot \lambda_{H}\ot H)\co (M^{co H}\times H\ot \delta_{H})$

\item[ ]$=p_{M^{co H}\times H}  \co  ((p_{M}\co \phi_{M})\ot \mu_{H})\co (i_{M}\ot (\delta_{H}\co \mu_{H}\co (H\ot \lambda_{H}))\ot H)\co (i_{M^{co H}\times H}\ot \delta_{H})$

\item[ ]$=p_{M^{co H}\times H}  \co  ((p_{M}\co \phi_{M})\ot \mu_{H})\co (i_{M}\ot ((\mu_{H}\ot \mu_{H})\co \delta_{H\ot H}\co (H\ot \lambda_{H}))\ot H)\co (i_{M^{co H}\times H}\ot \delta_{H}) $

\item[ ]$= p_{M^{co H}\times H}  \co  ((p_{M}\co \phi_{M}\co (M\ot \Pi_{H}^{L}))\ot \mu_{H})\co (i_{M}\ot ((\mu_{H}\ot \mu_{H})\co \delta_{H\ot H}\co (H\ot \lambda_{H}))\ot H)\co (i_{M^{co H}\times H}\ot \delta_{H})$

\item[ ]$=p_{M^{co H}\times H}  \co  ((p_{M}\co \phi_{M})\ot \mu_{H})\co (i_{M}\ot [((\varepsilon_{H}\co \mu_{H})\ot (\varepsilon_{H}\co \mu_{H})\ot H\ot \mu_{H})\co (H\ot \delta_{H}\ot c_{H,H}\ot H\ot H) $
\item[ ]$\hspace{0.38cm}\co (H\ot c_{H,H}\ot c_{H,H}\ot H)\co ((\delta_{H}\co \eta_{H})\ot \delta_{H}\ot (\delta_{H}\co \lambda_{H}))]\ot H)\co (i_{M^{co H}\times H}\ot \delta_{H}) $

\item[ ]$=p_{M^{co H}\times H}  \co  ((p_{M}\co \phi_{M})\ot \mu_{H})\co 
(i_{M}\ot  (\Pi_{H}^{L}\ot ((\varepsilon_{H}\ot H)\co \delta_{H}\co \mu_{H}))\ot H)\co (M^{co H}\ot \delta_{H}\ot\lambda_{H}\ot H)$
\item[ ]$\hspace{0.38cm} \co (i_{M^{co H}\times H}\ot \delta_{H}) $  

\item[ ]$=p_{M^{co H}\times H}  \co  ((p_{M}\co \phi_{M})\ot \mu_{H})\co 
(i_{M}\ot   H\ot  \mu_{H}\ot H)\co (M^{co H}\ot \delta_{H}\ot\lambda_{H}\ot H)\co (i_{M^{co H}\times H}\ot \delta_{H})$

\item[ ]$=p_{M^{co H}\times H}  \co  ((p_{M}\co \phi_{M})\ot \mu_{H})\co 
(i_{M}\ot   \delta_{H}\ot  \Pi_{H}^{R})\co (i_{M^{co H}\times H}\ot H)  $

\item[ ]$= p_{M^{co H}\times H}  \co  (M^{co H}\ot \mu_{H})\co 
((\nabla_{M}\co i_{M^{co H}\times H})\ot \Pi_{H}^{R}) $

\item[ ]$= \phi_{M^{co H}\times H}  \co  (M^{co H}\times H\ot \Pi_{H}^{R})$

\end{itemize}

The first and eighth equalities follow by (\ref{tensor-idempotent-1}), the second one by (a1) of Definition \ref{Weak-Hopf-quasigroup} and the third one by (\ref{new-c5-2-3}). In the fourth one we used the naturality of the braiding and (a2) of Definition \ref{Weak-Hopf-quasigroup}. The fifth one is a consequence of the naturality of the braiding, the coassociativity of $\delta_{H}$ and (a1) of Definition \ref{Weak-Hopf-quasigroup}. The sixth equality follows from the counit properties and (\ref{new-c5-2-3}) and the seventh one by (a4-7) of Definition \ref{Weak-Hopf-quasigroup}. Finally, the last equality is a consequence of the properties of $\nabla_{M}$.

The only point remaining is (c5) of Definition \ref{Hopf-module}. This equality holds because:

\begin{itemize}
\item[ ]$\hspace{0.38cm} \phi_{M^{co H}\times H}  \co (\phi_{M^{co H}\times H} \ot H)\co (M^{co H}\times H\ot \Pi_{H}^{L}\ot H)\co (M^{co H}\times H\ot \delta_{H})$

\item[ ]$=p_{M^{co H}\times H}  \co  ((p_{M}\co \phi_{M})\ot \mu_{H})\co (i_{M}\ot (\delta_{H}\co \mu_{H}\co (H\ot \Pi_{H}^{L}))\ot H)\co (i_{M^{co H}\times H}\ot \delta_{H}) $

\item[ ]$=p_{M^{co H}\times H}  \co  ((p_{M}\co \phi_{M})\ot \mu_{H})\co (i_{M}\ot ((\mu_{H}\ot \mu_{H})\co \delta_{H\ot H}\co (H\ot \Pi_{H}^{L}))\ot H) \co (i_{M^{co H}\times H}\ot \delta_{H}) $

\item[ ]$=p_{M^{co H}\times H}  \co  ((p_{M}\co \phi_{M})\ot \mu_{H})\co (i_{M}\ot ((\mu_{H}\ot (\mu_{H}\co (H\ot \Pi_{H}^{L})))\co \delta_{H\ot H} \co (H\ot \Pi_{H}^{L}))\ot H)$
\item[ ]$\hspace{0.38cm}\co (i_{M^{co H}\times H}\ot \delta_{H}) $ 

\item[ ]$=p_{M^{co H}\times H}  \co  ((p_{M}\co \phi_{M})\ot H)\co (i_{M}\ot ( (\Pi_{H}^{L}\ot H)\co \mu_{H\ot H} \co (\delta_{H}\ot ((\Pi_{H}^{L}\ot H)\co \delta_{H}))))\co (i_{M^{co H}\times H}\ot H) $

\item[ ]$=p_{M^{co H}\times H}  \co  ((p_{M}\co \phi_{M})\ot H)\co (i_{M}\ot ( (\Pi_{H}^{L}\co \mu_{H}\co (H\ot \Pi_{H}^{L}))\ot \mu_{H})\co \delta_{H\ot H})\co (i_{M^{co H}\times H}\ot H) $  

\item[ ]$=p_{M^{co H}\times H}  \co  ((p_{M}\co \phi_{M})\ot H)\co (i_{M}\ot ( (\Pi_{H}^{L}\ot H)\co \delta_{H}\co \mu_{H})) \co (i_{M^{co H}\times H}\ot H) $  

\item[ ]$=p_{M^{co H}\times H}  \co  ((p_{M}\co \phi_{M})\ot H)\co (i_{M}\ot ( \delta_{H}\co \mu_{H})) \co (i_{M^{co H}\times H}\ot H) $ 

\item[ ]$= p_{M^{co H}\times H}  \co  \nabla_{M}\co (M^{co H}\ot \mu_{H})\co \co (i_{M^{co H}\times H}\ot H) $

\item[ ]$= \phi_{M^{co H}\times H}$

\end{itemize}

The first  equalitiy follows by (\ref{tensor-idempotent-1}), the second one by (a1) of Definition \ref{Weak-Hopf-quasigroup} and the third one by (\ref{pi-delta-mu-pi-3}). In the fourth one we used (\ref{2-mu-delta-pi-l}) as well as (\ref{new-c5-2-3}). The fifth one relies on the naturality of the braiding and the sixth one is a consequence of (\ref{pi-delta-mu-pi-1}) and (a1) of Definition \ref{Weak-Hopf-quasigroup}. The seventh one follows by (\ref{new-c5-2-3}) and  the eighth one by \ref{tensor-idempotent-1}. Finally, the last one follows by the properties of $\nabla_{M}$. 

\end{proof}

\begin{proposition}
\label{structure-times-2}
Let $H$ be a weak Hopf quasigroup, $(M, \phi_{M}, \rho_{M})$  be a right-right $H$-Hopf module and  $\alpha_{M}:M\rightarrow M^{co H}\times H$ be the isomorphism of right $H$-comodules defined in Remark \ref{Th-mod}.  Then, for the action $\phi_{M}^{\alpha_{M}}$ introduced  in Proposition \ref{isomorphism}, the triple $(M, \phi_{M}^{\alpha_{M}}, \rho_{M})$  is a right-right $H$-Hopf module  with the same object of coinvariants of  $(M, \phi_{M}, \rho_{M})$. Moreover, the identity 
\begin{equation}
\label{action-alpha}
\phi_{M}^{\alpha_{M}}=\phi_{M}\co (q_{M}\ot \mu_{H})\co (\rho_{M}\ot H)
\end{equation}
holds and $q_{M}^{\alpha_{M}}=q_{M}$ where $q_{M}^{\alpha_{M}}=\phi_{M}^{\alpha_{M}}\co (M\ot \lambda_{H})\co \rho_{M}$ is the idempotent morphism associated to the 
Hopf module $(M, \phi_{M}^{\alpha_{M}}, \rho_{M})$. Finally, if  for $(M, \phi_{M}^{\alpha_{M}}, \rho_{M})$, $\nabla_{M}^{\alpha_{M}}$ denotes the idempotent morphism defined in Proposition \ref{tensor-idempotent}, we have that 
\begin{equation}
\label{idemp-1-2}
\nabla_{M}^{\alpha_{M}}=\nabla_{M}
\end{equation} 
and then, for $(M, \phi_{M}^{\alpha_{M}}, \rho_{M})$, the associated isomorphism between $M$ and $M^{co H}\times H$ defined in Remark \ref{Th-mod} is $\alpha_{M}$. Finally, 
\begin{equation}
\label{phi-square}
(\phi_{M}^{\alpha_{M}})^{\alpha_{M}}=
\phi_{M}^{\alpha_{M}}
\end{equation}
holds.
\end{proposition}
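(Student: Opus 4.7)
My plan is to tackle the assertions in order. First, that $(M,\phi_{M}^{\alpha_{M}},\rho_{M})$ is a right-right $H$-Hopf module is an immediate consequence of Proposition \ref{isomorphism} applied to the $H$-comodule isomorphism $\alpha_{M}$ of Remark \ref{Th-mod} together with the Hopf module $(M^{co H}\times H,\phi_{M^{co H}\times H},\rho_{M^{co H}\ot H})$ constructed in Proposition \ref{structure-times}.

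For (\ref{action-alpha}) I would substitute $\alpha_{M}=p_{M^{co H}\ot H}\co \omega_{M}'$ and $\alpha_{M}^{-1}=\omega_{M}\co i_{M^{co H}\ot H}$ into $\phi_{M}^{\alpha_{M}}=\alpha_{M}^{-1}\co \phi_{M^{co H}\times H}\co (\alpha_{M}\ot H)$ and invoke two simplifications: first, $i_{M^{co H}\ot H}\co \alpha_{M}=\nabla_{M}\co \omega_{M}'=\omega_{M}'\co \omega_{M}\co \omega_{M}'=\omega_{M}'$, and second, $\omega_{M}\co \nabla_{M}=\omega_{M}$, both relying only on $\omega_{M}\co \omega_{M}'=id_{M}$. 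The tautology $(i_{M}\ot H)\co (p_{M}\ot \mu_{H})=q_{M}\ot \mu_{H}$ then produces the stated formula.

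With (\ref{action-alpha}) in hand, $q_{M}^{\alpha_{M}}=q_{M}$ follows from a short chain of rewrites. Expanding $q_{M}^{\alpha_{M}}=\phi_{M}^{\alpha_{M}}\co (M\ot \lambda_{H})\co \rho_{M}$ via (\ref{action-alpha}), coassociativity of $\rho_{M}$ and the identity $\Pi_{H}^{L}=\mu_{H}\co (H\ot \lambda_{H})\co \delta_{H}$ transform it into $\phi_{M}\co (q_{M}\ot \Pi_{H}^{L})\co \rho_{M}$. Applying (c3) of Definition \ref{Hopf-module} in the form $\phi_{M}\co (M\ot \Pi_{H}^{L})=\phi_{M}\co (\phi_{M}\ot \lambda_{H})\co (M\ot \delta_{H})$, then using coassociativity once more and rearranging, the inner subexpression $\phi_{M}\co (q_{M}\ot H)\co \rho_{M}=id_{M}$ (which is precisely (\ref{new-c5-2-1})) surfaces and collapses everything back to $\phi_{M}\co (M\ot \lambda_{H})\co \rho_{M}=q_{M}$. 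In particular, $M^{co H}$ and the morphisms $p_{M}, i_{M}$ are unchanged for the new Hopf module structure.

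The main technical obstacle is (\ref{idemp-1-2}). Using (\ref{tensor-idempotent-1}) for both idempotents and the identifications already established, the identity reduces to $p_{M}\co \phi_{M}^{\alpha_{M}}\co (i_{M}\ot H)=p_{M}\co \phi_{M}\co (i_{M}\ot H)$. I would expand the left-hand side via (\ref{action-alpha}), exploit the equalizer property $(M\ot \Pi_{H}^{L})\co \rho_{M}\co i_{M}=\rho_{M}\co i_{M}$ from Proposition \ref{idempotent}, and then use (\ref{new-c5-2-3}) together with condition (c5) of Definition \ref{Hopf-module} to absorb the resulting $\Pi_{H}^{L}$-insertion inside $\mu_{H}$; coordinating these rewrites so that both sides collapse to a common form of $p_{M}\co \phi_{M}\co (i_{M}\ot \Pi_{H}^{L})$ is the delicate point. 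Once (\ref{idemp-1-2}) is secured, the splittings $p_{M^{co H}\ot H}$ and $i_{M^{co H}\ot H}$ are intrinsic to $\nabla_{M}$ and thus agree with their $\alpha_{M}$-counterparts, whence $\alpha_{M}^{\alpha_{M}}=\alpha_{M}$, and Proposition \ref{structure-times} applied to $(M,\phi_{M}^{\alpha_{M}},\rho_{M})$ yields the same $\phi_{M^{co H}\times H}$; therefore $(\phi_{M}^{\alpha_{M}})^{\alpha_{M}}=\alpha_{M}^{-1}\co \phi_{M^{co H}\times H}\co (\alpha_{M}\ot H)=\phi_{M}^{\alpha_{M}}$, proving (\ref{phi-square}).
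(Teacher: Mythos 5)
Your proposal tracks the paper's proof almost step for step: the Hopf-module structure of $(M,\phi_M^{\alpha_M},\rho_M)$ via Proposition \ref{isomorphism}; formula (\ref{action-alpha}) obtained from $\omega_M\co\omega_M^{\prime}=id_M$ (your two simplifications are precisely the paper's identities $\phi_M\co(i_M\ot H)\co\nabla_M=\phi_M\co(i_M\ot H)$ and $\nabla_M\co(p_M\ot H)\co\rho_M=(p_M\ot H)\co\rho_M$); the chain $q_M^{\alpha_M}=\phi_M\co(q_M\ot\Pi_H^L)\co\rho_M$, then (c3), comodule coassociativity and (\ref{new-c5-2-1}) giving $q_M^{\alpha_M}=q_M$ (your explicit appeal to (c3) in fact makes the paper's own regrouping at this point more transparent); and the deduction of (\ref{phi-square}) from (\ref{idemp-1-2}). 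All of this is correct.

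The one place where you do not actually give a proof is (\ref{idemp-1-2}). Your reduction via (\ref{tensor-idempotent-1}) to the identity $p_M\co\phi_M^{\alpha_M}\co(i_M\ot H)=p_M\co\phi_M\co(i_M\ot H)$ is legitimate and the identity is true, but the route you sketch (equalizer property, (\ref{new-c5-2-3}) and (c5), ``coordinated'' so that both sides reach $p_M\co\phi_M\co(i_M\ot\Pi_H^L)$) is left unexecuted at exactly the point you flag as delicate. The ingredient you are missing is (\ref{tensor-idempotent-3}). Since $q_M=i_M\co p_M$ and, by (c2-1), $(p_M\ot H)\co\rho_M\co i_M=\nabla_M\co(M^{co H}\ot\eta_H)$, one has
\begin{equation*}
(q_M\ot\mu_H)\co((\rho_M\co i_M)\ot H)=(i_M\ot H)\co(M^{co H}\ot\mu_H)\co((\nabla_M\co(M^{co H}\ot\eta_H))\ot H)=(i_M\ot H)\co\nabla_M,
\end{equation*}
whence $\phi_M^{\alpha_M}\co(i_M\ot H)=\phi_M\co(i_M\ot H)\co\nabla_M=\phi_M\co(i_M\ot H)$ by (\ref{new-c5-2-1}); that is, $\omega_M^{\alpha_M}=\omega_M$ (this is the paper's argument, and it is stronger than the $p_M$-truncated identity you aim for). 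Combined with $\omega_M^{\alpha_M\prime}=\omega_M^{\prime}$, which follows from $q_M^{\alpha_M}=q_M$, this gives $\nabla_M^{\alpha_M}=\omega_M^{\prime}\co\omega_M=\nabla_M$ with no $\Pi_H^L$ bookkeeping at all. Replace your sketch of this step by that two-line computation and the rest of your argument stands.
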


\begin{proof}
By Proposition \ref{isomorphism} we obtain that  $(M, \phi_{M}^{\alpha_{M}}, \rho_{M})$  is a right-right $H$-Hopf module and by the equalizer diagram of Proposition \ref{idempotent} the object of coinvariants of $(M, \phi_{M}^{\alpha_{M}}, \rho_{M})$ is equal to the one of $(M, \phi_{M}, \rho_{M})$. Also, by (\ref{new-c5-2-1}) we have 
\begin{equation}
\label{nabla-i-phi}
\phi_{M}\co (i_{M}\ot H)\co \nabla_{M}=\phi_{M}\co (i_{M}\ot H)
\end{equation}
and 
\begin{equation}
\label{rho-p-phi}
 \nabla_{M}\co (p_{M}\ot H)\co \rho_{M}=(p_{M}\ot H)\co \rho_{M}.
\end{equation}
Then, (\ref{action-alpha}) holds because 
$$\phi_{M}^{\alpha_{M}}=\alpha_{M}^{-1}\co \phi_{M^{co H}\times H}\co (\alpha_{M}\ot H)$$
$$=
\phi_{M}\co (i_{M}\ot H)\co \nabla_{M}\co (M^{co H}\ot \mu_{H})\co ((\nabla_{M}\co (p_{M}\ot H)\co \rho_{M})\ot H)$$
$$=\phi_{M}\co (q_{M}\ot \mu_{H})\co (\rho_{M}\ot H).$$

On the other hand, by (\ref{action-alpha}), the coassociativity of $\delta_{H}$, (c1),  (c4) of Definition \ref{Hopf-module} and (\ref{new-c5}) we obtain:

\begin{itemize}
\item[ ]$\hspace{0.38cm} q_{M}^{\alpha_{M}}$

\item[ ]$=\phi_{M}\co (q_{M}\ot \mu_{H})\co (\rho_{M}\ot \lambda_{H}) \co \rho_{M}$

\item[ ]$= \phi_{M}\co (\phi_{M}\ot H)\co ((((\phi_{M}\co (M\ot \lambda_{H}))\ot H)\co (M\ot \delta_{H}))\ot \lambda_{H})\co (M\ot \delta_{H})\co \rho_{M} $

\item[ ]$=\phi_{M}\co ((\phi_{M}\co (M\ot \Pi_{H}^{R}))\ot \lambda_{H})\co (M\ot \delta_{H})\co \rho_{M} $

\item[ ]$=\phi_{M}\co ((\phi_{M}\co (M\ot \Pi_{H}^{R})\co \rho_{M})\ot \lambda_{H})\co \rho_{M} $

\item[ ]$= q_{M}.$
\end{itemize}

Then, $i_{M}=i_{M}^{\alpha_{M}}$ and $p_{M}=p_{M}^{\alpha_{M}}$ and, as a consequence, $\omega_{M}^{\alpha_{M}\prime}=(p_{M}\ot H)\co \rho_{M}=\omega_{M}^{\prime}.$ Moreover, by (c2-1) of Definition \ref{Hopf-module}, (\ref{tensor-idempotent-3}) and (\ref{nabla-i-phi})
\begin{itemize}
\item[ ]$\hspace{0.38cm}\omega_{M}^{\alpha_{M}} $

\item[ ]$=\phi_{M}^{\alpha_{M}}\co (i_{M}\ot H) $

\item[ ]$=\phi_{M}\co (q_{M}\ot \mu_{H})\co ((\rho_{M}\co i_{M}) \ot H)$

\item[ ]$=\phi_{M}\co (i_{M}\ot \mu_{H})\co (\nabla_{M}\ot  H)\co (M^{co H}\ot \eta_{H}\ot H)  $

\item[ ]$= \phi_{M}\co (i_{M}\ot H)\co \nabla_{M}$

\item[ ]$= \omega_{M}.$
\end{itemize}
 Therefore, $\nabla_{M}^{\alpha_{M}}=\nabla_{M}$ and then, for $(M, \phi_{M}^{\alpha_{M}}, \rho_{M})$, the associated isomorphism between $M$ and $M^{co H}\times H$ is $\alpha_{M}$.

Finally, by (\ref{tensor-idempotent-3})
$$(\phi_{M}^{\alpha_{M}})^{\alpha_{M}}=\phi_{M}\co (q_{M}\ot \mu_{H})\co ((\rho_{M}\co q_{M})\ot \mu_{H})\co 
(\rho_{M}\ot H)$$
$$=\phi_{M}\co (i_{M}\ot \mu_{H})\co ((\nabla_{M}\co (p_{M}\ot \eta_{H}))\ot \mu_{H})\co (\rho_{M}\ot H)$$
$$=\phi_{M}\co (i_{M}\ot H)\co\nabla_{M}\co (p_{M}\ot \mu_{H})\co (\rho_{M}\ot H)=\phi_{M}^{\alpha_{M}}$$
and (\ref{phi-square}) holds.
\end{proof}

\begin{remark}
Let $H$ be a weak Hopf quasigroup. The triple $(H, \phi_{H}=\mu_{H}, \rho_{H}=\delta_{H})$ is a right-right $H$-Hopf module and $\phi_{H}^{\alpha_{H}}=\phi_{H}$ because by (\ref{mu-assoc-2}), 
$$\phi_{H}^{\alpha_{H}}=\mu_{H}\co (\Pi_{H}^{L}\ot \mu_{H})\co (\delta_{H}\ot H)=\mu_{H}=\phi_{H}.$$

\end{remark}

\begin{definition}
\label{category}
Let $H$ be a weak Hopf quasigroup and let $(M, \phi_{M}, \rho_{M})$ and  $(N, \phi_{N}, \rho_{N})$ be right-right 
 $H$-Hopf modules. A morphism in ${\mathcal C}$ $f:M\rightarrow N$ is said to be $H$-quasilineal if the following identity holds
\begin{equation}
\label{quasilineal}
\phi_{N}^{\alpha_{N}}\co (f\ot H)=f\co \phi_{M}^{\alpha_{M}}.
\end{equation}
A morphism of right-right $H$-Hopf modules between $M$ and $N$ is a morphism $f:M\rightarrow N$ in ${\mathcal C}$ such that is both a morphism of right $H$-comodules and $H$-quasilineal. The collection of all right $H$-Hopf modules with their morphisms forms a category  which will be  denoted by ${\mathcal M}^{H}_{H}$. 

\end{definition}

\begin{proposition} 
\label{strong-hoof}
Let $H$ be a weak Hopf quasigroup and let $(M, \phi_{M}, \rho_{M})$ be an object in ${\mathcal M}^{H}_{H}$. Then, for   $(M^{co H}\times H, \phi_{M^{co H}\times H}, \rho_{M^{co H}\times H})$ the  identity 
\begin{equation}
\label{quasilineal-1}
\phi_{M^{co H}\times H}^{\alpha_{M^{co H}\times H}}=\phi_{M^{co H}\times H}
\end{equation}
holds.
\end{proposition}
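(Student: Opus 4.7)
The plan is to leverage the isomorphism $\alpha_M: M \to M^{co H}\times H$ together with the idempotency relation (\ref{phi-square}) already established in Proposition \ref{structure-times-2}. For brevity write $N=M^{co H}\times H$ with Hopf module structure $(\phi_N,\rho_N)$. By the very definition of $\phi_M^{\alpha_M}$ one has $\phi_N\co(\alpha_M\ot H)=\alpha_M\co\phi_M^{\alpha_M}$, and since $\alpha_M$ is already a right $H$-comodule iso, $\alpha_M$ is in fact an isomorphism of right-right $H$-Hopf modules between $(M,\phi_M^{\alpha_M},\rho_M)$ and $(N,\phi_N,\rho_N)$. The strategy is then to transport the identity $(\phi_M^{\alpha_M})^{\alpha_M}=\phi_M^{\alpha_M}$ across $\alpha_M$ to obtain $\phi_N^{\alpha_N}=\phi_N$.

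First I would verify the intertwining relation $q_N\co\alpha_M=\alpha_M\co q_M$. Starting from $q_N=\phi_N\co(N\ot\lambda_H)\co\rho_N$, use that $\alpha_M$ is a comodule map to rewrite $\rho_N\co\alpha_M=(\alpha_M\ot H)\co\rho_M$, then apply the action intertwining to replace $\phi_N\co(\alpha_M\ot H)$ by $\alpha_M\co\phi_M^{\alpha_M}$, obtaining $q_N\co\alpha_M=\alpha_M\co q_M^{\alpha_M}$; by Proposition \ref{structure-times-2} we have $q_M^{\alpha_M}=q_M$, so the claim follows.

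Next, apply formula (\ref{action-alpha}) to $N$ (it is a direct consequence of the definition of $\phi^{\alpha}$ and holds for every Hopf module), getting
\begin{equation*}
\phi_N^{\alpha_N}=\phi_N\co(q_N\ot\mu_H)\co(\rho_N\ot H).
\end{equation*}
Pre-composing with $\alpha_M\ot H$, pushing $\alpha_M$ through $\rho_N$ by the comodule property, then through $q_N\ot\mu_H$ via the intertwining $q_N\co\alpha_M=\alpha_M\co q_M$, and finally absorbing $\phi_N\co(\alpha_M\ot H)=\alpha_M\co\phi_M^{\alpha_M}$, I arrive at
\begin{equation*}
\phi_N^{\alpha_N}\co(\alpha_M\ot H)=\alpha_M\co\phi_M^{\alpha_M}\co(q_M\ot\mu_H)\co(\rho_M\ot H).
\end{equation*}
Using (\ref{action-alpha}) for $(M,\phi_M^{\alpha_M},\rho_M)$ and once more the equality $q_M^{\alpha_M}=q_M$, the right-hand side equals $\alpha_M\co(\phi_M^{\alpha_M})^{\alpha_M}$, which by (\ref{phi-square}) is $\alpha_M\co\phi_M^{\alpha_M}=\phi_N\co(\alpha_M\ot H)$. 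Cancelling the isomorphism $\alpha_M\ot H$ yields $\phi_N^{\alpha_N}=\phi_N$. The main obstacle here is purely one of bookkeeping: keeping straight which $\phi$ and which $q$ live on $M$ versus $N$, and remembering that the $\alpha$-twist construction is natural with respect to Hopf module isomorphisms, so that the substantive content already sits in (\ref{phi-square}).
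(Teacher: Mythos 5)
Your argument is correct, but it takes a genuinely different route from the paper's. The paper proves (\ref{quasilineal-1}) by direct computation: it first derives $q_{M^{co H}\times H}=p_{M^{co H}\ot H}\co (M^{co H}\ot \Pi_{H}^{L})\co i_{M^{co H}\ot H}$ from (\ref{tensor-idempotent-2}), substitutes this into (\ref{action-alpha}) to get the explicit expression (\ref{new-q-2}), and then runs a ten-step reduction using (c3), (\ref{new-c5-2-2}), (c1), (\ref{tensor-idempotent-3}) and the properties of $\nabla_{M}$. You instead observe that the statement is a formal consequence of (\ref{phi-square}): since (\ref{action-alpha}) expresses $\phi^{\alpha}$ intrinsically as $\phi\co(q\ot\mu_{H})\co(\rho\ot H)$ for an arbitrary Hopf module, and since $\alpha_{M}$ intertwines the coactions (comodule isomorphism), the idempotents (via $q_{N}\co\alpha_{M}=\alpha_{M}\co q_{M}^{\alpha_{M}}=\alpha_{M}\co q_{M}$, using $q_{M}^{\alpha_{M}}=q_{M}$ from Proposition \ref{structure-times-2}) and the actions ($\phi_{N}\co(\alpha_{M}\ot H)=\alpha_{M}\co\phi_{M}^{\alpha_{M}}$, by definition of $\phi_{M}^{\alpha_{M}}$), the identity $\phi_{N}^{\alpha_{N}}=\phi_{N}$ transports from $(\phi_{M}^{\alpha_{M}})^{\alpha_{M}}=\phi_{M}^{\alpha_{M}}$ after cancelling the isomorphism $\alpha_{M}\ot H$. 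All the ingredients are already established in Proposition \ref{structure-times-2}, whose proof does not rely on the present statement, so there is no circularity; your approach buys brevity and makes the naturality of the $\alpha$-twist explicit, while the paper's computation produces intermediate formulas such as (\ref{new-q}) directly. Two minor cautions: (\ref{action-alpha}) is not quite "a direct consequence of the definition" --- its proof uses (\ref{nabla-i-phi}) and (\ref{rho-p-phi}) --- but those hold for every right-right $H$-Hopf module, so your appeal to it for $N$ is legitimate; and calling $\alpha_{M}$ an isomorphism of right-right $H$-Hopf modules is premature relative to Definition \ref{category}, which requires the quasilinearity condition (\ref{quasilineal}) with twisted actions on both sides (that is precisely the content of the Fundamental Theorem), whereas what you actually use is only the plain intertwining relation, which is valid by definition.
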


\begin{proof} First note that, by (\ref{tensor-idempotent-2}) we have 
\begin{equation}
\label{new-q}
q_{M^{co H}\times H}=p_{M^{co H}\ot H}\circ (M^{co H}\ot  \Pi_{H}^{L})\co i_{M^{co H}\ot H}
\end{equation}
and, as a consequence, by (\ref{action-alpha}), the equality 
\begin{equation}
\label{new-q-2}
\phi_{M^{co H}\times H}^{\alpha_{M^{co H}\times H}}=p_{M^{co H}\ot H}\circ (M^{co H}\ot \mu_{H})\co 
((\nabla_{M}\co (M^{co H}\ot \Pi_{H}^{L}))\ot \mu_{H})\co (M^{co H}\ot \delta_{H}\ot H)\co (i_{M^{co H}\ot H}\ot H).
\end{equation}
holds.

Then, 

\begin{itemize}
\item[ ]$\hspace{0.38cm}\phi_{M^{co H}\times H}^{\alpha_{M^{co H}\times H}} $

\item[ ]$= p_{M^{co H}\ot H}\co (p_{M}\ot \mu_{H})\co ((\rho_{M}\co \phi_{M}\co (i_{M}\ot \Pi_{H}^{L}))\ot \mu_{H})\co (M^{co H}\ot \delta_{H}\ot H)\co  (i_{M^{co H}\ot H}\ot H)$

\item[ ]$= p_{M^{co H}\ot H}\co (p_{M}\ot \mu_{H})\co ((\rho_{M}\co \phi_{M}\co (\phi_{M}\ot \lambda_{H})\co (i_{M}\ot \delta_{H}))\ot \mu_{H})\co (M^{co H}\ot \delta_{H}\ot H)\co  (i_{M^{co H}\ot H}\ot H)$

\item[ ]$= p_{M^{co H}\ot H}\co (p_{M}\ot \mu_{H})\co ((\rho_{M}\co \phi_{M}\co (\phi_{M}\ot \lambda_{H}))\ot \mu_{H})\co (i_{M}\ot ((H\ot \delta_{H})\co \delta_{H})\ot H)\co  (i_{M^{co H}\ot H}\ot H)$

\item[ ]$= p_{M^{co H}\ot H}\co (p_{M}\ot \mu_{H})\co ((\rho_{M}\co \phi_{M}\co (M\ot \lambda_{H}))\ot \mu_{H})\co ( ((M\ot \delta_{H})\co \rho_{M}\co \phi_{M}\co (i_{M}\ot H)\co  i_{M^{co H}\ot H})\ot H)$

\item[ ]$= p_{M^{co H}\ot H}\co (p_{M}\ot \mu_{H})\co ((\rho_{M}\co q_{M})\ot \mu_{H})\co ( ( \rho_{M}\co \phi_{M}\co (i_{M}\ot H)\co  i_{M^{co H}\ot H})\ot H)$

\item[ ]$= p_{M^{co H}\ot H}\co (p_{M}\ot \mu_{H})\co ((\rho_{M}\co i_{M})\ot \mu_{H})\co ((\nabla_{M}\co i_{M^{co H}\ot H})\ot H)   $

\item[ ]$= p_{M^{co H}\ot H}\co (p_{M}\ot \mu_{H})\co ((\rho_{M}\co i_{M})\ot \mu_{H})\co ( i_{M^{co H}\ot H}\ot H)   $

\item[ ]$=p_{M^{co H}\ot H}\co (M^{co H}\ot \mu_{H})\co  ((\nabla_{M}\co (M^{co H}\ot \eta_{H}))\ot \mu_{H})\co ( i_{M^{co H}\ot H}\ot H) $

\item[ ]$=p_{M^{co H}\ot H}\co \nabla_{M}\co (M^{co H}\ot \mu_{H})\co  ( i_{M^{co H}\ot H}\ot H)  $

\item[ ]$=  \phi_{M^{co H}\times H}$

\end{itemize}

where the first equality follows by (\ref{new-q-2}) and the definition of $\nabla_{M}$, the second one by (c3) of Definition \ref{Hopf-module}, the third one by the coassociativity of $\delta_{H}$ and the fourth one by (\ref{new-c5-2-2}). In the fifth equality  we used  (c1) of Definition \ref{Hopf-module} and the sixth and eighth ones are consequence of the definition of $\nabla_{M}$. Finally, the seventh and the tenth one rely on the properties of $\nabla_{M}$ and the ninth one follows by 
(\ref{tensor-idempotent-3}).

\end{proof}

\begin{theorem} (Fundamental Theorem of Hopf modules)
Let $H$ be a weak Hopf quasigroup and let $(M, \phi_{M}, \rho_{M})$ be an object in ${\mathcal M}^{H}_{H}$. Then, the  right-right $H$-Hopf modules $(M, \phi_{M}, \rho_{M})$ and $(M^{co H}\times H, \phi_{M^{co H}\times H}, \rho_{M^{co H}\times H})$ are isomorphic in ${\mathcal M}^{H}_{H}$.

\end{theorem}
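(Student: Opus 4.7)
The plan is to show that the comodule isomorphism $\alpha_{M}:M\rightarrow M^{co H}\times H$ already constructed in Remark \ref{Th-mod} upgrades to an isomorphism in the category ${\mathcal M}^{H}_{H}$. By Remark \ref{Th-mod}, $\alpha_{M}=p_{M^{co H}\ot H}\co \omega_{M}^{\prime}$ is an isomorphism of right $H$-comodules with inverse $\alpha_{M}^{-1}=\omega_{M}\co i_{M^{co H}\ot H}$, so the comodule half of the isomorphism requirement is already settled. What remains is to verify that $\alpha_{M}$ satisfies the $H$-quasilineality condition
$$\phi_{M^{co H}\times H}^{\alpha_{M^{co H}\times H}}\co (\alpha_{M}\ot H)=\alpha_{M}\co \phi_{M}^{\alpha_{M}}$$
demanded by Definition \ref{category}.

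To check quasilineality I would simply assemble the pieces. By Proposition \ref{isomorphism} applied to $\alpha_{M}$, we have by definition
$$\phi_{M}^{\alpha_{M}}=\alpha_{M}^{-1}\co \phi_{M^{co H}\times H}\co (\alpha_{M}\ot H),$$
so composing with $\alpha_{M}$ on the left yields $\alpha_{M}\co \phi_{M}^{\alpha_{M}}=\phi_{M^{co H}\times H}\co (\alpha_{M}\ot H)$. On the other side, Proposition \ref{strong-hoof} gives precisely $\phi_{M^{co H}\times H}^{\alpha_{M^{co H}\times H}}=\phi_{M^{co H}\times H}$, so the right hand side of the quasilineality condition also equals $\phi_{M^{co H}\times H}\co (\alpha_{M}\ot H)$. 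The two expressions agree, and $\alpha_{M}$ is $H$-quasilineal. Combined with the comodule property, this exhibits $\alpha_{M}$ as an isomorphism in ${\mathcal M}^{H}_{H}$ and completes the argument.

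The main conceptual work has therefore already been absorbed into Proposition \ref{strong-hoof}, which is the only nonformal ingredient: the idempotent $\nabla_{M}$ introduced in Proposition \ref{tensor-idempotent} interacts with the target projector $\Pi_{H}^{L}$ in just the right way so that replacing the action $\phi_{M^{co H}\times H}$ by its $\alpha$-conjugate on the already ``coinvariants-times-$H$''-shaped object returns the same morphism. Given that fact, the final theorem is essentially a tautology obtained by rearranging the definition of $\phi_{M}^{\alpha_{M}}$ coming from Proposition \ref{isomorphism}, so no further nontrivial computation is expected; the short proof would consist of the two displayed equalities above followed by the observation that $\alpha_{M}$ is already a comodule iso.
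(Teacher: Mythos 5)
Your proposal is correct and follows essentially the same route as the paper: the comodule isomorphism $\alpha_{M}$ from Remark \ref{Th-mod} is upgraded to a morphism in ${\mathcal M}^{H}_{H}$ by combining Proposition \ref{strong-hoof} (which gives $\phi_{M^{co H}\times H}^{\alpha_{M^{co H}\times H}}=\phi_{M^{co H}\times H}$) with the defining identity $\phi_{M}^{\alpha_{M}}=\alpha_{M}^{-1}\co \phi_{M^{co H}\times H}\co (\alpha_{M}\ot H)$ from Propositions \ref{isomorphism} and \ref{structure-times-2}. The paper merely writes the middle term out explicitly via $\nabla_{M}$, (\ref{nabla-i-phi}) and (\ref{rho-p-phi}), so your two displayed equalities capture exactly its argument.
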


\begin{proof}
By Remark \ref{Th-mod} $\alpha_{M}=p_{M^{co H}\ot H}\circ \omega^{\prime}_{M}$ is an
isomorphism of right $H$-comodules with inverse $\alpha^{-1}_{M}=\omega_{M}\circ i_{M^{co H}\ot H}$.  To finish the proof we only need to show that (\ref{quasilineal}) holds.  Indeed, by (\ref{quasilineal-1}), (\ref{nabla-i-phi}) and (\ref{rho-p-phi}) we have 
$$\alpha_{M}^{-1}\co \phi_{M^{co H}\times H}^{\alpha_{M^{co H}\times H}}\co (\alpha_{M}\ot H)
=\phi_{M}\co 
(i_{M}\ot H)\co \nabla_{M}\co (M^{co H}\ot \mu_{H})\co ((\nabla_{M}\co (p_{M}\ot H)\co \rho_{M})\ot H)=\phi_{M}^{\alpha_{M}}.$$
\end{proof}

\section*{Acknowledgements}
The  authors were supported by  Ministerio de Econom\'{\i}a y Competitividad and by Feder founds. Project MTM2013-43687-P: Homolog\'{\i}a, homotop\'{\i}a e invariantes categ\'oricos en grupos y \'algebras no asociativas.

\end{document}